%
%
%
%
%

\documentclass{birkart}
\usepackage{amsmath,amssymb,amsthm,amscd}
\def\nt{\noindent}
\newtheorem{thm}{Theorem}[section]
\newtheorem{cor}[thm]{Corollary}
\newtheorem{lem}[thm]{Lemma}
\newtheorem{prop}[thm]{Proposition}
\theoremstyle{definition}
\newtheorem{defn}[thm]{Definition}
\theoremstyle{remark}
\newtheorem{rem}[thm]{Remark}
\newtheorem{ex}{Example}
\numberwithin{equation}{section}
\newcommand{\R}{\mathbb{R}}

\newcommand{\N}{\mathbb{N}}
\newcommand{\M}{\mathbb{M}}
\newcommand{\Z}{\mathbb{Z}}
\newcommand{\C}{\mathbb{C}}

\newcommand{\Ocal}{\mathcal{O}}
\newcommand{\Kcal}{\mathcal{K}}
\newcommand{\Acal}{\mathcal{A}}
\newcommand{\Scal}{\mathcal{S}}
\newcommand{\Ecal}{\mathcal{E}}
\newcommand{\Lcal}{\mathcal{L}}
\newcommand{\Hcal}{\mathcal{H}}

\newcommand{\Wcal}{\mathcal{W}}

\newcommand{\dbar}{{d\hspace{-0,05cm}\bar{}\hspace{0,05cm}}}
\newcommand{\Op}{\textup{Op}}

\newcommand{\op}{\textup{op}}

\newcommand{\td}{\tilde}

\newcommand{\beq}{\begin{equation}}
\newcommand{\eeq}{\end{equation}}
\newcommand{\virg}[1]{`{#1}'}

\begin{document}
%
\title[Asymptotic Parametrices of Elliptic Edge Operators]
{Asymptotic Parametrices of Elliptic Edge Operators}

\author{H.-J. Flad}
\address{Institute of Mathematics \\
Technical University Berlin\\
Strasse des 17. Juni 136\\
D-10623 Berlin\\
Germany}
\email{flad@mail.math.tu-berlin.de}

\author{G. Harutyunyan}
\address{Institute of Mathematics\\
Carl von Ossietzky University\\
Carl-von-Ossietzky-Str. 9-11\\
D-26129 Oldenburg\\
Germany}
\email{harutyunyan@mathematik.uni-oldenburg.de}

\author[B.-W. Schulze]{B.-W. Schulze}
\address{Institute of Mathematics, University of Potsdam, Am Neuen Palais 10\\
D-14469 Potsdam\\
Germany}
\email{schulze@math.uni-potsdam.de}

\subjclass{Primary 35S35; Secondary 35J70}

\keywords{cone and edge pseudo-differential operators, ellipticity of edge-degenerate operators, \\ 
meromorphic operator-valued symbols, asymptotics of solutions}

\date{}
  
\today


\begin{abstract}
We study operators on a singular manifold, here of conical or edge type, and develop a new general approach of representing asymptotics of solutions to elliptic equations close to the singularities. The idea is to construct so-called asymptotic parametrices with flat left-over terms. Our structures are motivated by models of particle physics with singular potentials that contribute embedded singularities in $\R^N$ of higher order, according to the number of particles.  

\end{abstract}

\maketitle
\tableofcontents
%
%
%
%
\section*{Introduction}
Ellipticity of an operator $A$ on a singular configuration $M,$ here with a stratification $s(M):=(s_j(M))_{j=0,\dots,k}$, cf. the definition below, relies on a principal symbolic structure $\sigma (A):=(\sigma _j(A))_{j=0,\dots,k},$ more precisely, on a componentwise invertibility (or Fredholm) condition, cf. \cite{Schu57}. In the simplest case when $M$ is smooth (which corresponds to $k=0$) and if $A$ is a classical pseudo-differential operator of order $\mu \in \R,$ e.g., a differential operator with smooth coefficients,  then $\sigma _0(A)$ is the standard homogeneous principal symbol of order $\mu .$ The case $k=1$ corresponds to conical or edge singularities. Those form a subset $s_1(M)\subset M$, and we set $s_0(M):=M\setminus s_1(M);$ both $s_0(M)$ and $s_1(M)$ are smooth manifolds. Now $\sigma _0(A)$ is the principal symbol of the operator $A$ on $s_0(M),$ while $\sigma _1(A)$ is an operator-valued symbol associated with $s_1(M).$ \\

\nt In the case of conical singularities, i.e. $\textup{dim}\, s_1(M)=0,$ without loss of generality we assume that  $s_1(M)$ consists of a single point $v$. Then $M$ is locally near $v$ modelled on a cone
\beq\label{int.cone}
X^\Delta :=(\overline{\R}_+\times X)/(\{0\}\times X)
\eeq
for a closed compact $C^\infty $ manifold $X.$ Let $\textup{Diff}^\mu (\cdot)$ be the space of differential operators of order $\mu $ on the smooth manifold in parentheses, with smooth coefficients (in its natural Fr\'echet topology). Operators on $M$ close to $v$ are expressed in the splitting of variables $(r,x)\in \R_+\times X=:X^\wedge.$ The space $\textup{Diff}_{\textup{deg}}^\mu (M)$ is then defined to be the set of all $A\in \textup{Diff}^\mu (M\setminus \{v\})$ that are locally near $v$ in the splitting of variables $(r,x)$ of the form  
\beq\label{i.degc}
A=r^{-\mu }\sum_{j=0}^\mu a_j(r)(-r\partial _r)^j
\eeq
for coefficients $a_j\in C^\infty (\overline{\R}_+,\textup{Diff}^{\mu -j}(X)).$ The first component of the pair $\sigma (A)=(\sigma _0(A),\sigma _1(A))$ of principal symbols is the standard homogeneous principal symbol (the interior symbol) of $A$ as an operator in $\textup{Diff}^\mu (M\setminus \{v\}),$ and $\sigma _1(A),$ the conormal symbol of $A,$ is defined as the $z$-dependent operator family
\beq\label{int.conc}
\sigma _1(A))(z)=\sum_{j=0}^\mu a_j(0)z^j:H^s(X)\rightarrow H^{s-\mu }(X)
\eeq
between standard Sobolev spaces on $X.$ The ellipticity of $A$ with respect to $\sigma (A)=(\sigma _0(A),\sigma _1(A))$ for a fixed weight $\gamma \in \R$ is defined as follows: The interior symbol $\sigma _0(A)$ never vanishes on $T^*(s_0(M))\setminus 0,$ and locally near $v$ in the variables $(r,x)\in \R_+\times X$ with the covariables $(\varrho ,\xi )$ we have 
\beq\label{i.E.red}
\tilde{\sigma }_0(A)(r,x,\varrho ,\xi ):=r^\mu \sigma _0(A)(r,x,r^{-1}\varrho ,\xi )\neq 0
\eeq
for $(\varrho ,\xi)\neq 0,$ up to $r=0;$ moreover, \eqref{int.conc} is bijective for all $z\in \Gamma _{(n+1)/2-\gamma }$ and any $s\in \R.$ Here $n:=\textup{dim}\,X,$ and
\beq\label{int.cwei}
\Gamma _\beta :=\{z\in \C:\textup{Re}\,z=\beta \}. 
\eeq
The function $\tilde{\sigma }_0(A)$ is also referred to as the rescaled interior symbol of $A.$
As is well-known if $A$ is elliptic with respect to $\sigma _0$ the space $L^{-\mu }_{\textup{cl}}(s_0(M))$ of classical pseudo-differential operators of order $-\mu $
contains elements $P$ such that $G:=PA-1$ and $C:=AP-1$ have kernels in $C^\infty (s_0(M)\times s_0(M)).$ Moreover, under the ellipticity conditions with respect to 
$(\sigma _0(A),\sigma _1(A))$ we find $P$ in the cone algebra, cf. \cite{Schu2} and Section 1.2 below. In this case the left-over terms are so-called Green operators with discrete asymptotics. From $P$ we may derive asymptotics of solutions, cf. notation in Section 1.2. We also obtain regularity in weighted Sobolev spaces over $s_0(M).$ In this article we construct asymptotic parametrices; those deliver the asymptotic information by a successive process, cf. Section 1.3. The advantage is that the computation of asymptotics becomes more transparent and manageable, cf. \cite{Flad3}. Analogous constructions are useful for edge singularities and singularities of higher order. The edge case will be studied in Section 2. Higher order singularities occur in many-particle systems. Corresponding asymptotic parametrices are interesting as well; the construction is planned in a forthcoming paper. In any case, the techniques for corner and edge singularities are crucial for the iterative concept for treating higher order singularities. It turns out that it is essential to interpret the (degenerate) operators in question as elements of operator algebras, in particular, to consider compositions. Then it is necessary to admit from the very beginning also higher order operators (and of negative order when we express parametrices of differential operators), even if we are finally interested in applications in some specific equation, say, of second order. Some elements of the pseudo-differential machinery will be formulated below, but the nature of edge degeneration and of the principal symbolic structure can be illustrated for differential operators.
A manifold $M$ with smooth edge $s_1(M)$ of dimension $q>0$ is locally near any point of the edge modelled on 
\beq\label{modeg}
X^\Delta \times \Omega
\eeq 
where $\Omega \subseteq \R^q$ corresponds to a chart on $s_1(M)$ with local coordinates $y,$ and $X$ is a closed compact $C^\infty $ manifold.
Then $\textup{Diff}^\mu _{\textup{deg}}(M)$ for a manifold $M$ with smooth edge $Y$ denotes the space of those $A\in \textup{Diff}^\mu (s_0(M))$ that are locally close to the edge in the splitting of variables $(r,x,y)\in X^\wedge\times \Omega $ of the form
\beq\label{iedeg}
A=r^{-\mu }\sum_{j+|\alpha  |\leq \mu } a_{j\alpha }(r,y)(-r\partial _r)^j(rD_y)^\alpha 
\eeq
for coefficients $a_{j\alpha }\in C^\infty (\overline{\R}_+\times \Omega ,\textup{Diff}^{\mu -(j+|\alpha |)}(X)),$ see the article \cite{Schu32}, or the monographs \cite{Schu2}, \cite{Egor1}, \cite{Schu20}. Parallel to the stratification $M=s_0(M)\cup s_1(M)$ as a disjoint union of smooth manifolds 
the operators $\textup{Diff}^\mu _{\textup{deg}}(M)$ have a principal symbolic structure $\sigma (A)=(\sigma _0(A),\sigma _1(A))$ where $\sigma _0(A)\in C^\infty (T^*(s_0(M))\setminus 0)$ is the standard homogeneous principal symbol of $A$ as an operator of $\textup{Diff}^\mu (s_0(M)),$ while 
\beq\label{isyeg}
\sigma _1(A)(y,\eta ):=r^{-\mu }\sum_{j+|\alpha  |\leq \mu } a_{j\alpha }(0,y)(-r\partial _r)^j\newline (r\eta )^\alpha :\Kcal^{s,\gamma }(X^\wedge)\rightarrow \Kcal^{s-\mu ,\gamma -\mu }(X^\wedge)
\eeq
 is operator-valued, acting as a family of operators on the open stretched cone $X^\wedge,$ parametrised by $(y,\eta )\in T^*(s_1(M))\setminus  0$ (the cotangent bundle of $s_1(M)$ minus the zero section). The definition of the weighted ``Kegel"-spaces $\Kcal^{s,\gamma }(X^\wedge)$ will be repeated below, cf. Section 2.4. The ellipticity of $A$ with respect to $\sigma (A)=(\sigma _0(A),\sigma _1(A))$ for a fixed weight $\gamma \in \R$ is defined as follows: The interior symbol $\sigma _0(A)$ never vanishes on $T^*( s_0(M))\setminus 0,$ and locally near $s_1(M)$ in the variables $(r,x,y)\in \R_+\times X\times \Omega $ with the covariables $(\varrho ,\xi ,\eta )$ the rescaled symbol 
\beq\label{i.E.redy}
\tilde{\sigma }_0(A)(r,x,y,\varrho ,\xi ,\eta ):=r^\mu \sigma _0(A)(r,x,y,r^{-1}\varrho ,\xi ,r^{-1}\eta )
\eeq
does not vanish for $(\varrho ,\xi,\eta )\neq 0,$ up to $r=0;$ moreover, \eqref{isyeg} is bijective for all $(y,\eta )\in T^*Y\setminus 0$ and any $s\in \R.$ It is well-known that the ellipticity condition with respect to $\sigma _0(A)$  entails the Fredholm property of \eqref{isyeg} for every $\gamma $ off some discrete set $D(y)$ of reals. In this case, when a certain topological obstruction vanishes (which is the case in our applications) we can enlarge \eqref{isyeg} by finite rank operators to a bijective $2\times 2$ block matrix family with $\sigma _1(A)(y,\eta )$ in the upper left corner. These extra entries encode trace and potential operators on the level of edge symbols, to guarantee the existence of local parametrices, cf. also Remark \ref{22.R.ellcon} below. For simplicity we assume for the moment that \eqref{isyeg} itself is bijective, without such additional data. It may be difficult to explicitly decide for which weights $\gamma $ this is the case (by virtue of relative index results under changing $\gamma $ the index of \eqref{isyeg} can be modified). Concrete results on the index of elliptic edge symbols may be found in \cite{Haru13}. Bijectivity results are also announced in \cite{Flad4}. Another useful observation is that there are always so-called smoothing Mellin plus Green symbols $\sigma _1(M+G)(y,\eta )$ of the edge calculus such that $\sigma _1(A+M+G)(y,\eta )$ is bijective. Below in our constructions we will say more about such structures; let us only note at this point that the bijectivity does not depend on $s$ but depends on $(y,\eta /|\eta |).$ In our construction of asymptotic parametrices  under such an ellipticity condition we will refer to the above-mentioned discrete exceptional weights that are determined by the non-bijectivity points of the subordinate conormal symbol
\beq\label{coyeg}
\sigma _1(\sigma _1(A))(y,z):=\sum_{j=0}^ \mu  a_{j0 }(0,y)z^j:H^s(X)\rightarrow H^{s-\mu }(X)
\eeq
pointwise computed for $\sigma _1(A))(y,\eta )\in C^\infty (T^*\Omega \setminus 0,\textup{Diff}_{\textup{deg}}^\mu (X^\wedge)),$ according to \eqref{int.conc}. Later on, in order to avoid confusion with the different $\sigma _1$ we also set $\sigma _{\textup{c}}(A)(y,z):=\sigma _1(\sigma _1(A))(y,z).$\\
\nt The main intention of this paper is to develop a new concise approach to pseudo-differential cone and edge algebras and to elliptic regularity with asymptotics. The recent experience of the authors with the problem of explictly computing the asymptotic data of solutions, cf. the articles \cite{Flad3}, \cite{Flad4}, is that it needs parallel work to organise the construction of parametrices and of the underlying symbolic structures in a more transparent and accessible way. Of course, we refer to the basics of the cone and edge calculus, known from a number of articles and monographs, cf. \cite{Schu32}, \cite{Schu2}, \cite{Egor1}, \cite{Schu20}. Already several typical applications to the crack theory or to mixed problems required once again voluminous extensions, cf. \cite{Kapa10} or \cite{Haru13}. The concrete models that we have in mind here and in future should be understandable in terms of a minimal theoretical machinery. The point is that in particle physics relatively simple elliptic operators (essentially Laplacians on a smooth manifold, e.g. in $\R^{3N}$) are combined with singular potentials of the type $\sum_{j,k\leq N}c_{jk}|{\bf{x^j}}-{\bf{x^k}}|^{-1}$ where ${\bf{x^j}}=(x^j_1,x^j_2,x^j_3)$ represents the position of the $j$-th particle in $\R^3.$ The singularities of the potential give rise to a corner manifold, embedded in $\R^{3N},$ and the problem is to understand the asymptotics of solutions close to that singular set. This is a task of ``unexpected" complexity, even for small $N \,(=1 \,\,\mbox{or}\,\, 2),$ corresponding to conical or edge singularities. We found it therefore justified to reorganise the way of deriving asymptotics of solutions by means of so-called asymptotic parametrices which are introduced in this paper both for the conical and the edge case. Since the concept is based on ``ordinary" parametrix constructions we also discuss this aspect here, either with new proofs or corresponding references. \\
\nt This paper is organised as follows. In Section 1 we formulate the cone algebra including its symbolic structure, ellipticity and (asymptotic) parametrices. This material may also be regarded as a tool and simplest model for the case of edge singularities in Section 2 which is the main part of this article. We establish the edge algebra with discrete asymptotics, including the symbolic machinery and the structure of ellipticity with (asymptotic) parametrices.
In addition we discuss several special cases and simplifications of the general approach, e.g. when (as in our applications to the above-mentioned case, to be published elsewhere) the original operator is a differential operator. Let us finally note that larger $N$ requires the calculus for higher order corners. Also this technique should be further smoothened and simplified, to make the case of higher particle numbers accessible. 
\section{Elliptic cone operators} 
\subsection{Operators of Fuchs type} 
Let $M$ be a manifold with conical singularity $v:=s_1(M).$ Because of \eqref{int.cone} we have a local identification of $M\setminus \{v\}$ with $X^\wedge$ which allows us to attach a copy of $X$ to $M\setminus \{v\}.$ In this way we get the stretched manifold $\M$ associated with $M$ which is a smooth manifold with boundary $\partial \M\cong  X.$ A collar neighbourhood of $\partial \M$ may be identified with $\overline{\R}_+\times X$ (or, equivalently, with $[0,1)\times X $). For purposes below on $M\setminus \{v\}$ we fix a strictly positive function ${\bf{r}}^1\in C^\infty (M\setminus \{v\})$ that is equal to $r$ close to $v$ in a fixed splitting of variables $(r,x)\in \R_+\times X.$ For any $\beta \in\R$ we call ${\bf{r}}^\beta $ a weight function with weight $\beta .$\\

\nt The cone algebra on $M$ is defined as a union of subspaces $L^\mu (M,{\bf{g}})\subset L_{\textup{cl}}^\mu(M\setminus \{v\}),\mu \in \R,$ labelled with weight data ${\bf{g}}:=(\gamma ,\gamma -\mu ,\Theta )$ for a weight $\gamma \in \R$ and a weight interval $\Theta ,$ to be specified below. Here $L_{\textup{cl}}^\mu(\cdot)$  is the space of classical pseudo-differential operators on the manifold in parentheses (assumed to be Riemannian, with an asociated metric $dx$). We also employ the parameter-dependent variant $L_{\textup{cl}}^\mu(\cdot\,;\R^l),\lambda \in \R^l,$ i.e. the space of families of operators $A(\lambda )$ in  $L_{\textup{cl}}^\mu(\cdot),$ modulo $L^{-\infty }(\cdot\,;\R^l):= \Scal(\R^l,L^{-\infty }(\cdot))$ described by local amplitude functions $a(x,\xi ,\lambda )$ with $(\xi ,\lambda )$ being treated as covariables.
One of the ingredients of the cone algebra is the space $M^\mu_{\mathcal{O}}(X)$ of holomorphic $L_{\textup{cl}}^\mu (X)$-valued Mellin symbols. The background is that the operator $-r\partial _r$ occurring in \eqref{i.degc} can be expressed as a Mellin pseudo-differential operator on the half-axis with the symbol $z$, namely, $-r\partial _r=M^{-1}zM, $ say, as an operator $C_0^\infty (\R_+)\rightarrow C_0^\infty (\R_+).$ Here
\beq\label{11.E.mel}
Mu(z)=\int_0^\infty r^zu(r)dr/r\,\,\,\mbox{with the inverse}\,\,\,M^{-1}g(r)=\int_{\Gamma _{1/2-\gamma }}r^{-z}g(z)\dbar z
\eeq
for any $\gamma \in \R,\dbar z:= (2\pi i)^{-1}dz.$\\

\nt In the edge calculus below we need Mellin symbols also in parameter-dependent form. Let us recall the definition.
First, if $U\subseteq \C$ is an open set and $E$ a Fr\'echet space, by $\Acal(U,E)$ we denote the space of holomorphic functions in $U$ with values in $E$ in the Fr\'echet topology of uniform convergence on compact sets. The space $M^\mu _{\mathcal{O}}(X;\R^l)$  is defined to be the set of all $h\in \Acal(\C,L_{\textup{cl}}^\mu (X;\R^l))$ such that
$h(z,\lambda )|_{\Gamma _\beta \times \R^l }\in L_{\textup{cl}}^\mu (X;\Gamma _\beta \times \R^l))$ for every real $\beta ,$ uniformly in compact $\beta $-intervals. In the latter notation $\textup{Re}\,z$ for $z\in \Gamma _\beta$ belongs to the parameters.\\

\nt For a Mellin amplitude function $f(r,r',z,\lambda )$ taking values in $L_{\textup{cl}}^\mu (X;\Gamma _{1/2-\gamma } \times \R^l)$ we form the associated (parameter-dependent) weighted Mellin pseudo-differential operator
\beq\label{11.E.melosc}
\op_M^\gamma (f)(\lambda )u(r)=\int_\R\int_0^\infty (r/r')^{-(1/2-\gamma +i\rho )}f(r,r',1/2-\gamma +i\rho ,\lambda )u(r')dr'/r'\dbar \rho , 
\eeq
$ \dbar \rho=(2\pi )^{-1}d\rho,$ interpreted as a Mellin oscillatory integral and representing a family of operators 
\beq\label{11.E.mpso}
C_0^\infty (\R_+,C^\infty (X))\rightarrow C^\infty (\R_+,C^\infty (X)).
 \eeq
 in $L_{\textup{cl}}^\mu (X^\wedge;\R^l).$
We can also write $$\op_M^\gamma (f)u=r^\gamma \op_M(T^{-\gamma }f) r^{-\gamma }u$$ for $\op_M(\cdot):=\op_M^0(\cdot)$ and $(T^\beta f)(z):=f(z+\beta ),\beta \in\R$ (the other variables in the latter expression have been suppressed). \\
\begin{thm}\label{11.T.cut} \textup{(}cf. \cite[Section 2.2.2]{Schu20}\textup{)}
For every $f(z,\lambda )\in L_{\textup{cl}}^\mu (X;\Gamma _\beta \times \R^l)$ for some fixed $\beta \in \R$ there exists an $h(z,\lambda )\in M^\mu _{\mathcal{O}}(X;\R^l)$ such that 
\beq\label{11.E.meid}
f=h|_{\Gamma _\beta \times \R^l} \,\,\,\mbox{\textup{mod}}\,\,\,L^{-\infty }(X;\Gamma _\beta \times \R^l),
\eeq
and $h$ is unique modulo $M^{-\infty }_{\mathcal{O}}(X;\R^l).$
\end{thm}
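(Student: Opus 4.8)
\nt The plan is to prove Theorem \ref{11.T.cut} by the \emph{kernel cut-off} technique, working in local coordinates on $X$ with $(\xi,\lambda)$ treated as additional covariables and patching the outcome by a partition of unity; smoothing remainders are negligible and may be discarded at each stage. The first step is to pass from the symbol $f$ to its Mellin kernel along $\Gamma_\beta$,
\beq
k(r):=\int_{\Gamma_\beta}r^{-z}f(z)\,\dbar z ,
\eeq
so that, by Mellin inversion with inversion contour $\Gamma_\beta$, one has $\int_0^\infty r^z k(r)\,dr/r=f(z)$ for $z\in\Gamma_\beta$. Writing $z=\beta+i\rho$ and $t=\log r$ identifies $r^\beta k(r)$ with the inverse Fourier transform (in $\rho$) of $f(\beta+i\cdot)$, which by hypothesis is a classical $L_{\textup{cl}}^\mu(X;\R^l)$-valued symbol in $\rho$. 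Hence $k$ is $C^\infty$ for $r\neq1$, is rapidly decreasing together with all $r$-derivatives as $r\to0$ and $r\to\infty$ (there with values in $L^{-\infty}(X;\R^l)$), and has a conormal, symbol-type singularity only at $r=1$.

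\nt Next, fix any $\psi\in C_0^\infty(\R_+)$ with $\psi\equiv1$ near $r=1$ and put
\beq
h(z):=\int_0^\infty r^z\,\psi(r)\,k(r)\,\frac{dr}{r}, \qquad z\in\C .
\eeq
Since $\supp\psi$ is a compact subset of $(0,\infty)$, this integral converges for every $z$ and is holomorphic in $z$, so $h\in\Acal(\C,L_{\textup{cl}}^\mu(X;\R^l))$. Restricting to $\Gamma_{\beta'}$ and substituting $t=\log r$ exhibits $h|_{\Gamma_{\beta'}}$ as the Fourier transform, in the variable dual to $\textup{Im}\,z$, of the compactly supported distribution $r^{\beta'}\psi(r)k(r)$; because $r^{\beta'-\beta}$ stays bounded on $\supp\psi$ uniformly for $\beta'$ in compact intervals, $h|_{\Gamma_{\beta'}}$ is a classical symbol of order $\mu$ whose homogeneous principal part equals that of $f$, with symbol estimates uniform on compact $\beta'$-intervals. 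This is exactly the defining property of $M^\mu_{\mathcal{O}}(X;\R^l)$.

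\nt For the identity \eqref{11.E.meid} one computes, for $z\in\Gamma_\beta$,
\beq
f(z)-h(z)=\int_0^\infty r^z\bigl(1-\psi(r)\bigr)k(r)\,\frac{dr}{r}.
\eeq
The factor $1-\psi$ vanishes identically near $r=1$, hence $(1-\psi)k$ is $C^\infty$ on all of $\R_+$, and for $r\to0,\infty$ it coincides with the rapidly decreasing tail of $k$; thus $(1-\psi)k$ is Schwartz in $\log r$ with values in $L^{-\infty}(X;\R^l)$, and its Mellin transform on $\Gamma_\beta$ lies in $L^{-\infty}(X;\Gamma_\beta\times\R^l)$. This proves $f=h|_{\Gamma_\beta}$ modulo $L^{-\infty}(X;\Gamma_\beta\times\R^l)$.

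\nt Uniqueness reduces to showing that $g\in M^\mu_{\mathcal{O}}(X;\R^l)$ with $g|_{\Gamma_\beta}\in L^{-\infty}(X;\Gamma_\beta\times\R^l)$ forces $g\in M^{-\infty}_{\mathcal{O}}(X;\R^l)$ (apply this to the difference of two admissible $h$'s). Let $\kappa(r):=\int_{\Gamma_\beta}r^{-z}g(z)\,\dbar z$ be the Mellin kernel of $g$ along $\Gamma_\beta$; since $g|_{\Gamma_\beta}$ is rapidly decreasing with all derivatives, $\kappa$ is now $C^\infty$ across $r=1$ as well. Holomorphy of $g$ on $\C$ together with the uniform symbol estimates of $g|_{\Gamma_{\beta'}}$ on compact $\beta'$-intervals permits a Paley--Wiener--type shift of the inversion contour: the same $\kappa$ is obtained from every line, so that $r^{\beta'}\kappa(r)$ equals the inverse Fourier transform in $\textup{Im}\,z$ of $g|_{\Gamma_{\beta'}}$ evaluated at $\log r$, for every $\beta'$; since each of these is rapidly decreasing in $\log r$, $\kappa$ must decay faster than any exponential in $|\log r|$. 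Hence $\int_0^\infty r^z\kappa(r)\,dr/r$ converges for all $z$, agrees with $g$ on $\Gamma_\beta$, so equals $g$ by the identity theorem, and its restriction to $\Gamma_{\beta'}$ displays $g|_{\Gamma_{\beta'}}$ as the Fourier transform of a $C^\infty$, super-exponentially decreasing function; therefore $g|_{\Gamma_{\beta'}}\in L^{-\infty}(X;\Gamma_{\beta'}\times\R^l)$ uniformly for $\beta'$ in compact intervals, i.e. $g\in M^{-\infty}_{\mathcal{O}}(X;\R^l)$. The step I expect to cost genuine work is exactly this contour shift: for $\mu\geq0$ the families $g|_{\Gamma_{\beta'}}$ are not integrable on vertical lines, so the argument must be run at the level of symbol seminorms --- e.g. after reducing to the case of sufficiently negative order by composition with order-reducing holomorphic Mellin families --- rather than on the functions themselves; everything else is routine symbol bookkeeping for Fourier transforms against compactly supported cut-offs. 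Full details are in \cite[Section 2.2.2]{Schu20}.
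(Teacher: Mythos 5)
The paper offers no proof of Theorem \ref{11.T.cut}; it is quoted from \cite[Section 2.2.2]{Schu20}, and your kernel cut-off argument is precisely the standard proof given there, so in substance you are following the same route as the source. The existence half is correct as written, with the caveat that the real content is the assertion you label ``routine symbol bookkeeping'': namely that convolving a classical symbol $\rho\mapsto f(\beta+i\rho)$ with the Fourier transform of the compactly supported function $t\mapsto e^{(\beta'-\beta)t}\psi(e^t)$ again yields a classical symbol of order $\mu$, uniformly for $\beta'$ in compact intervals, with unchanged principal part because $\psi(e^0)=1$; this is the kernel cut-off lemma and deserves at least the one-line Taylor-expansion justification. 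For uniqueness, the contour shift you flag is indeed the only delicate point, but it can be closed without order reductions: for $t\neq 0$ one integrates by parts in $\rho$ ($N>\mu+1$ times) to make the integrand absolutely convergent and then applies Cauchy's theorem in the strip, so the kernels $\kappa_{\beta'}$ agree away from $t=0$; combining the rapid decay of each $e^{\beta' t}\kappa_{\beta'}(t)$ for $|t|$ large with the smoothness of $\kappa_\beta$ across $t=0$ then yields the super-exponential decay you need, and the rest of your argument goes through.
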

\begin{thm}\label{11.T.melconv} \textup{(}cf. \cite{Schu33}, or \cite[Section 2.2.2]{Schu20}\textup{)}
Let
\beq\label{11.E.asp}
\tilde{p}(r,\tilde{\rho })\in C^\infty (\overline{\R}_+,L_{\textup{cl}}^\mu (X;\R_{\tilde{\rho }})),\,p(r,\rho ):=\tilde{p}(r,r\rho ).
\eeq
Then there is an $h(r,z)\in C^\infty (\overline{\R}_+,M^\mu _{\mathcal{O}}(X)),$ unique modulo $C^\infty (\overline{\R}_+,M^{-\infty } _{\mathcal{O}}(X)),$ such that 
\beq\label{11.E.melconvt}
\Op_r(p)=\op_M^\beta (h)\,\,\,\mbox{\textup{mod}}\,\,\,L^{-\infty }(X^\wedge)
\eeq
for every $\beta \in \R.$ Conversely, for every $h(r,z)\in C^\infty (\overline{\R}_+,M^\mu _{\mathcal{O}}(X))$ there exists a $\tilde{p}(r,\tilde{\rho })\in C^\infty (\overline{\R}_+,L_{\textup{cl}}^\mu (X;\R_{\tilde{\rho }})),$ unique modulo $C^\infty (\overline{\R}_+,L^{-\infty }(X;\R_{\tilde{\rho }})),$ such that \eqref{11.E.melconvt} holds for every $\beta \in \R.$
\end{thm}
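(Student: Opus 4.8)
The plan is to prove both implications by the \emph{Mellin kernel cut-off} construction, which belongs to the same circle of ideas that underlies Theorem~\ref{11.T.cut}: everything is reduced to one explicit change of variables together with the symbol estimates it produces. For the direct assertion I would first rewrite $\Op_r(p)$ in Mellin form on the level of Schwartz kernels. Inserting the Mellin inversion formula \eqref{11.E.mel} for $u$ into $\Op_r(p)u$, and using that the Schwartz kernel of $\Op_r(p)$ is smooth off the diagonal $r=r'$ (so that a cut-off supported near $r'/r=1$ may be inserted at the cost of a term in $L^{-\infty}(X^\wedge)$), one finds $\Op_r(p)=\op_M^\beta(h)$ modulo $L^{-\infty}(X^\wedge)$, where $\op_M^\beta(h)u(r)=\int_{\Gamma_\beta}r^{-z}h(r,z)(Mu)(z)\,\dbar z$ and the candidate symbol is the oscillatory integral
\[
h(r,z)=\iint e^{i(r-r')\rho}\,\tilde p(r,r\rho)\,(r'/r)^{-z}\,dr'\,\dbar\rho .
\]
Substituting $r'=rs$ and then $\sigma=r\rho$ collapses the edge-degenerate scaling and gives, modulo $C^\infty(\overline{\R}_+,M^{-\infty}_{\Ocal}(X))$,
\[
h(r,z)\;\equiv\;\int_\R \chi(t)\,K(r,t)\,(1+t)^{-z}\,dt,\qquad K(r,t):=\big(\Fcal^{-1}_{\tilde\rho\to t}\tilde p(r,\cdot)\big)(t),
\]
with $\chi\in C_0^\infty(\R)$ a cut-off near $t=0$. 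Since $\tilde p(r,\cdot)$ is a classical symbol of order $\mu$, the operator-valued distribution $K(r,\cdot)$ is smooth off $t=0$ and rapidly decreasing, so the $t$-away-from-zero contribution is smoothing; and near $t=0$ the factor $(1+t)^{-z}=e^{-z\log(1+t)}$ is entire in $z\in\C$ because $\log(1+t)$ is analytic there, whence $z\mapsto h(r,z)$ extends to an entire function.

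The technical heart, and the step I expect to be the main obstacle, is to verify that the $h$ so obtained actually lies in $C^\infty(\overline{\R}_+,M^\mu_{\Ocal}(X))$, i.e.\ that $h(r,\cdot)|_{\Gamma_\beta}\in C^\infty(\overline{\R}_+,L_{\textup{cl}}^\mu(X;\Gamma_\beta))$ uniformly for $\beta$ in compact intervals. Writing $z=\beta+i\tau$, this amounts to showing that the Mellin transform against $(1+t)^{-\beta-i\tau}$ converts the nature of the singularity of $K(r,t)$ at $t=0$ — which, from the asymptotic expansion $\tilde p\sim\sum_j\tilde p_{\mu-j}$ into homogeneous components, is an asymptotic expansion into terms homogeneous of degree $-1-(\mu-j)$ in $t$ plus smoother remainders — into classical symbol behaviour of order $\mu$ in $\tau$: each $\partial_\tau$ brings down a factor $-i\log(1+t)=-i\,t\,(1+O(t))$, i.e.\ one extra power of $t$ near $t=0$, which accounts for the gain of $\la\tau\ra^{-1}$, so that $k$ derivatives in $\tau$ gain $\la\tau\ra^{-k}$. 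Uniformity in $\beta$ is clear since $(1+t)^{-\beta}$ is bounded with bounded derivatives near $t=0$, and smoothness up to $r=0$ is inherited from $\tilde p$. Once $h\in C^\infty(\overline{\R}_+,M^\mu_{\Ocal}(X))$ is in hand, holomorphy together with these uniform estimates permits shifting the contour $\Gamma_\beta$ by Cauchy's theorem, so that $\op_M^\beta(h)$ is independent of $\beta$ modulo $L^{-\infty}(X^\wedge)$; combined with the identity $\Op_r(p)=\op_M^\beta(h)\ \textup{mod}\ L^{-\infty}(X^\wedge)$ established above for one fixed $\beta$, this yields \eqref{11.E.melconvt} for every $\beta$.

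For the converse I would run the scheme backwards: given $h\in C^\infty(\overline{\R}_+,M^\mu_{\Ocal}(X))$, recover a ``Mellin kernel'' $K(r,t)$ from $h|_{\Gamma_\beta}$ by the inverse Mellin transform in $s=1+t$, apply a Fourier kernel cut-off $\chi$ near $t=0$, and put $\tilde p(r,\tilde\rho):=\big(\Fcal_{t\to\tilde\rho}(\chi K)\big)(r,\tilde\rho)$; the same correspondence between the $t$-singularity of $K$ and classical symbol behaviour in $\tilde\rho$, now used in the opposite direction, yields $\tilde p\in C^\infty(\overline{\R}_+,L_{\textup{cl}}^\mu(X;\R_{\tilde\rho}))$ and $\Op_r\big(\tilde p(r,r\rho)\big)=\op_M^\beta(h)$ modulo $L^{-\infty}(X^\wedge)$. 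Uniqueness then follows from the Mellin pseudo-differential calculus: if $h_1,h_2$ (respectively $\tilde p_1,\tilde p_2$) both satisfy \eqref{11.E.melconvt}, then $\op_M^\beta(h_1-h_2)\in L^{-\infty}(X^\wedge)$ for every $\beta$, hence $(h_1-h_2)|_{\Gamma_\beta}\in L^{-\infty}(X;\Gamma_\beta)$ for every $\beta$ (uniformly on compact $\beta$-intervals, by the same estimates), which for a holomorphic family forces $h_1-h_2\in C^\infty(\overline{\R}_+,M^{-\infty}_{\Ocal}(X))$; similarly $\tilde p$ is determined modulo $C^\infty(\overline{\R}_+,L^{-\infty}(X;\R_{\tilde\rho}))$, since $\tilde p\mapsto\Op_r\big(\tilde p(r,r\rho)\big)$ is injective modulo smoothing.
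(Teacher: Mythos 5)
The paper does not prove this theorem at all — it is quoted from \cite{Schu33} and \cite[Section 2.2.2]{Schu20} — and your kernel cut-off argument (substitute $r'=rs$, $\sigma=r\rho$ to collapse the edge-degenerate scaling, cut off the Schwartz kernel near the diagonal at the cost of an $L^{-\infty}(X^\wedge)$ term, read off $h$ as the Mellin transform of the cut-off kernel, and reduce the symbol estimates on $\Gamma_\beta$ to the Fourier kernel cut-off of Theorem \ref{11.T.cut} via the substitution $t\mapsto\log(1+t)$, which is tangent to the identity at $t=0$ and hence preserves order and classicality) is precisely the standard proof in those references. The one step you should expand is the uniqueness claim: the passage from $\op_M^\beta(h_1-h_2)\in L^{-\infty}(X^\wedge)$ to $(h_1-h_2)|_{\Gamma_\beta}\in L^{-\infty}(X;\Gamma_\beta)$, i.e.\ to Schwartz decay of the operator family in $\imb\,z$, is not immediate from the operator being smoothing; the standard route is instead to use the kernel cut-off identity $h=H(\chi)(h|_{\Gamma_\beta})$ modulo $M^{-\infty}_{\Ocal}(X)$ for holomorphic $h$, which reduces the uniqueness of $h$ to the injectivity of $\Op_r$ modulo smoothing on a single line together with the order-improvement principle of Remark \ref{11.R.ord}.
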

\nt The relation \eqref{11.E.asp} refers to pseudo-differential operators on $X^\wedge$ in the sense of mappings 
\eqref{11.E.mpso}.

\nt The structures of the cone algebra may be deduced by the task to express the parametrix of an elliptic operator of the form \eqref{i.degc}. For $h(r,z):=\sum_{j=0}^\mu a_j(r)z^j$ which is an element of $C^\infty (\overline{\R}_+,
M^\mu _\Ocal(X))$ we write
\beq\label{11.E.diff}
A=r^{-\mu }\op^{\gamma -n/2}_M(h)
\eeq
for any fixed $\gamma \in \R$ and $n:=\textup{dim}\,X.$ In order to briefly recall the result we 
identify $\textup{int}\,M:=M\setminus \{v\}$ with $\textup{int}\,\M=\M\setminus \partial \M$ and a collar neighbourhood of $\partial \M$ in $\M $ with $\overline{\R}_+\times X$ in the splitting of variables $(r,x).$ By a cut-off function on $\M$ we understand any real-valued $\omega \in C^\infty _0(\overline{\R}_+)$ that is equal to $1$ in a neighbourhood of $r=0.$ Given two functions $\varphi ,\varphi '\in C^\infty (\M)$ we write $\varphi \prec \varphi '$ if $\varphi '\equiv 1$ on $\textup{supp}\,\varphi .$ Now let $\omega ,\omega ',\omega ''$ be cut-off functions with $\omega ''\prec \omega \prec \omega ',$ and set
$\chi :=1-\omega ,\chi ':=1-\omega ''.$ Then every $A\in \textup{Diff}^\mu _{\textup{deg}}(M)$ can be written in the form
\beq\label{11.E.diffdec}
A=\omega r^{-\mu }\op^{\gamma -n/2}_M(h)\omega '+\chi A_{\textup{int}}\chi '
\eeq
where $A_{\textup{int}}$ in the second summand on the right is $A$ itself. In general, the pseudo-differential operators of the cone algebra  have the form
\beq\label{11.E.conedec}
A=\omega r^{-\mu }\op^{\gamma -n/2}_M(h)\omega '+\chi A_{\textup{int}}\chi ' +M+G
\eeq
for arbitrary $h\in C^\infty (\overline{\R}_+,
M^\mu _\Ocal(X)),A_{\textup{int}}\in L^\mu _{\textup{cl}}(\textup{int}\,M),$ and additional summands $M+G,$ so-called smoothing Mellin plus Green operators. The latter represent asymptotic information. In order to explain their structure we formulate weighted cone spaces and subspaces with asymptotics.\\

\nt First $\Hcal^{s,\gamma }(\R_+\times \R^n)$ for $s,\gamma \in \R$ is defined as the completion of $C_0^\infty (\R_+\times \R^n)$ with respect to the norm
\beq\label{11.E.conenorm}
\|u \| _{\Hcal^{s,\gamma }(\R_+\times \R^n)}:=\Big\{\int_{\R^n}\int_{\Gamma _{(n+1)/2-\gamma }}\langle z,\xi \rangle^{2s}|M_{r\rightarrow z}F_{x\rightarrow \xi }u(z,\xi )|^2\dbar z d\xi\Big\}^{1/2}.
\eeq
For compact $X$ we then obtain the spaces $\Hcal^{s,\gamma }(X^\wedge),X^\wedge=\R_+\times X,$ by using
\eqref{11.E.conenorm} as local representations, combined with a partition of unity. The notation with calligraphic letters indicates that the spaces contain some specific information also at $r=\infty .$ On a compact manifold $M$ with conical singularity $v$ we set 
\beq\label{11.E.conespace}
H^{s,\gamma }(M):=\{u\in H^s_{\textup{loc}}(M\setminus \{v\}):\omega u\in \Hcal^{s,\gamma }(X^\wedge)\}
\eeq
for some cut-off function $\omega .$ Recall that for convenience we identify $M\setminus \{v\}$ locally close to the singularitiy with $X^\wedge,$ i.e. we often suppress pull backs or push forwards under a corresponding map from the local model to the manifold itself. This map is kept fixed, in particular, the local splitting of variables $(r,x).$ This is the usual point of view when we formulate asymptotics for $r\rightarrow 0.$ Observe that the operator of multiplication by ${\bf{r}}^\beta, \beta \in \R,$ induces an isomorphism
\beq\label{11.E.coflk}
{\bf{r}}^\beta:H^{s,\gamma }(M)\rightarrow H^{s,\gamma +\beta }(M).
\eeq
\nt Here ${\bf{r}}^\beta $ is the weight function defined at the beginning of this section.

\nt Asymptotics of an element $u\in H^{s,\gamma }(M)$ of type $P:=\{(p_j,m_j)\}_{j\in \N}\subset \C\times \N$ (where we assume $\pi _\C P:=\{p_j\}_{j\in \N}\subset \{\textup{Re}\,z<(n+1)/2-\gamma \}$ and $\textup{Re}\,p_j\rightarrow -\infty $ as $j\rightarrow \infty $ when $\pi _\C P$ is an infinite set) means that for every $\beta \geq 0$ there is an $N=N(\beta )\in \N$ such that for any cut-off function $\omega $
\beq\label{11.E.asflk}
u-\sum_{j=0}^N\sum_{k=0}^{m_j}\omega (r)c_{jk}r^{-p_j}\textup{log}^k r\in {\bf{r}}^\beta H^{s,\gamma }(M)
\eeq 
for some coefficients $c_{jk}\in C^\infty (X).$ It can easily be proved that the coefficients are uniquely determined by $u$. The space of those $u$ will be denoted by $H_P^{s,\gamma }(M).$ It may also be interesting to control the asymptotics in terms of a condition $c_{jk}\in L_j,0\leq k\leq m_j,$ where $L_j$ are prescribed finite-dimensional subspaces of $C^\infty (X).$ In this case the asymptotic type is a corresponding sequence of triples $\{(p_j,m_j,L_j)\}_{j\in \N}.$ In the general discussion we content ourselves with the case $P:=\{(p_j,m_j)\}_{j\in \N}.$ Asymptotics including $L_j$ are a simple modification.\\

\nt For computations it is convenient to take into account also finite asymptotic expansions. We say that a $P=\{(p_j,m_j)\}_{j=0,\dots,J}$ for a $J=J(P)\in \N$ is associated with the weight data ${\bf{g}}:=(\gamma  ,\Theta )$ for a finite (so-called weight interval) $\Theta :=(\vartheta ,0]\subset \overline{\R}_-$ if $\pi _\C P:=\{p_j\}_{j=0,\dots,J}\subset \{(n+1)/2-\gamma +\vartheta <\textup{Re}\,z<(n+1)/2-\gamma \}.$ An asymptotic type $P$ is said to satisfy the shadow condition if $p\in \pi _\C P$ entails $p-j\in \pi _\C P$ for all $j\in \N$ with $\textup{Re}\,p-j>(n+1)/2-\gamma +\vartheta .$ Let $\Theta $ be finite.
The spaces
\beq\label{11.E.astflk}
\Ecal_P:=\{\sum_{j=0}^J\sum_{k=0}^{m_j}\omega (r)c_{jk}r^{-p_j}\textup{log}^k :c_{jk}\in C^\infty (X)\,\,\mbox{for all}\,\,j,k\}
\eeq 
and
\beq\label{11.E.astflkfl}
H^{s,\gamma }_\Theta (M):=\bigcap_{\varepsilon >0} H^{s,\gamma -\vartheta -\varepsilon }(M)
\eeq
are both Fr\'echet in a natural way, and we have $H^{s,\gamma }_\Theta (M)\cap \Ecal_P=\{0\}.$ We set  
\beq\label{11.E.astflkflm}
H^{s,\gamma }_P(M):=\Ecal_P+H^{s,\gamma }_\Theta (M)
\eeq
in the Fr\'echet topology of the direct sum. Note that in the above case $P:=\{(p_j,m_j)\}_{j\in \N}\subset \C\times \N$ corresponds to $\Theta =(-\infty ,0],$ and the space $H_P^{s,\gamma }(M)$ can equivalently be defined as $H_P^{s,\gamma }(M):=\bigcap _{k\in \N} H^{s,\gamma }_{P_k}(M)$ where $P_k:=\{(p,m)\in P:\textup{Re}\,p_k>(n+1)/2-\gamma -(k+1)\}$ is finite, associated with the weight data $(\gamma ,\Theta _k)$ for $\Theta _k:=(-(k+1),0],\,k\in \N.$\\

\nt An operator $G\in \bigcap _{s\in \R}\Lcal(H^{s,\gamma }(M),H^{\infty ,\gamma -\mu }(M))$ is said to be a Green operator in the cone algebra, associated with the weight data ${\bf{g}}:=(\gamma ,\gamma -\mu ,\Theta )$ if it induces continuous operators
\beq\label{11.E.green}
G:H^{s,\gamma }(M)\rightarrow H_P^{\infty ,\gamma -\mu }(M),\,\,\,G^*:H^{s,-\gamma +\mu }(M)\rightarrow H_Q^{\infty ,-\gamma }(M),
\eeq
for all $s\in \R$ and $G$-dependent asymptotic types $P,Q,$ associated with the weight data $(\gamma -\mu ,\Theta )$ and
$(-\gamma ,\Theta ),$ respectively. By $$L_G(M,{\bf{g}})\,\,\mbox{for}\,\,{\bf{g}}:=(\gamma ,\gamma -\mu ,\Theta )$$
we denote the space of all Green operators.
\nt Let us also recall the notion of smoothing Mellin operators, occurring in \eqref{11.E.conedec}. A sequence 
\beq\label{11.E.mels}
R:=\{(r_j,n_j)\}_{j\in \Z}\subset \C\times \N\,\,\,\mbox{with}\,\,\,|\textup{Re}\,r_j|\rightarrow \infty \,\,\,\mbox{as }\,\,\,|j|\rightarrow \infty 
\eeq 
is called a Mellin asymptotic type. Set $\pi _\C R:=\{r_j\}_{j\in \Z}.$ A $\chi \in C^\infty (\C)$ is called a $\pi _\C R$-excision function if $\chi (z)=0 $ for $\textup{dist}\,(z,\pi _\C R)<\varepsilon _0,$ and $\chi (z)=1 $ for $\textup{dist}\,(z,\pi _\C R)>\varepsilon _1$ for some $0<\varepsilon _0<\varepsilon _1.$ The space $M_R^{-\infty }(X)$ is defined to be the set of all $f\in \Acal(\C\setminus \pi _\C R,L^{-\infty }(X))$ such that $\chi f|_{\Gamma _\beta }\in L^{-\infty }(X;\Gamma _\beta )$ for every $\beta \in \R$ and any $\pi _\C R$-excision function $\chi  ,$ uniformly in finite $\beta $-intervals, and in addition $f$ is meromorphic with poles at the points $r_j$ of multiplicity $n_j+1$ and Laurent coefficients at $(z-r_j)^{-(k+1)},0\leq k\leq n_j,$ in $L^{-\infty }(X)$ of finite rank. Moreover, we set
\beq\label{11.E.melass}
 M^\mu _R(X):=M^\mu _\Ocal(X)+M_R^{-\infty }(X).
\eeq
\begin{rem}\label{11.R.mdec}
The decomposition \eqref{11.E.melass} is not direct since $M^\mu _\Ocal(X)\cap M_R^{-\infty }(X)=M_\Ocal^{-\infty }(X)$ is non-trivial. For any $f\in  M^\mu _R(X)$ we can produce a $g\in M_R^{-\infty }(X)$ such that $f-g\in M^\mu _\Ocal(X)$ by applying Theorem \ref{11.T.cut} to $f|_{\Gamma _\beta }\in L^\mu _{\textup{cl}}(X;\Gamma _\beta)$ for any $\beta \in \R$ with $\pi _\C R\cap \Gamma _\beta =\emptyset.$ More generally, this process applies to $f(y,z)\in C^\infty (\Omega ,M^\mu _R(X)),\Omega \subseteq \R^q$ open, and gives us a $g(y,z)\in C^\infty (\Omega ,M^{-\infty } _R(X))$ such that $f(y,z)-g(y,z)\in C^\infty (\Omega ,M^\mu _\Ocal(X)).$
\end{rem}
\begin{rem}\label{11.R.ord}
Let $f\in  M^\mu _R(X)$ and choose a $\beta \in \R$ such that $\pi _\C R\cap \Gamma _\beta =\emptyset,$ and assume that $f|_{\Gamma _\beta}\in L^{\mu -1}_{\textup{cl}}(X;\Gamma _\beta).$ Then we have $f\in  M^{\mu -1}_R(X).$
\end{rem}
\begin{thm}\label{11.T.mmu} \textup{(}cf. \cite[Section 2.1.2]{Schu2}\textup{)}
We have
\beq\label{11.E.memu}
f\in  M^\mu _R(X),g\in  M^\nu _Q(X)\Rightarrow fg\in  M^{\mu +\nu }_S(X)
\eeq
for every $R,Q$ with some resulting $S.$
\end{thm}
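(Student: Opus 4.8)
The plan is to reduce the product statement to three more elementary facts: the holomorphic case (that $M^\mu_\Ocal(X)\cdot M^\nu_\Ocal(X)\subseteq M^{\mu+\nu}_\Ocal(X)$), the behaviour of the smoothing meromorphic pieces under multiplication, and the mixed products. First I would write, using \eqref{11.E.melass}, $f=f_0+f_{-\infty}$ with $f_0\in M^\mu_\Ocal(X)$, $f_{-\infty}\in M^{-\infty}_R(X)$, and similarly $g=g_0+g_{-\infty}$ with $g_0\in M^\nu_\Ocal(X)$, $g_{-\infty}\in M^{-\infty}_Q(X)$; then $fg=f_0g_0+f_0g_{-\infty}+f_{-\infty}g_0+f_{-\infty}g_{-\infty}$ and it suffices to treat each summand. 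For the holomorphic term $f_0g_0$: pointwise in $z$ it is a product of two parameter-independent classical $\psi$DOs on the compact manifold $X$, whose composition lies in $L^{\mu+\nu}_{\clw}(X)$; the product of two entire $L^\bullet_{\clw}(X)$-valued functions is entire, and on each line $\Gamma_\beta$ one checks, uniformly on compact $\beta$-intervals, that $f_0g_0|_{\Gamma_\beta}\in L^{\mu+\nu}_{\clw}(X;\Gamma_\beta)$ — here $\reb\,z$ plays the role of an extra parameter, and composition of parameter-dependent classical $\psi$DOs stays in the parameter-dependent class with added orders. Hence $f_0g_0\in M^{\mu+\nu}_\Ocal(X)$.

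Next I would handle the three terms containing a smoothing meromorphic factor. The key point is that $M^{-\infty}_R(X)$ is, away from $\pi_\C R$, holomorphic with values in $L^{-\infty}(X)$, and $L^{-\infty}(X)$ is a two-sided ideal in $\bigcup_\mu L^\mu_{\clw}(X)$, closed under composition with itself. Concretely: $f_{-\infty}g_0$ is holomorphic on $\C\setminus\pi_\C R$ with values in $L^{-\infty}(X)$ (product of an $L^{-\infty}$-valued function with an $L^\nu_{\clw}$-valued one), its only possible poles are at the points of $\pi_\C R$, with the multiplicities of $f_{-\infty}$ unchanged or smaller, and the Laurent coefficients — being $L^{-\infty}(X)$-operators of finite rank composed on the right with $g_0(r_j)\in L^\nu_{\clw}(X)$ — remain finite rank in $L^{-\infty}(X)$; the line estimates $\chi f_{-\infty}g_0|_{\Gamma_\beta}\in L^{-\infty}(X;\Gamma_\beta)$ follow from the corresponding estimate for $f_{-\infty}$ and parameter-dependent composition, again reading $\reb\,z$ as a parameter. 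Symmetrically for $g_0 f_{-\infty}$, and $f_{-\infty}g_{-\infty}$ is treated the same way with both poles combined. So all three belong to some $M^{-\infty}_S(X)$ with $S$ determined by $R$, $Q$ and the extra pole contributions produced by evaluating the holomorphic factors — the shifted/translated pole data is exactly how $S$ arises. Assembling the four pieces and using \eqref{11.E.melass} once more, $fg\in M^{\mu+\nu}_\Ocal(X)+M^{-\infty}_S(X)=M^{\mu+\nu}_S(X)$.

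The main obstacle I expect is bookkeeping the Mellin asymptotic type $S$ and verifying that the Laurent coefficients of the product stay of \emph{finite rank} in $L^{-\infty}(X)$: while composing a finite-rank operator with any $\psi$DO on either side preserves finite rank, one must check that forming the Laurent expansion of a product of meromorphic families genuinely only involves finitely many terms at each pole (so that no infinite-rank coefficient can sneak in), and that the pole orders $n_j+1$ behave subadditively in the expected way; the condition $|\reb\,r_j|\to\infty$ must be seen to persist for the combined type $S$, which is where the translation-invariance of the hypotheses $\pi_\C R$, $\pi_\C Q$ is used. The uniformity-in-$\beta$ estimates on the lines $\Gamma_\beta$, though routine, also require a little care because composition constants for parameter-dependent $\psi$DOs must be controlled locally uniformly in the extra parameter $\reb\,z$.
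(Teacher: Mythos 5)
The paper gives no proof of this theorem, only the citation to \cite[Section 2.1.2]{Schu2}, and your argument — splitting $f$ and $g$ via \eqref{11.E.melass} into holomorphic and smoothing meromorphic parts, composing the holomorphic parts within $M^{\mu+\nu}_{\Ocal}(X)$ by parameter-dependent composition on each line $\Gamma_\beta$, and absorbing the three remaining terms into $M^{-\infty}_S(X)$ using the ideal property of $L^{-\infty}(X)$ together with finite-rank Laurent coefficients — is exactly the standard proof given in that reference. It is correct; the points you flag (additivity of pole orders, finite rank of the Laurent coefficients of the product, $\pi_\C S\subseteq \pi_\C R\cup\pi_\C Q$) are precisely the routine checks needed, and none of them fails.
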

\nt The interpretation of the multiplication in Theorem \ref{11.T.mmu} is $z$-wise, first for all $z\in \C\setminus (\pi _\C R\cup \pi _\C Q);$ then this product extends across $\pi _\C R\cup \pi _\C Q$ to an element of $M^{\mu +\nu }_S(X).$ In particular, if $f\in M^{-\infty } _R(X)$ or $f\in M^{-\infty } _Q(X),$ then $fg\in M^{-\infty } _S(X).$
\begin{thm}\label{11.T.minv} \textup{(}cf. \cite[Section 2.4.3]{Schu20}\textup{)}
For every $m\in M^{-\infty } _R(X)$ for some Mellin asymptotic type $R$ there exists an $l\in M^{-\infty } _S(X)$ for another Mellin asymptotic type $S$ such that
\beq\label{11.E.meinv}
(1+m)^{-1}=1+l,
\eeq
i.e. $(1+m)(1+l)=(1+l)(1+m)=1$ in the sense of the multiplication of the preceding theorem.
\end{thm}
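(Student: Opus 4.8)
The plan is to construct $l$ directly via a Neumann-type series and then show that the formal solution actually lies in $M^{-\infty}_S(X)$ for a suitable resulting Mellin asymptotic type $S$. First I would work $z$-wise: for every $z\in\C\setminus\pi_\C R$ the operator $m(z)$ lies in $L^{-\infty}(X)$ and is of finite rank, so $1+m(z)$ is a Fredholm operator of index zero on each $H^s(X)$; at points where $1+m(z)$ is invertible the inverse is again of the form $1+l(z)$ with $l(z):=-(1+m(z))^{-1}m(z)=-m(z)+m(z)^2-\cdots\in L^{-\infty}(X)$ of finite rank. Hence the candidate $l$ is defined and holomorphic on $\C$ minus the (at most discrete) set $N$ of $z$ where $1+m(z)$ fails to be invertible, which by analytic Fredholm theory (the complex $\C$ is connected and $1+m$ is invertible at least on some $\Gamma_\beta$ for $|\beta|$ large, since $m|_{\Gamma_\beta}\to 0$ there in $L^{-\infty}(X;\Gamma_\beta)$) is a discrete subset of $\C$ with the finite-meromorphy property: near each $z_0\in N$ the resolvent $(1+m(z))^{-1}$ has a pole whose principal part has finite-rank Laurent coefficients in $L^{-\infty}(X)$.

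Next I would record the growth/decay behaviour. For every $\beta$ with $\Gamma_\beta\cap N=\emptyset$ and any $\pi_\C$-excision function $\chi$ cutting out $N$, one has $\chi l|_{\Gamma_\beta}\in L^{-\infty}(X;\Gamma_\beta)$, uniformly on finite $\beta$-intervals: this follows by writing $l=-m+m(1+l)m$ (or simply $l=-m(1+m)^{-1}$) on $\Gamma_\beta$, using that $m|_{\Gamma_\beta}\in L^{-\infty}(X;\Gamma_\beta)$ uniformly on finite $\beta$-intervals together with a uniform bound for $\|(1+m(z))^{-1}\|$ on such strips away from $N$; the latter bound comes from the fact that $\|m(z)\|_{L^{-\infty}}$ decays as $|\mathrm{Im}\,z|\to\infty$ on vertical strips, so $1+m(z)$ is invertible with uniformly bounded inverse outside a compact region, and on the remaining compact region one uses continuity and compactness, having excised $N$. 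Collecting the poles $z_j$ of $l$ together with their pole orders $n_j'$ defines the Mellin asymptotic type $S$, and $|\mathrm{Re}\,z_j|\to\infty$ is inherited from the corresponding property of $R$ because $N$ can only accumulate where $m$ itself is singular, i.e.\ at $\pi_\C R$.

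Finally I would verify the algebraic identity $(1+m)(1+l)=(1+l)(1+m)=1$: it holds $z$-wise on $\C\setminus(\pi_\C R\cup N)$ by construction, and both sides extend across $\pi_\C R\cup N$ to elements of $M^0_T(X)$ for some $T$ by Theorem \ref{11.T.mmu}, so the extensions agree with $1$ there as well; since $m\in M^{-\infty}_R(X)$ and $l\in M^{-\infty}_S(X)$, Theorem \ref{11.T.mmu} also gives $ml,\,lm\in M^{-\infty}_T(X)$, which is the content of \eqref{11.E.meinv}. The main obstacle is the second step: one must pass from pointwise invertibility to \emph{uniform} operator-norm control of $(1+m(z))^{-1}$ on vertical strips with $N$ excised, in order to land back in the symbol class $M^{-\infty}_S(X)$ rather than merely in a space of meromorphic $L^{-\infty}(X)$-valued functions; this is where the uniform decay of $m|_{\Gamma_\beta}$ in $\beta$-intervals and a careful treatment near $N$ via the finite-rank structure of the principal parts are essential.
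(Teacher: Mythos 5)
The paper itself offers no proof of Theorem \ref{11.T.minv}; it is quoted from \cite[Section 2.4.3]{Schu20}. Your strategy --- pointwise inversion, analytic Fredholm theory to control the singular set, and then uniform estimates on vertical strips to land back in $M^{-\infty}_S(X)$ --- is exactly the standard route, and the third and fourth steps (the uniform bound on $(1+m(z))^{-1}$ away from the singular set via Schwartz decay of $m$ on $\Gamma_\beta$ plus compactness, and the extension of the identity $(1+m)(1+l)=1$ across the singular set via Theorem \ref{11.T.mmu}) are correctly identified and correctly executed.

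There is, however, one genuine gap. Analytic Fredholm theory applied to the \emph{holomorphic} family $1+m(z)$ on the connected open set $\C\setminus\pi_\C R$ only gives that the non-invertibility set $N$ is discrete \emph{in $\C\setminus\pi_\C R$}; your closing remark that ``$N$ can only accumulate at $\pi_\C R$'' concedes precisely the scenario that must be excluded. If $N$ accumulated at some $r_j\in\pi_\C R$, then the pole set of $l$ would have a finite accumulation point, so $S$ would fail to be a Mellin asymptotic type in the sense of \eqref{11.E.mels} (only finitely many poles in each strip $\{c\leq\textup{Re}\,z\leq c'\}$). To close this you must treat $1+m$ as a \emph{finitely meromorphic} Fredholm family on all of $\C$ (finite-rank Laurent coefficients at each $r_j$) and invoke the meromorphic Fredholm / Gohberg--Sigal theorem \cite{Gohb3}: since the family is invertible at one point, its inverse is again finitely meromorphic on all of $\C$ with pole set discrete in $\C$ and finite-rank principal parts. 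A second, more minor, inaccuracy: for $z\notin\pi_\C R$ the operator $m(z)$ lies in $L^{-\infty}(X)$ but is \emph{not} of finite rank in general --- only the Laurent coefficients at the poles are finite rank. This does not derail the argument, since compactness of $m(z)$ on each $H^s(X)$ already gives that $1+m(z)$ is Fredholm of index zero and that $l(z)=-m(z)(1+m(z))^{-1}$ is smoothing, but the finite-rank claim should be dropped.
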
 

\nt More generally we have the following result.
\begin{thm}\label{11.T.minverse}
Let $f\in M^\mu _R(X)$ and assume that $f|_{\Gamma _\beta }\in L^\mu _{\textup{cl}}(X,\Gamma _\beta )$ is parameter-dependent elliptic. Then we have $f^{-1}\in M^{-\mu }_S(X)$ for some Mellin asymptotic type $S.$
\end{thm}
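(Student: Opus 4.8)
The plan is to reduce the claim to the two already-available facts: first, Theorem~\ref{11.T.minv}, which inverts $1+m$ for $m\in M^{-\infty}_R(X)$; and second, Theorem~\ref{11.T.cut} (together with Remark~\ref{11.R.mdec}), which lets one trade an arbitrary parameter-dependent symbol on a line $\Gamma_\beta$ for a holomorphic Mellin symbol modulo smoothing. The guiding idea is that parameter-dependent ellipticity of $f|_{\Gamma_\beta}$ produces, on \emph{that} line, a parametrix $p_\beta\in L^{-\mu}_{\textup{cl}}(X;\Gamma_\beta)$ with $p_\beta f|_{\Gamma_\beta}-1$ and $f|_{\Gamma_\beta}\,p_\beta-1$ smoothing; one then upgrades $p_\beta$ to a holomorphic Mellin symbol and corrects the smoothing remainder by a Neumann-type argument, which is exactly where Theorem~\ref{11.T.minv} enters. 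Finally one argues that the inverse so obtained on $\Gamma_\beta$ extends meromorphically to all of $\C$ with the structure of an element of $M^{-\mu}_S(X)$.

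First I would fix a $\beta\in\R$ with $\pi_\C R\cap\Gamma_\beta=\emptyset$, so that $f|_{\Gamma_\beta}\in L^\mu_{\textup{cl}}(X;\Gamma_\beta)$ makes sense and is parameter-dependent elliptic by hypothesis. By the standard parameter-dependent elliptic calculus on the closed compact manifold $X$ there is $b(z)\in L^{-\mu}_{\textup{cl}}(X;\Gamma_\beta)$ with $b\,f|_{\Gamma_\beta}=1-c_\ell$ and $f|_{\Gamma_\beta}\,b=1-c_r$ where $c_\ell,c_r\in L^{-\infty}(X;\Gamma_\beta)$. Next, apply Theorem~\ref{11.T.cut} to $b$: there is $h\in M^{-\mu}_\Ocal(X)$ with $b=h|_{\Gamma_\beta}\bmod L^{-\infty}(X;\Gamma_\beta)$. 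Then $h\,f\in M^0_{R'}(X)$ for a resulting Mellin asymptotic type $R'$ by Theorem~\ref{11.T.mmu}, and its restriction to $\Gamma_\beta$ differs from $1$ by a smoothing remainder. That remainder need not yet be of the form $M^{-\infty}_{R'}(X)$ as an operator family holomorphic off a discrete set, so I would write $h\,f=1+m_0+m_1$ where, using Remark~\ref{11.R.mdec} and the fact that $(hf-1)|_{\Gamma_\beta}$ is smoothing, $m_0\in M^{-\infty}_{R'}(X)$ and $m_1\in M^{-\infty}_\Ocal(X)$ with $m_1|_{\Gamma_\beta}$ arbitrarily small in the relevant Fr\'echet seminorms (shrinking a cutoff); more cleanly, one absorbs both into a single $m\in M^{-\infty}_{S_0}(X)$ after noting $hf-1\in M^{0}_{S_0}(X)$ has order $-\infty$ on $\Gamma_\beta$, hence lies in $M^{-\infty}_{S_0}(X)$ by Remark~\ref{11.R.ord}. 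Thus $hf=1+m$ with $m\in M^{-\infty}_{S_0}(X)$.

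Now Theorem~\ref{11.T.minv} gives $l\in M^{-\infty}_{S_1}(X)$ with $(1+m)^{-1}=1+l$, and hence $(1+l)h$ is a left inverse of $f$ in the sense of $z$-wise multiplication; by Theorems~\ref{11.T.mmu} and~\ref{11.T.minv} it lies in $M^{-\mu}_{S}(X)$ for a resulting $S$. Running the same argument on the right produces a right inverse in $M^{-\mu}_{S'}(X)$; since $z$-wise these agree wherever everything is defined, the two coincide as meromorphic families and $f^{-1}=(1+l)h\in M^{-\mu}_S(X)$. Here I use that the $z$-wise product $f^{-1}\cdot f=1$, valid on the complement of a discrete set, forces $f^{-1}$ to have the claimed meromorphic structure with Laurent coefficients of finite rank — this is the content of the extension statement following Theorem~\ref{11.T.mmu}.

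The main obstacle I anticipate is bookkeeping of the Mellin asymptotic types and, more substantively, verifying that the elliptic parametrix $b$ on the \emph{single} line $\Gamma_\beta$ can be promoted, via Theorem~\ref{11.T.cut}, to a holomorphic $h$ whose product $hf$ is smoothing \emph{on $\Gamma_\beta$} — the point being that Theorem~\ref{11.T.cut} only controls $h$ on $\Gamma_\beta$ modulo $L^{-\infty}$, while $hf-1$ a priori is merely an element of $M^0_{S_0}(X)$, so one must invoke Remark~\ref{11.R.ord} (order reduction on a pole-free line) to land in $M^{-\infty}_{S_0}(X)$ before Theorem~\ref{11.T.minv} is applicable. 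Once that reduction is in place, the rest is a formal Neumann-series inversion packaged by the cited theorems.
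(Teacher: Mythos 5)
Your proposal is correct and follows essentially the same route as the paper's proof: parameter-dependent parametrix on $\Gamma_\beta$, kernel cut-off via Theorem~\ref{11.T.cut} to obtain a holomorphic $h\in M^{-\mu}_{\mathcal{O}}(X)$, the identification $hf=1+m$ with $m\in M^{-\infty}_{S_0}(X)$, and inversion of $1+m$ by Theorem~\ref{11.T.minv}. In fact you supply slightly more detail than the paper does at the step where $hf-1$ is upgraded from \virg{smoothing on $\Gamma_\beta$} to an element of $M^{-\infty}_{S_0}(X)$ (via iterated use of Remark~\ref{11.R.ord}) and in reconciling the left and right inverses.
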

\begin{proof}
Let $g_\beta \in L^{-\mu} _{\textup{cl}}(X,\Gamma _\beta )$ be a parameter-dependent parametrix of $f|_{\Gamma _\beta }.$ Applying Theorem \ref{11.T.cut} for $l=0$ we find an $h\in M^{-\mu }_{\mathcal{O}}(X)$ such that $g_\beta=h|_{\Gamma _\beta }$ modulo $L^{-\infty }(X;\Gamma _\beta ).$ It follows that
$$(hf)|_{\Gamma _\beta}=1+r_\beta \,\,\,\mbox{for some}\,\,\,r_\beta \in L^{-\infty }(X;\Gamma _\beta ).$$
However, then we know that $hf=1+m$ for some $m\in M^{-\infty } _R(X).$ From Theorem \ref{11.T.minv} it follows that $(1+l)hf=1,$ i.e. $(1+l)h=f^{-1}$ is as claimed.
\end{proof}
 \nt A smoothing Mellin operator in the cone calculus associated with the weight data ${\bf{g}}:=(\gamma ,\gamma -\mu ,\Theta )$ for $\Theta =(-(k+1),0],k\in \N,$ is defined as
\beq\label{11.E.smme}
M:= r^{-\mu }\sum_{j=0}^k\omega r^j\op_M^{\gamma_j -n/2}(f_j)\omega '
\eeq
for cut-off functions $\omega ,\omega ',$ elements $f_j\in M^{-\infty }_{R_j}(X)$ for Mellin asymptotic types $R_j,$ and weights $\gamma _j$ with $\gamma -j\leq \gamma _j\leq \gamma $ and $\pi _\C R_j\cap \Gamma _{(n+1)/2-\gamma _j}=\emptyset$ for all $j.$ Let
$$L_{M+G}(M,{\bf{g}})\,\,\mbox{for}\,\,{\bf{g}}:= (\gamma ,\gamma -\mu ,\Theta ),\,\Theta =(-(k+1),0],$$
denote the space of all operators $M+G$ for arbitrary $G\in L_G(M,{\bf{g}})$ and operators \eqref{11.E.smme} for arbitrary $f_j,\,j=0,\dots,k,$ for any fixed choice of cut-off functions $\omega ,\omega '.$\\\\
Now we have all ingredients of \eqref{11.E.conedec}. Let
\beq\label{11.E.conmme}
L^\mu (M,{\bf{g}})\,\,\,\mbox{for}\,\,\,{\bf{g}}=(\gamma ,\gamma -\mu ,\Theta ),\,\Theta =(-(k+1),0],
\eeq
denote the space of all operators of the form \eqref{11.E.conedec} for arbitrary $h(r,z)\in C^\infty (\overline{\R}_+,\newline M^\mu _{\mathcal{O}}(X)),$ $A_{\textup{int}}\in L^\mu _{\textup{cl}}(s_0(M)),$ and $M+G\in L_{M+G}(M,{\bf{g}}).$ For $\Theta =(-\infty ,0]$ we set $L^\mu (M,{\bf{g}}):=\bigcap _{k\in \N}L^\mu (M,(\gamma ,\gamma -\mu ,(-(k+1),0])).$ These spaces constitute the cone algebra over the compact manifold $M$ with conical singularity $v.$ The cone algebra in this form has been introduced in \cite{Remp7}.
\begin{rem}
The space $\textup{Diff}^\mu _{\textup{deg}}(M)$ belongs to $L^\mu (M,{\bf{g}})$ for every weight $\gamma $ and $\Theta =(-\infty ,0].$
\end{rem}
\begin{rem}\label{11.R.sm}
We have
\beq\label{11.E.sm}
L^\mu (M,{\bf{g}})\cap L^{-\infty }(s_0(M))=L_{M+G}(M,{\bf{g}}).
\eeq 
\end{rem} 
\nt In fact, if $A$ belongs to the left hand side of \eqref{11.E.sm} then all parameter-dependent homogeneous components of the local (over $X$) symbol of $h(r,z)\in C^{\infty }(\overline{\R}_+,L^{\mu }_{\textup{cl}}(X;\newline\Gamma _{(n+1)/2-\gamma }))$ vanish for $r>0.$ Since those are smooth up to $r=0$ they vanish up to $0$, and it follows that $h(r,z)\in C^{\infty }(\overline{\R}_+,L^{-\infty  }(X;\Gamma _{(n+1)/2-\gamma })),$ more precisely, $h(r,z)\in C^{\infty }(\overline{\R}_+,M_{\mathcal{O}}^{-\infty }(X)),$ and for such an $h$ we have $\omega r^{-\mu }\op_M^{\gamma -n/2}(h)\omega '\in L_{M+G}(M,{\bf{g}}).$ 
\nt Every $A\in L^\mu (M,{\bf{g}})$ induces continuous operators 
\beq\label{11.E.conta}
A:H^{s,\gamma }(M)\rightarrow H^{s-\mu ,\gamma -\mu }(M),\,\, H_P^{s,\gamma }(M)\rightarrow H_Q^{s-\mu ,\gamma -\mu }(M)
\eeq
for every $s\in \R$ and every asymptotic type $P$ associated with the weight data $(\gamma  ,\Theta )$ for some resulting $Q$ associated with the weight data $(\gamma -\mu  ,\Theta ).$ 
More details may be found, in \cite{Schu2}, see also \cite{Schu20}. \\

\nt The composition refers to a bilinear operation
\beq\label{11.E.sccom}
L^\mu (M,(\gamma -\nu,\gamma -(\mu +\nu),\Theta ))\times L^\nu  (M,(\gamma ,\gamma -\nu ,\Theta ))\rightarrow L^{\mu +\nu } (M,(\gamma,\gamma -(\mu +\nu ),\Theta ))
\eeq
Recall, in particular, that the spaces
\beq\label{11.E.conm+g}
L _{M+G}(M,{\bf{g}})\,\,\,\mbox{and}\,\,\,L _G(M,{\bf{g}})
\eeq
of smoothing Mellin plus Green operators $M+G,$ and $G,$ respectively, form subalgebras, where $AB$ belongs to $L_{M+G}\,\,(L _G)$ as soon as $A$ or $B$ belongs to $L_{M+G}\,\,(L _G).$ We have also a formal adjoint
\beq\label{11.E.sccomad}
L^\mu (M,(\gamma ,\gamma -\mu ,\Theta ))\rightarrow L^\mu (M,( -\gamma +\mu ,-\gamma,\Theta ))
\eeq
defined by $(Au,v)_{H^{0,0}(M)}=(u,A^*v)_{H^{0,0}(M)}$ for all $u,v\in C_0^\infty (M\setminus \{v\}),$ and $L_{M+G}\,\,(L _G)$ are preserved under formal adjoints.\\

\nt Let us now express the principal symbolic structure $\sigma (A)=(\sigma _0(A),\sigma _1(A))$ of operators $A\in L^\mu (M,(\gamma ,\gamma -\mu ,\Theta )).$ The relation 
$$L^\mu (M,(\gamma ,\gamma -\mu ,\Theta ))\subset L^\mu _{\textup{cl}}(M\setminus \{v\})$$
gives rise to the standard homogeneous principal symbol $\sigma _0(A)$ of order $\mu  $ of the operator as an invariantly defined function in $C^\infty (T^*(s_0(M))\setminus 0),s_0(M)=M\setminus \{v\}.$ In addition, as noted in the introduction we have the rescaled principal symbol \eqref{i.E.redy} close to $s_1(M)=v.$ The principal conormal symbol $\sigma _1(A)$ is defined as the operator function
\beq\label{11.E.sccono}
\sigma _1(A)(z):=h(0,z)+f_0(z):H^s(X)\rightarrow H^{s-\mu }(X)
\eeq
for $z\in \Gamma _{(n+1)/2-\gamma },$ cf. the notation in connection with \eqref{11.E.conedec} and \eqref{11.E.smme}. The behaviour of $\sigma (\cdot)$ under compositions $(A,B)\rightarrow AB$ in the notation of \eqref{11.E.sccom} is as follows:
\beq\label{11.E.sycom}
\sigma _0(AB)=\sigma _0(A)\sigma _0(B),\,\,\sigma _1(AB)=(T^\nu \sigma _1(A))\sigma _1(B)
\eeq
where $(T^\beta f)(z)=f(z+\beta ).$

\nt Let us also recall the Mellin translation product, cf. \cite{Schu20}, referring to the lower order conormal symbols
$$(\sigma _1^{\mu -j}(A)(z))_{j=0,1,2,\dots}\,\,\,\mbox{for}\,\,\,\sigma _1^{\mu -j}(A)(z):=1/j!(\partial ^j_rh)(0,z)+f_j(z),$$ 
$\sigma _1^\mu (A)(z):=\sigma _1 (A)(z).$
\begin{rem}\label{11.R.mtransl}
We have
\beq
\sigma _1^{\mu +\nu -l}(AB)=\sum_{i+j=l}(T^{\nu -j}\sigma _1^{\mu -i}(A))\sigma _1^{\nu -j}(B),
\eeq
$l=0,1,2,\dots.$
\end{rem}
\subsection{Ellipticity in the cone algebra} 
\begin{defn}\label{12.D.ell}
Let $A\in L^\mu (M,{\bf{g}}), {\bf{g}}=(\gamma ,\gamma -\mu ,\Theta ).$
\begin{itemize}
\item[\textup{(i)}] The operator $A$ is called $\sigma _0$-elliptic, if $\sigma _0(A)$ never vanishes on $T^*(s_0(M))\setminus 0,$ and if in addition close to $s_1(M)$ we have $\tilde{\sigma }_0(A)(r,x,\rho ,\xi )\neq 0$ for $(\rho ,\xi  )\neq 0,$ up to $r=0.$
\item[\textup{(ii)}] The operator $A$ is called $\sigma _1$-elliptic if \eqref{11.E.sccono} is a family of isomorphisms for all $z\in \Gamma _{(n+1)/2-\gamma }$ for any $s\in \R.$
\end{itemize}
We call $A$ elliptic if it satisfies the conditions $\textup{(i)},\textup{(ii)}.$
\end{defn}
\begin{rem}\label{12.R.p}
If $A$ is elliptic we find a $\tilde{p}(r,\tilde{\rho })\in C^\infty (\overline{\R}_+,L^\mu _{\textup{cl}}(X;\R_{\tilde{\rho }})$ with values in parameter-dependent elliptic operators on $X$ (with parameter $\tilde{\rho }$) such that $h(r,z)\in C^\infty (\overline{\R}_+,M^\mu _{\mathcal{O}}(X))$ in the representation \eqref{11.E.conedec}
is associated with $\tilde{p}$ via Theorem \ref{11.T.melconv}.
\end{rem}
\begin{rem}\label{12.R.coel}
It may be difficult to explicitly check the condition \textup{(ii)} of Definition \ref{12.D.ell}; the non-bijectivity points of \eqref{11.E.sccono} in the complex plane are a kind of non-linear eigenvalues of $\sigma _1(A)$ as operators globally on $X.$ Often in applications we have $A\in \textup{Diff}^\mu _{\textup{deg}}(M),$ and $X$ may have several connected components $X_1,\dots,X_N.$ In this case, because of the locality of differential operators, $\sigma _1(A)(z):H^s(X)\rightarrow H^{s-\mu }(X)$ is bijective if and only if $\sigma _1(A)(z):H^s(X_j)\rightarrow H^{s-\mu }(X_j)$ is bijective for every $j=1,\dots,N.$
\end{rem}
\begin{thm}\label{12.T.par}
Let $M$ be a manifold with conical singularities. An elliptic operator $A\in L^\mu (M,{\bf{g}})$ for ${\bf{g}}=(\gamma ,\gamma -\mu ,\Theta )$ has a parametrix $P\in L^{-\mu} (M,{\bf{g}}^{-1})$ for ${\bf{g}}^{-1}=(\gamma -\mu ,\gamma ,\Theta )$ in the sense that $PA-1\in L_G(M,{\bf{g}}_l),AP-1\in L_G(M,{\bf{g}}_r)$ for ${\bf{g}}_l=(\gamma ,\gamma ,\Theta ),{\bf{g}}_r=(\gamma -\mu, \gamma -\mu, \Theta ).$
\end{thm}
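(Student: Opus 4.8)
The plan is to construct $P$ componentwise, following the classical scheme of inverting the principal symbols successively and correcting the remainders by Green operators. \textbf{Step 1: the interior parametrix and the holomorphic Mellin inversion.} By Remark \ref{12.R.p} we may write $A=\omega r^{-\mu}\op_M^{\gamma-n/2}(h)\omega'+\chi A_{\textup{int}}\chi'+M+G$ with $h$ associated via Theorem \ref{11.T.melconv} to a parameter-dependent elliptic $\tilde p(r,\tilde\rho)\in C^\infty(\overline\R_+,L^\mu_{\textup{cl}}(X;\R_{\tilde\rho}))$. Let $\tilde q(r,\tilde\rho)$ be a parameter-dependent parametrix of $\tilde p$; then $\tilde q$ generates, again by Theorem \ref{11.T.melconv}, an $h^{(-1)}(r,z)\in C^\infty(\overline\R_+,M^{-\mu}_{\mathcal O}(X))$, and $\sigma_0$-ellipticity of $A$ also furnishes an ordinary parametrix $P_{\textup{int}}\in L^{-\mu}_{\textup{cl}}(s_0(M))$ of $A_{\textup{int}}$ away from $v$. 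Setting
\[
P_0:=\omega r^{\mu}\op_M^{\gamma-\mu-n/2}(h^{(-1)})\omega'+\chi P_{\textup{int}}\chi'
\]
(with suitably nested cut-off functions) defines an element of $L^{-\mu}(M,{\bf g}^{-1})$ whose interior and rescaled symbols invert those of $A$, so that $C_0:=1-P_0A$ and $C_0':=1-AP_0$ have vanishing $\sigma_0$ and hence lie in $L^{-1}(M,\cdot)$; the off-diagonal cut-off overlaps produce only smoothing contributions, which by Remark \ref{11.R.sm} belong to $L_{M+G}$.

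\textbf{Step 2: correcting the conormal symbol.} The obstruction to $C_0\in L_{M+G}$ being a \emph{Green} operator is its conormal symbol $\sigma_1(C_0)(z)=1-\sigma_1(P_0)(z)\sigma_1(A)(z)$, which by \eqref{11.E.sycom} equals $1-(T^{\mu}h^{(-1)})(0,z)(h(0,z)+f_0(z))$. Now $\sigma_1(A)(z)=h(0,z)+f_0(z)$ is, by Remark \ref{11.R.ord} and the construction, an element of $M^\mu_R(X)$ for a suitable Mellin asymptotic type $R$, and $\sigma_1$-ellipticity says precisely that its restriction to $\Gamma_{(n+1)/2-\gamma}$ is invertible; combined with parameter-dependent ellipticity this puts us in the situation of Theorem \ref{11.T.minverse}, so $\sigma_1(A)^{-1}\in M^{-\mu}_S(X)$ for some $S$. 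Replacing $h^{(-1)}(0,z)$ by this meromorphic inverse — i.e. adding to $P_0$ a smoothing Mellin operator of the form \eqref{11.E.smme} with $f_0\in M^{-\infty}_{S'}(X)$, choosing the weights $\gamma_j$ so that the relevant Mellin lines avoid the poles (possible for $\gamma$ off the discrete exceptional set) — yields $P_1\in L^{-\mu}(M,{\bf g}^{-1})$ with $\sigma_1(1-P_1A)=0$ and $\sigma_1(1-AP_1)=0$, hence $1-P_1A, 1-AP_1\in L^{-1}(M,\cdot)$ with trivial conormal symbol.

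\textbf{Step 3: formal Neumann series and asymptotic summation.} With $D:=1-P_1A\in L^{-1}(M,{\bf g}_l)$ we want $P\sim(\sum_{k\ge 0}D^k)P_1$. The composition \eqref{11.E.sccom} shows $D^k\in L^{-k}(M,\cdot)$, so the orders tend to $-\infty$; by the usual asymptotic-summation argument in the cone algebra there is $E\in L^0(M,{\bf g}_l)$ with $E-\sum_{k=0}^{N}D^k\in L^{-(N+1)}(M,\cdot)$ for all $N$, and then $P:=EP_1$ satisfies $1-PA\in\bigcap_N L^{-(N+1)}(M,{\bf g}_l)=L^{-\infty}(M,{\bf g}_l)$. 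Since $1-PA$ also has vanishing conormal symbol at every level — this is inherited from $D$ through the Mellin translation product, Remark \ref{11.R.mtransl} — Remark \ref{11.R.sm} together with the characterisation of Green operators as the smoothing elements with trivial iterated conormal symbols gives $1-PA\in L_G(M,{\bf g}_l)$; the same argument from the right, or a short formal-adjoint argument using \eqref{11.E.sccomad}, handles $1-AP\in L_G(M,{\bf g}_r)$, and a standard comparison shows the left and right parametrices coincide modulo $L_G$, so $P\in L^{-\mu}(M,{\bf g}^{-1})$ is the desired two-sided parametrix.

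\textbf{Main obstacle.} The delicate point is Step 2: one must ensure that the meromorphic inverse $\sigma_1(A)^{-1}$ and all the lower-order Mellin corrections can be realised as honest smoothing Mellin operators \eqref{11.E.smme} with admissible weight shifts $\gamma-j\le\gamma_j\le\gamma$ and excision of poles, \emph{and} that the resulting remainders genuinely have trivial conormal symbols to all orders so that they are Green and not merely smoothing Mellin. This is exactly where the hypothesis that $\gamma$ avoids the discrete set determined by the poles of $\sigma_{\textup{c}}(A)$ is used, and where one leans on Theorems \ref{11.T.mmu}, \ref{11.T.minv} and \ref{11.T.minverse} to keep everything inside the Mellin symbol classes under composition and inversion.
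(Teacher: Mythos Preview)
Your overall architecture --- interior/holomorphic-Mellin inversion, then a smoothing Mellin correction of the principal conormal symbol via Theorem \ref{11.T.minverse}, then a Neumann-series step --- matches the paper's proof. Two points are worth sharpening.

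\textbf{Step 1.} Your phrasing ``vanishing $\sigma_0$, hence in $L^{-1}$'' undersells what you actually have. The Leibniz inversion of $\tilde p$ makes $P_0$ a \emph{full} parametrix of $A$ over $s_0(M)$, so $1-P_0A\in L^0(M,{\bf g}_l)\cap L^{-\infty}(s_0(M))$, and Remark \ref{11.R.sm} then gives $1-P_0A\in L_{M+G}(M,{\bf g}_l)$ directly. This is the paper's argument; you do not need a separate ``$L^{-1}$'' step at all.

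\textbf{Step 3.} Here you diverge from the paper. You form an \emph{infinite} asymptotic sum $E\sim\sum_{k\ge 0}D^k$ and then argue that the remainder lies in $L_G$ by invoking a characterisation of Green operators as ``smoothing elements with trivial iterated conormal symbols''. That characterisation is not stated in Section~1, and asymptotic summation in the cone algebra is not set up there either, so as written this step leans on machinery outside the paper. The paper's route is shorter and avoids both issues: for $\Theta=(-(k+1),0]$ one observes that $C:=P_1A-1\in L_{M+G}(M,{\bf g}_l)$ with $\sigma_1(C)=0$ already forces $C^{k+1}\in L_G$ (each factor contributes an extra $r$-power in the Mellin part), so the \emph{finite} sum $\sum_{j=0}^{k}(-1)^jC^j$ satisfies $\bigl(\sum_{j=0}^{k}(-1)^jC^j\bigr)(1+C)=1+G$ with $G\in L_G(M,{\bf g}_l)$, and $P:=\bigl(\sum_{j=0}^{k}(-1)^jC^j\bigr)P_1$ is the desired left parametrix. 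Your infinite Neumann series would eventually give the same answer, but the finite sum is what makes the Green remainder transparent without extra structure theorems.
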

\begin{proof}
Applying Remark \ref{12.R.p} we choose a $\tilde{p}(r,\tilde{\rho })\in C^\infty (\overline{\R}_+,L^\mu _{\textup{cl}}(X;\R_{\tilde{\rho }}))$ with values in parameter-dependent elliptic operators, and set $p(r,\rho ):=\tilde{p}(r,r\rho )$. Near the conical point in the spitting of variables $(r,x)\in X^\wedge$\,\, we have $\omega A\omega '=\omega r^{-\mu }\Op_r(p)\omega '$ modulo $L^{-\infty }(X^\wedge).$ Let us choose an operator function $\tilde{p}^{(-1)}(r,\tilde{\rho })\in C^\infty (\overline{\R}_+,L^{-\mu }_{\textup{cl}} (X;\R_{\tilde{\rho }}))$ such that
$p^{(-1)}(r,\rho ):=\tilde{p}^{(-1)}(r,r\rho )$ satisfies the relations
$$r^\mu p^{(-1)}(r,\rho )\sharp _r r^{-\mu }p(r,\rho )\sim 1,\,\,r^{-\mu }p(r,\rho )\sharp _r r^\mu p^{(-1)}(r,\rho )\sim 1$$
where $\sharp _r$ means the Leibniz product between the respective symbols (cf. also Lemma \ref{23.L.p1} below). By virtue of Theorem \ref{11.T.melconv} there is an $$h^{(-1)}(r,z)\in C^\infty (\overline{\R}_+, M^{-\mu }_{\mathcal{O}}(X))$$ such that $\omega r^\mu \Op_r(p^{(-1)})\omega '=\omega r^\mu \op_M^{\gamma -\mu -n/2}(h^{(-1)})\omega '$ modulo a smoothing remainder.\\
\nt Without loss of generality in the representation \eqref{11.E.conedec} we may assume that $A_{\textup{int}}$ coincides with $A.$ Then $A_{\textup{int}}$ is elliptic on $s_0(M )$ in the standard sense, and we find a parametrix $A_{\textup{int}}^{(-1)}\in L^{-\mu }_{\textup{cl}}(s_0(M )).$ Now we set $A^{(-1)}:=r^\mu \omega \op_M^{\gamma -\mu -n/2}(h^{(-1)})\omega '+\chi A_{\textup{int}}^{(-1)}\chi '.$ Then we have $A^{(-1)}\in L^{-\mu }(M,{\bf{g}}^{-1})$ and $A^{(-1)}A-1\in L^0(M,{\bf{g}}_l)\cap L^{-\infty }(s_0(M)).$ Thus by virtue of Remark \ref{11.R.sm} we have $A^{(-1)}A-1\in L_{M+G}(M,{\bf{g}}_l).$ Next we construct an operator $B\in L_{M+G}(M,{\bf{g}}_l)$ such that
\beq\label{12.E.pmB} 
(A^{(-1)}+B)A=1 \,\,\textup{mod}\,\, L_G(M,{\bf{g}}_l).
\eeq
 We first find an $l\in M_R^{-\infty }(X)$ for some Mellin asymptotic type $R$ such that 
\beq
(h^{-1}(0,z+\mu )+l(z+\mu ))(h(0,z)+f_0(z))=1,
\eeq
cf. the notation in \eqref{11.E.smme}. We use the fact that $(h(0,z)+f_0(z))^{-1}=h^{-1}(0,z+\mu )+m(z+\mu )$ for some $m(z)\in M_S^{-\infty }(X),$ cf. Theorem \ref{11.T.minverse}. Thus it suffices to set $l(z)=m(z).$ For $B_0:= r^\mu \omega \op_M^{\gamma -\mu -n/2}(l)\omega '$ we obtain 
$$(A^{(-1)}+B_0)A=1+C\,\,\,\mbox{for a}\,\,\,C\in L_{M+G}(M,{\bf{g}}_l)$$
and $\sigma _1(C)=0$. Now we use the fact that $(\sum_{j=0}^k(-1)^jC^j)(1+C)=1+G$ for some $G\in L_G(M,{\bf{g}}).$ It follows that
\beq\label{12.E.pm}
(\sum_{j=0}^k(-1)^jC^j)(A^{(-1)}+B_0)=:P
\eeq
is a left parametrix of $A.$ In other words, we have constructed $B:=(\sum_{j=0}^k(-1)^jC^j)\newline B_0+(\sum_{j=1}^k(-1)^jC^j)A^{(-1)}$ mentioned before in \eqref{12.E.pmB}. In an analogous manner we obtain a right parametrix with a left-over term in $L_G(M,{\bf{g}}_r).$ Thus for simple algebraic reasons \eqref{12.E.pm} is a two-sided parametrix.
\end{proof}
\begin{thm}\label{13.T.as}
Let $A\in L^\mu (M,{\bf{g}})$ for ${\bf{g}}=(\gamma ,\gamma -\mu ,\Theta )$ be elliptic. Then $Au=f$ for $u\in H^{-\infty ,\gamma }(M)$ and  $f\in H^{s-\mu  ,\gamma -\mu }(M),s\in \R,$ entails $u\in H^{s ,\gamma }(M).$ If in addition $f\in H_S^{s-\mu  ,\gamma -\mu }(M)$ for an asymptotic type $S$ associated with $(\gamma -\mu ,\Theta )$ it follows that $u\in H_R^{s ,\gamma }(M)$ for a resulting asymptotic type $R$ associated with $(\gamma ,\Theta ).$
\end{thm}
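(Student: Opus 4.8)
The plan is to deduce Theorem~\ref{13.T.as} directly from the existence of a parametrix established in Theorem~\ref{12.T.par}, together with the mapping properties \eqref{11.E.conta} of operators in the cone algebra. First I would invoke Theorem~\ref{12.T.par} to obtain $P\in L^{-\mu}(M,{\bf{g}}^{-1})$ with ${\bf{g}}^{-1}=(\gamma-\mu,\gamma,\Theta)$ such that $G_l:=PA-1\in L_G(M,{\bf{g}}_l)$ and $G_r:=AP-1\in L_G(M,{\bf{g}}_r)$. Applying $P$ to the equation $Au=f$ gives $u=PAu-G_lu=Pf-G_lu$. Since $f\in H^{s-\mu,\gamma-\mu}(M)$ and $P\in L^{-\mu}(M,{\bf{g}}^{-1})$ maps $H^{s-\mu,\gamma-\mu}(M)\to H^{s,\gamma}(M)$ by \eqref{11.E.conta}, we have $Pf\in H^{s,\gamma}(M)$. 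For the remainder term, $G_l\in L_G(M,{\bf{g}}_l)$ with ${\bf{g}}_l=(\gamma,\gamma,\Theta)$ is a Green operator, hence by \eqref{11.E.green} it maps $H^{-\infty,\gamma}(M)$ into $H_Q^{\infty,\gamma}(M)\subset H^{s,\gamma}(M)$ for every $s$ (a Green operator is smoothing and improves to arbitrary order in the Sobolev scale while preserving the weight). Thus $G_lu\in H^{s,\gamma}(M)$, and combining the two contributions yields $u\in H^{s,\gamma}(M)$, which is the first (elliptic regularity) assertion.

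For the asymptotics statement, I would run the same identity $u=Pf-G_lu$ but now track asymptotic types. If $f\in H_S^{s-\mu,\gamma-\mu}(M)$ for an asymptotic type $S$ associated with $(\gamma-\mu,\Theta)$, then by the second mapping property in \eqref{11.E.conta}, $P$ maps $H_S^{s-\mu,\gamma-\mu}(M)$ into $H_{R_1}^{s,\gamma}(M)$ for some resulting asymptotic type $R_1$ associated with $(\gamma,\Theta)$. For the term $G_lu$: by the first part we already know $u\in H^{s,\gamma}(M)$, so $G_l$ applied to $u$ lands, by \eqref{11.E.green}, in $H_{R_2}^{\infty,\gamma}(M)$ for the $G_l$-dependent asymptotic type $R_2$ associated with $(\gamma,\Theta)$. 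Taking $R$ to be any asymptotic type that ``contains'' both $R_1$ and $R_2$ (a union/superposition of the two sequences, still associated with $(\gamma,\Theta)$ since both $\vartheta$-intervals coincide), we conclude $u=Pf-G_lu\in H_R^{s,\gamma}(M)$. This gives the second assertion.

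The main obstacle, or rather the only delicate point, is bookkeeping of the asymptotic types and weight intervals: one must check that the asymptotic type $R$ produced by adding $R_1$ and $R_2$ is genuinely associated with the weight data $(\gamma,\Theta)$ (in particular that no poles are pushed outside the strip determined by $\Theta$), and that the operator $P$ indeed respects asymptotics with a \emph{resulting} type as asserted in \eqref{11.E.conta} — this is part of the already-established continuity statement but should be cited carefully. In the case $\Theta=(-\infty,0]$ one argues via the exhaustion $H_R^{s,\gamma}(M)=\bigcap_k H_{R_k}^{s,\gamma}(M)$, applying the finite-$\Theta$ case at each level $k$ and using compatibility of the resulting types under the restriction maps $P_k$; no new analytic input is needed beyond Theorem~\ref{12.T.par}. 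I would also remark that, since $A$ and $P$ are genuine operators (not just symbols), there is no convergence issue — the argument is purely the standard ``parametrix plus smoothing Green remainder'' scheme, now carried out inside the cone algebra where the remainders are controlled with asymptotics.
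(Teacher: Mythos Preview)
Your proof is correct and follows exactly the same approach as the paper's own proof: apply the parametrix $P$ from Theorem~\ref{12.T.par} to the equation $Au=f$, write $u=Pf-Cu$ with $C=PA-1\in L_G(M,{\bf g}_l)$, and invoke the mapping properties \eqref{11.E.conta} and \eqref{11.E.green}. Your version is in fact more explicit than the paper's (which compresses everything into ``it suffices to apply \eqref{11.E.conta}''), but the underlying argument is identical.
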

\begin{proof}
Let $P\in L^{-\mu }(M,{\bf{g}}^{-1})$ a parametrix of $A.$ Then $Au=f$ implies $PAu=(1+C)u=Pf,$ i.e. $u=Pf-Cu,$ and it suffices to apply \eqref{11.E.conta}.
\end{proof}
\subsection{Asymptotic parametrices in the conical case} 
An operator $A\in L^\mu (M,{\bf{g}})$ for ${\bf{g}}=(\gamma ,\gamma -\mu ,\Theta )$ is said to be flat of order $N \in \N$ if ${\bf{r}}^{-N}A\in L^\mu (M,{\bf{g}}).$
\begin{prop}\label{13.P.dev}
Every $A\in L^\mu (M,{\bf{g}}),{\bf{g}}=(\gamma ,\gamma -\mu ,\Theta ),$ can be written in the form
\beq\label{13.E.cdev}
A=\sum_{i=0}^N{\bf{r}}^iA_i\,\,\,\textup{mod}\,\,L_G(M,{\bf{g}})
\eeq
for any $N\in \N,$ for suitable $A_i\in L^\mu (M,{\bf{g}});$ in particular, ${\bf{r}}^iA_i$ is flat of order $i,\,i=0,\dots,N.$
\end{prop}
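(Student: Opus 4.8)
The plan is to start from the standard local representation \eqref{11.E.conedec} of an element of the cone algebra, to Taylor-expand in $r$ at the conical point, and to sort the resulting summands by their order of flatness, sweeping the Green summand and a few harmless remainders into the error modulo $L_G(M,{\bf{g}})$. Writing $A=\omega r^{-\mu}\op_M^{\gamma-n/2}(h)\omega'+\chi A_{\textup{int}}\chi'+M+G$ as in \eqref{11.E.conedec}, with $h\in C^\infty(\overline{\R}_+,M^\mu_\Ocal(X))$, $A_{\textup{int}}\in L^\mu_{\clw}(s_0(M))$, $M$ smoothing Mellin and $G$ Green, the summand $G$ is placed directly into the error modulo $L_G(M,{\bf{g}})$. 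Since $\chi=1-\omega$ and $\chi'=1-\omega''$ vanish near $v$, the Schwartz kernel of $\chi A_{\textup{int}}\chi'$ is supported away from $r=0$; choosing $\eta\in C^\infty(s_0(M))$ equal to $1$ on $\supp\chi$ and vanishing near $v$, one has ${\bf{r}}^{-N}\chi A_{\textup{int}}\chi'=\chi\,(({\bf{r}}^{-N}\eta)A_{\textup{int}})\,\chi'\in L^\mu(M,{\bf{g}})$ for every $N$, so $\chi A_{\textup{int}}\chi'$ is flat of every order and is absorbed into the $i=N$ term. Likewise $M\in L_{M+G}(M,{\bf{g}})\subset L^\mu(M,{\bf{g}})$ is flat of order $0$ and is absorbed into $A_0$; if one wants the finer bookkeeping in which the degree-$j$ piece $r^{-\mu}\omega r^j\op_M^{\gamma_j-n/2}(f_j)\omega'$ of $M$ (cf. \eqref{11.E.smme}) goes to $A_j$, one writes it as ${\bf{r}}^j(\omega r^{-\mu}\op_M^{\gamma_j-n/2}(f_j)\omega')$ and normalises the Mellin weight $\gamma_j$ to $\gamma$, the difference being a finite sum of operators built from Laurent coefficients of $f_j$ in the strip between the two weight lines, hence Green, which likewise go into the error.

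The only summand carrying real content is the holomorphic Mellin term. Taylor's formula in $r$ at $0$ gives $h(r,z)=\sum_{i=0}^N h_i(z)r^i+r^{N+1}h_{N+1}(r,z)$ with $h_i:=\tfrac{1}{i!}(\partial_r^ih)(0,\cdot)\in M^\mu_\Ocal(X)$ and, by the integral form of the remainder, $h_{N+1}\in C^\infty(\overline{\R}_+,M^\mu_\Ocal(X))$. Because $\op_M^{\gamma-n/2}$ is linear and pulls out factors depending only on the outer variable, $\op_M^{\gamma-n/2}(r^iq)=r^i\op_M^{\gamma-n/2}(q)$, and since $\omega r^i={\bf{r}}^i\omega$ on $\supp\omega$ one gets $\omega r^{-\mu}\op_M^{\gamma-n/2}(h)\omega'=\sum_{i=0}^N{\bf{r}}^i(\omega r^{-\mu}\op_M^{\gamma-n/2}(h_i)\omega')+{\bf{r}}^{N+1}(\omega r^{-\mu}\op_M^{\gamma-n/2}(h_{N+1})\omega')$. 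Each bracketed operator is of the form \eqref{11.E.conedec} with vanishing interior and smoothing-Mellin-plus-Green parts, hence lies in $L^\mu(M,{\bf{g}})$, while the last, order-$(N+1)$, summand is again ${\bf{r}}^{N+1}$ times such an operator, which I fold into the $i=N$ term.

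Collecting, I would set $A_i:=\omega r^{-\mu}\op_M^{\gamma-n/2}(h_i)\omega'$ (together with, if desired, the degree-$i$ smoothing Mellin piece) for $0\le i\le N-1$, let $A_0$ additionally carry $M$, and let $A_N$ additionally carry $\omega r^{-\mu}\op_M^{\gamma-n/2}(h_N)\omega'$, the ${\bf{r}}$-times-remainder term, and ${\bf{r}}^{-N}\chi A_{\textup{int}}\chi'$; then $A_i\in L^\mu(M,{\bf{g}})$ and $A-\sum_{i=0}^N{\bf{r}}^iA_i\in L_G(M,{\bf{g}})$, and the ``in particular'' is immediate since ${\bf{r}}^{-i}({\bf{r}}^iA_i)=A_i\in L^\mu(M,{\bf{g}})$. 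I expect the only delicate point to be the optional refinement in the first paragraph, namely verifying that normalising the weights $\gamma_j$ of the $f_j$ produces precisely Green operators with the correct weight data and asymptotic types (together with the standard fact that multiplication by ${\bf{r}}^\beta$, $\beta\ge0$, maps $L^\mu(M,{\bf{g}})$ into itself, so that each ${\bf{r}}^iA_i$ is itself flat of order $i$); the holomorphic-Mellin Taylor expansion and the interior part are entirely routine.
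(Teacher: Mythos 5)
Your proposal is correct and follows the same core route as the paper: place $G$ into the error, observe that $\chi A_{\textup{int}}\chi'$ is flat of every order, and Taylor-expand the holomorphic Mellin symbol $h(r,z)$ at $r=0$, pulling the powers $r^i$ out of $\op_M^{\gamma-n/2}$.

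The only divergence is the smoothing Mellin term $M$, which is exactly where the paper spends its effort. Your primary choice (absorb all of $M$ into $A_0$) does satisfy the proposition as literally stated, since every element of $L^\mu(M,{\bf g})$ is flat of order $0$; in that sense your argument is even shorter than the paper's. But be aware that your \virg{optional refinement} is not quite right as described: you cannot in general normalise the weight $\gamma_j$ of $f_j\in M^{-\infty}_{R_j}(X)$ to $\gamma$, because $\Gamma_{(n+1)/2-\gamma}$ may meet $\pi_\C R_j$, in which case $\op_M^{\gamma-n/2}(f_j)$ is undefined and there is no \virg{finite sum of Laurent contributions} to speak of. The paper circumvents this by shifting each $\gamma_j$ ($j\ge 2$) only to some pole-free $\tilde\gamma_j\in[\gamma-1,\gamma]$ and re-indexing, so that $r^j\op_M^{\tilde\gamma_j-n/2}(f_j)$ is read as ${\bf r}^{j-1}$ times a legitimate smoothing Mellin operator carrying one power of $r$ and weight $\ge\gamma-1$; the difference from the original term is then genuinely a Green operator. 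This finer bookkeeping is what matters later (cf. Proposition \ref{13.P.dmelcom} and the construction of $P_{\textup{as}}$), where one wants the degree-$j$ Mellin piece attached to $A_j$ rather than to $A_0$, so if you intend to use the refinement you should replace \virg{normalise $\gamma_j$ to $\gamma$} by the paper's shift-and-reindex argument.
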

\begin{proof}
By definition we have $A$ in the form \eqref{11.E.conedec}. The operator $\chi A_{\textup{int}}\chi '$ is obviously flat of any order. Moreover, applying the Taylor formula to $h(r,z)$ at $r=0$ we obtain a representation of $\omega r^{-\mu }\op_M^{\gamma -n/2}(h)\omega '$ of the desired form. So it remains to consider $M $ which is of the form \eqref{11.E.smme}. In the case $\gamma _j=\gamma $ for all $j$ the claimed representation is evident. For general $\gamma _j$ we have at least $\gamma _0=\gamma .$ So it remains to consider the summands for $j>0.$ Let
$$M_j:=r^{-\mu} \omega r^j\op_M^{\gamma _j-n/2}(f_j)\omega '.$$
If $\gamma _j<\gamma $ for all (or some) $j\geq 1$ we set $\tilde{M}_0:= M_0+M_1,$ and form
$$\tilde{M}_{j-1}:=r^{-\mu }\omega r^j\op_M^{\tilde{\gamma}_j -n/2}\newline (f_j)\omega ',j\geq 2,$$
using the fact that there are weights $\tilde{\gamma }_j$ such that $\gamma -1\leq \tilde{\gamma}_j \leq \gamma $ with $\pi _\C R_j\cap \Gamma _{(n+1)/2-\tilde{\gamma}_j}=\emptyset.$ Then for 
$G_j:=\tilde{M}_{j-1}-M_j\in L_G(M,{\bf{g}}),j\geq 2,$
and for $G_1\equiv 0$ it follows that 
$$\sum_{j=0}^kM_j=\sum_{i=0}^{k-1}\tilde{M}_i-G_{i+1}$$
where ${\bf{r}}^{-i}\tilde{M}_i\in L_{M+G}(M,{\bf{g}}).$
\end{proof}
\begin{prop}\label{13.P.devcom}
Let $A\in L^\mu (M,{\bf{g}}),{\bf{g}}=(\gamma ,\gamma -\mu ,\Theta ),$ be of the form
\beq\label{13.E.aspaflatg}
A=\omega r^{-\mu }\op^{\gamma -n/2}_M(h)\omega '+\chi A_{\textup{int}}\chi ' +G
\eeq
with notation as in \eqref{11.E.conedec}, and let $\beta \geq 0.$ Then there is an $A_\beta \in L^\mu (M,{\bf{g}})$  and a $G_\beta \in L _G(M,{\bf{g}})$ such that $A{\bf{r}}^\beta -{\bf{r}}^\beta A_\beta =G_\beta .$ 
\end{prop}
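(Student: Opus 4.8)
\noindent\emph{Proof proposal.}
The plan is to commute ${\bf{r}}^\beta$ past the three summands of \eqref{13.E.aspaflatg} one at a time. Write $A=A_0+G$ with $G\in L_G(M,{\bf{g}})$ and $A_0:=\omega r^{-\mu}\op^{\gamma-n/2}_M(h)\omega'+\chi A_{\textup{int}}\chi'$, and take as candidate
$$A_\beta:=\omega r^{-\mu}\op^{\gamma-n/2}_M(T^{-\beta}h)\,\omega'+\chi\,({\bf{r}}^{-\beta}A_{\textup{int}}{\bf{r}}^\beta)\,\chi',$$
where $(T^{-\beta}h)(r,z):=h(r,z-\beta)$. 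Because the defining condition of $M^\mu_{\mathcal{O}}(X)$ is invariant under the real shift $z\mapsto z-\beta$, one still has $T^{-\beta}h\in C^\infty(\overline{\R}_+,M^\mu_{\mathcal{O}}(X))$, and conjugating $A_{\textup{int}}\in L^\mu_{\textup{cl}}(s_0(M))$ by the strictly positive smooth function ${\bf{r}}^\beta$ on $s_0(M)$ again lands in $L^\mu_{\textup{cl}}(s_0(M))$; hence $A_\beta$ is of the form \eqref{13.E.aspaflatg} with vanishing Green part, so $A_\beta\in L^\mu(M,{\bf{g}})$. Setting $G_\beta:=A{\bf{r}}^\beta-{\bf{r}}^\beta A_\beta$, it then remains to prove (a) $A_0{\bf{r}}^\beta={\bf{r}}^\beta A_\beta$ and (b) $G{\bf{r}}^\beta\in L_G(M,{\bf{g}})$, which together give $G_\beta=G{\bf{r}}^\beta\in L_G(M,{\bf{g}})$.

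For (a) the interior term is immediate: $\chi,\chi'$ commute with the multiplication operator ${\bf{r}}^\beta$, so $\chi A_{\textup{int}}\chi'{\bf{r}}^\beta=\chi A_{\textup{int}}{\bf{r}}^\beta\chi'={\bf{r}}^\beta\chi({\bf{r}}^{-\beta}A_{\textup{int}}{\bf{r}}^\beta)\chi'$. Since ${\bf{r}}=r$ near $v$, hence on the supports of $\omega,\omega',\omega''$, one has $\omega'{\bf{r}}^\beta=r^\beta\omega'$ and $r^\beta\omega={\bf{r}}^\beta\omega$, so the Mellin term reduces to the operator identity
$$\op^{\gamma-n/2}_M(h)\,r^\beta=r^\beta\,\op^{\gamma-n/2}_M(T^{-\beta}h)\qquad\text{on}\quad C_0^\infty(\R_+,C^\infty(X)).$$
Inserting the weighted Mellin transform and substituting $w=z+\beta$ turns the contour $\Gamma_{(n+1)/2-\gamma}$ into $\Gamma_{(n+1)/2-(\gamma-\beta)}$ and produces $r^\beta\op^{(\gamma-\beta)-n/2}_M(T^{-\beta}h)$; because $T^{-\beta}h$ is holomorphic on all of $\C$ with the uniform bounds of $M^\mu_{\mathcal{O}}(X)$ on every vertical line while $Mu$ is entire with rapid vertical decay for $u\in C_0^\infty$, Cauchy's theorem moves the contour back to $\Gamma_{(n+1)/2-\gamma}$ and yields the identity. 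This is exactly where the hypothesis that $A$ has the form \eqref{13.E.aspaflatg} (no smoothing Mellin summand) is used: for a meromorphic $f_j\in M^{-\infty}_{R_j}(X)$ the same shift would sweep across poles and the clean identity would break. Both sides of $A_0{\bf{r}}^\beta={\bf{r}}^\beta A_\beta$ are continuous $H^{s,\gamma}(M)\to H^{s-\mu,\gamma-\mu}(M)$, where one uses $\beta\geq0$ so that ${\bf{r}}^\beta$ maps $H^{s,\gamma}(M)$ into $H^{s,\gamma+\beta}(M)\hookrightarrow H^{s,\gamma}(M)$ by \eqref{11.E.coflk}, and they agree on the dense subspace $C_0^\infty(M\setminus\{v\})$, hence coincide.

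For (b): with $\beta\geq0$, ${\bf{r}}^\beta$ sends $H^{s,\gamma}(M)$ into $H^{s,\gamma+\beta}(M)\subseteq H^{s,\gamma}(M)$, so $G{\bf{r}}^\beta$ maps $H^{s,\gamma}(M)$ into $H^{\infty,\gamma-\mu}_P(M)$ for the same asymptotic type $P$ furnished by $G$ in \eqref{11.E.green}. Since ${\bf{r}}^\beta$ is formally self-adjoint, $(G{\bf{r}}^\beta)^*={\bf{r}}^\beta G^*$ maps $H^{s,-\gamma+\mu}(M)$ into ${\bf{r}}^\beta H^{\infty,-\gamma}_Q(M)\subseteq H^{\infty,-\gamma}_{Q'}(M)$, with $Q'$ obtained from $Q$ by the shift of exponents $q\mapsto q-\beta$, which remains admissible precisely because $\beta\geq0$. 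Together with $G{\bf{r}}^\beta\in\bigcap_{s}\Lcal(H^{s,\gamma}(M),H^{\infty,\gamma-\mu}(M))$, these are exactly the conditions defining a Green operator in $L_G(M,{\bf{g}})$. Hence $A{\bf{r}}^\beta-{\bf{r}}^\beta A_\beta=(A_0{\bf{r}}^\beta-{\bf{r}}^\beta A_\beta)+G{\bf{r}}^\beta=G{\bf{r}}^\beta=G_\beta\in L_G(M,{\bf{g}})$, as required.

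\noindent The only genuinely delicate step is the Mellin contour-deformation identity in (a), where holomorphy of $h$, equivalently the restriction to operators of the form \eqref{13.E.aspaflatg}, is indispensable; the interior term and the verification that ${\bf{r}}^\beta$ carries admissible asymptotic types to admissible ones for $\beta\geq0$ are routine.
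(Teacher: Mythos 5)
Your proposal is correct and follows essentially the same route as the paper's proof: the same candidate $A_\beta$ (with $T^{-\beta}h$ in the Mellin summand and ${\bf{r}}^{-\beta}A_{\textup{int}}{\bf{r}}^\beta$ in the interior summand), the same weight-shift identity $r^{-\beta}\op_M^{\gamma-n/2}(h)r^\beta=\op_M^{\gamma-n/2-\beta}(T^{-\beta}h)=\op_M^{\gamma-n/2}(T^{-\beta}h)$ justified by Cauchy's theorem using the holomorphy of $h$, and the observation that $G{\bf{r}}^\beta$ remains a Green operator. The extra detail you supply on the adjoint condition for $G{\bf{r}}^\beta$ and on why the absence of meromorphic Mellin summands is essential is accurate and consistent with the paper's separate treatment of that case in Proposition \ref{13.P.dmelcom}.
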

\begin{proof}
First we have $G_\beta :=G{\bf{r}}^\beta \in L _G(M,{\bf{g}}).$ Moreover, $\chi A_{\textup{int}}\chi '{\bf{r}}^\beta ={\bf{r}}^\beta \chi A_{\textup{int},\beta }\chi '$ for $A_{\textup{int},\beta } :={\bf{r}}^{-\beta}\chi A_{\textup{int} }{\bf{r}}^\beta$ belongs to $L^\mu (M,{\bf{g}}).$ Let us write $A$ in the form \eqref{11.E.conedec}. Concerning the first summand on the right of \eqref{13.E.aspaflatg} we observe $r^\beta r^{-\beta  }\op_M^{\gamma -n/2}(h)r^\beta  \newline
=r^\beta \op_M^{\gamma -n/2-\beta }(T^{-\beta  }h)=r^\beta \op_M^{\gamma -n/2}(T^{-\beta  }h);$ the latter relation employs Cauchy's theorem, cf. \cite[Propsition 2.3.69]{Schu20}. Thus $\omega r^{-\mu }\op^{\gamma -n/2}_M(h)\omega '{\bf{r}}^\beta={\bf{r}}^\beta\omega r^{-\mu }\op^{\gamma -n/2}_M \newline (T^{-\beta  }h)\omega ',$ but $\omega r^{-\mu }\op^{\gamma -n/2}_M(T^{-\beta  }h)\omega '$ belongs to $L^\mu (M,{\bf{g}}).$
\end{proof}
\nt Comparing \eqref{13.E.aspaflatg} with \eqref{11.E.conedec} it remains to consider the commutation of ${\bf{r}}^\beta$ through the smoothing Mellin operator $M.$ Clearly, close to $r=0$ we do not make any difference between ${\bf{r}}$ and $r.$
\begin{prop}\label{13.P.dmelcom}
Let $M\in L^\mu (M,{\bf{g}}),{\bf{g}}=(\gamma ,\gamma -\mu ,\Theta ),$ be a smoothing Mellin operator
\beq\label{13.E.aspaflme}
M:=M_0+M_1\,\,\mbox{for}\,\,M_0:=r^{-\mu }\omega\op_M^{\gamma -n/2}(f_0)\omega ',M_1:=r^{-\mu }\sum_{j=1}^k\omega r^j\op_M^{\gamma_j -n/2}(f_j)\omega ',
\eeq
for Mellin symbols $f_j\in M^{-\infty }_{R_j}(X)$ for Mellin asymptotic types $R_j,$ and weights $\gamma _j$ with $\gamma -j\leq \gamma _j\leq \gamma $ and $\pi _\C R_j\cap \Gamma _{(n+1)/2-\gamma _j}=\emptyset$ for all $j,$ cf. \eqref{11.E.smme}, and let $\beta \geq 0.$
Then there is an $M_{1,\beta} \in L^\mu _{M+G}(M,{\bf{g}})$  and a $G_{1,\beta} \in L _G(M,{\bf{g}})$ such that $M_1{\bf{r}}^\beta -{\bf{r}}^\beta M_{1,\beta} =G_{1,\beta} .$ Analogously, under the condition $\pi _\C R_0\cap \Gamma _{(n+1)/2-(\gamma +\beta )}=\emptyset$ there is an $M_{0,\beta} \in L^\mu _{M+G}(M,{\bf{g}})$  and a $G_{0,\beta} \in L _G(M,{\bf{g}})$ such that $M_0{\bf{r}}^\beta -{\bf{r}}^\beta M_{0,\beta} =G_{0,\beta} .$ In general, for $\beta >0$ and every $0<\varepsilon <\beta $ we have
\beq\label{13.E.aspaflmeap}
M_0{\bf{r}}^\beta={\bf{r}}^{\beta-\varepsilon }\omega r^{-\mu }\op_M^{\gamma -n/2}(T^{-\beta +\varepsilon }f_0){\bf{r}}^\varepsilon \omega '+G_{0,\beta -\varepsilon },
\eeq
for a $G_{0,\beta -\varepsilon }\in  L _G(M,{\bf{g}}).$
\end{prop}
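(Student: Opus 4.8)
The plan is to treat the three assertions separately, reducing each to the basic commutation identity for weighted Mellin operators, namely $r^{-\beta}\op_M^{\gamma-n/2}(f)r^{\beta}=\op_M^{\gamma-n/2-\beta}(T^{-\beta}f)$, combined with a contour-shift (Cauchy) argument to move the weight line back from $\Gamma_{(n+1)/2-(\gamma-\beta)}$ to $\Gamma_{(n+1)/2-\gamma}$. First I would handle $M_1$: for each summand $r^{-\mu}\omega r^j\op_M^{\gamma_j-n/2}(f_j)\omega'$ with $j\geq 1$, commuting ${\bf r}^\beta$ to the left produces $r^{-\mu}\omega r^j\op_M^{\gamma_j-n/2-\beta}(T^{-\beta}f_j){\bf r}^\beta\omega'$ up to a Green remainder coming from the cut-off functions; since $j\geq 1$, the shifted weight $\gamma_j+\beta$ still satisfies $\gamma-1\leq\gamma_j\leq\gamma_j+\beta$, and — this is the one point to check carefully — one can choose an admissible weight $\tilde\gamma_j$ with $\gamma-j\leq\tilde\gamma_j\leq\gamma$, $\pi_\C R_j\cap\Gamma_{(n+1)/2-\tilde\gamma_j}=\emptyset$, and use Remark~\ref{11.R.mdec}/Cauchy's theorem to replace $\op_M^{\gamma_j-n/2-\beta}$ by $\op_M^{\tilde\gamma_j-n/2}$ modulo $L_G$, exactly as in the proof of Proposition~\ref{13.P.dev}. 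Collecting these gives $M_{1,\beta}\in L^\mu_{M+G}(M,{\bf g})$ and the Green error $G_{1,\beta}$.

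For $M_0$ under the extra hypothesis $\pi_\C R_0\cap\Gamma_{(n+1)/2-(\gamma+\beta)}=\emptyset$, the same computation yields $M_0{\bf r}^\beta={\bf r}^\beta r^{-\mu}\omega\op_M^{\gamma-n/2-\beta}(T^{-\beta}f_0)\omega'$ modulo $L_G$; since by hypothesis there are no poles of $T^{-\beta}f_0$ on the strip between $\Gamma_{(n+1)/2-\gamma}$ and $\Gamma_{(n+1)/2-(\gamma+\beta)}$, Cauchy's theorem lets me shift the integration contour and write $\op_M^{\gamma-n/2-\beta}(T^{-\beta}f_0)=\op_M^{\gamma-n/2}(T^{-\beta}f_0)$ modulo a smoothing remainder (again cf.\ \cite[Proposition 2.3.69]{Schu20}), so $M_{0,\beta}:=r^{-\mu}\omega\op_M^{\gamma-n/2}(T^{-\beta}f_0)\omega'\in L^\mu_{M+G}(M,{\bf g})$.

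For the general statement, when the pole-freeness hypothesis on $R_0$ fails we cannot shift the contour all the way, but we can shift it by any amount strictly less than the distance to the nearest obstructing pole. Writing ${\bf r}^\beta={\bf r}^{\beta-\varepsilon}{\bf r}^\varepsilon$ and commuting only ${\bf r}^{\beta-\varepsilon}$ through, I get $M_0{\bf r}^\beta={\bf r}^{\beta-\varepsilon}r^{-\mu}\omega\op_M^{\gamma-n/2-(\beta-\varepsilon)}(T^{-(\beta-\varepsilon)}f_0){\bf r}^\varepsilon\omega'$ modulo $L_G$; for $\varepsilon$ small enough the strip between $\Gamma_{(n+1)/2-\gamma}$ and $\Gamma_{(n+1)/2-(\gamma+\beta-\varepsilon)}$ avoids $\pi_\C R_0$, so Cauchy's theorem converts $\op_M^{\gamma-n/2-(\beta-\varepsilon)}(T^{-(\beta-\varepsilon)}f_0)$ into $\op_M^{\gamma-n/2}(T^{-\beta+\varepsilon}f_0)$ modulo smoothing, giving \eqref{13.E.aspaflmeap} with a Green remainder $G_{0,\beta-\varepsilon}$.

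The main obstacle is bookkeeping rather than a single hard estimate: one must keep track of which weight line each $\op_M$ is taken on, verify at each contour shift that the relevant strip is free of the poles of the Mellin symbols $R_j$ (and of the shifted symbols $T^{-\beta}f_j$), and check that all the errors introduced — from the cut-off functions $\omega,\omega'$, from replacing $\Op_r$-type operators by smoothing ones, and from the contour moves — are genuinely Green operators in $L_G(M,{\bf g})$ rather than merely smoothing Mellin. This last point is where one invokes the algebra structure of $L_{M+G}$ and $L_G$ recalled around \eqref{11.E.conm+g}, together with the fact (Remark~\ref{11.R.sm}) that a smoothing Mellin operator whose Mellin symbols have empty singular sets on the relevant weight lines is already Green.
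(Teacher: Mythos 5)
Your overall strategy is the one the paper intends: the paper's own proof consists of a reference to \cite[Proposition 2.3.69]{Schu20} for $M_0$ plus the remark that the $r^j$-factors make the weight-line condition superfluous for $M_1$, and your expansion of this via the commutation identity $\op_M^{\delta}(f)r^\beta=r^\beta\op_M^{\delta-\beta}(T^{-\beta}f)$ together with the re-adjustment of the weights $\gamma_j-\beta$ to admissible $\tilde\gamma_j\in[\gamma-j,\gamma]$ for $j\geq 1$ is the right skeleton.

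There is, however, a genuine gap in how you justify the contour shifts. You consistently require the swept strip to be free of poles, and in the general case you assert that ``for $\varepsilon$ small enough the strip between $\Gamma_{(n+1)/2-\gamma}$ and $\Gamma_{(n+1)/2-(\gamma+\beta-\varepsilon)}$ avoids $\pi_\C R_0$.'' This is false: that strip \emph{grows} to $\{(n+1)/2-\gamma-\beta<\reb z<(n+1)/2-\gamma\}$ as $\varepsilon\to 0$, and the general case treated by \eqref{13.E.aspaflmeap} is precisely the one where $\pi_\C R_0$ meets this region (otherwise the second assertion already applies). Note also that the hypothesis $\pi_\C R_0\cap\Gamma_{(n+1)/2-(\gamma+\beta)}=\emptyset$ of the second assertion only excludes poles \emph{on} the target line, not in the strip, so even there your reading is too strong. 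The correct mechanism --- and the reason the left-over terms are Green operators rather than zero --- is that each pole of $T^{-\beta}f_0$ crossed when moving the contour from $\Gamma_{(n+1)/2-\gamma+\beta}$ to $\Gamma_{(n+1)/2-\gamma}$ contributes a finite-rank residue operator $u\mapsto \omega(r)r^{-\mu-p}\sum_k c_k(x)\log^k r\,\ell_k(u)$ with $\reb p<(n+1)/2-\gamma$; one then checks that such terms map into $H_P^{\infty,\gamma-\mu}(M)$ for an asymptotic type $P$ associated with $(\gamma-\mu,\Theta)$, and that the adjoints behave analogously, so that they belong to $L_G(M,{\bf{g}})$. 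The weight-line conditions (and the role of $\varepsilon$ and of the factor ${\bf{r}}^\varepsilon$ kept inside \eqref{13.E.aspaflmeap}) serve only to keep the poles of the shifted symbol off the final integration contour so that $M_{0,\beta}$, respectively the middle factor in \eqref{13.E.aspaflmeap}, is well defined; they do not make the residues disappear. The same remark applies to the weight change $\gamma_j-\beta\rightarrow\tilde\gamma_j$ in your treatment of $M_1$, where the residues are again Green, this time thanks to the prefactor $r^j$, $j\geq 1$.
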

\begin{proof}
The assertion for $M_0$ is contained in \cite[Propsition 2.3.69]{Schu20}. For $M_1$ we can argue in a similar manner. Here, because of the extra $r^j$-powers for $j>0$ the condition concerning weight lines disappears.
\end{proof}
\nt Let us now turn to asymptotic parametrices. Let $A\in L^\mu (M,{\bf{g}})$ be elliptic, and let $P\in L^{-\mu }(M,{\bf{g}}^{-1})$ be a parameterix, cf. Theorem \ref{12.T.par}. Then applying Proposition \ref{13.P.dev} to $P$ we obtain an expansion
\beq\label{13.E.aspa}
P=\sum_{m=0}^N{\bf{r}}^mP_m  \,\,\,\textup{mod}\,L_G(M,{\bf{g}}^{-1})
\eeq
for every $N\in \N.$ Let us call \eqref{13.E.aspa} an asymptotic parametrix of $A.$ Writing $A$ itself in the form \eqref{13.E.cdev} the operator $A_0$ is elliptic in $L^\mu (M,{\bf{g}}),$ and $P_0\in L^{-\mu }(M,{\bf{g}}^{-1})$ in \eqref{13.E.aspa} can be taken as a parametrix of $A_0,$  obtained by Theorem \ref{12.T.par}. The construction of $P_0$ is easier than that of $P$ itself because $P_0$ concerns the case of constant coefficients with respect to $r.$ The idea of asymptotic paramterices is to obtain an expansion like \eqref{13.E.aspa} by an iterative process, starting with the ansatz
\beq\label{13.E.aspit}
\big(\sum_{m=0}^N{\bf{r}}^mP_m \big)\big(\sum_{i=0}^N{\bf{r}}^iA_i \big)\sim 1,
\eeq
and, analogously,
\beq\label{13.E.aspitr}
\big(\sum_{i=0}^N{\bf{r}}^iA_i \big)\big(\sum_{m=0}^N{\bf{r}}^mP_m \big)\sim 1
\eeq
where $\sim $ indicates equality modulo a Green plus flat remainder. Let us discuss the construction of \eqref{13.E.aspit}; the considerations for \eqref{13.E.aspitr} are similar. The operators ${\bf{r}}^mP_m$ that we intend to obtain from successively solving \eqref{13.E.aspit} are not automatically the same as those in \eqref{13.E.aspa}. Therefore, in this procedure it makes sense first to replace ${\bf{r}}^mP_m$ by $\boldsymbol{P_m}.$  However, we set $\boldsymbol{P_0}:=P_0$ which is the known parametrix of $A_0.$ For $\boldsymbol{P_1}$ we define
\beq\label{13.E.asp1}
\boldsymbol{P_1}:=-P_0{\bf{r}}A_1P_0.
\eeq
This operator satisfies the relation
\beq\label{13.E.asp11}
\boldsymbol{P_1}A_0+P_0{\bf{r}}A_1=-P_0{\bf{r}}A_1C_l\in L_G(M,{\bf{g}}_l)
\eeq
where $C_l:=P_0A_0-1\in L_G(M,{\bf{g}}_l).$ In general, for $ j>0,$ motivated by the desirable identity
$\sum_{m+i=j}\boldsymbol{P_m}{\bf{r}}^iA_i=0,\,\,\,\mbox{or}\,\,\,\boldsymbol{P_j}A_0=-\sum_{m+i=j,m<j}\boldsymbol{P_m}{\bf{r}}^iA_i,$ we set
\beq\label{13.E.asp1mj}
\boldsymbol{P_j}:=-\big(\sum_{m+i=j,m<j}\boldsymbol{P_m}{\bf{r}}^iA_i\big)P_0 \in L^{-\mu }(M,{\bf{g}}^{-1}),
\eeq
and obtain
\beq\label{13.E.asp1mj3}
\sum_{m+i=j}\boldsymbol{P_m}{\bf{r}}^iA_i=-\big(\sum_{m+i=j,m<j}\boldsymbol{P_m}{\bf{r}}^iA_i\big)C_l\in L^{-\infty }(M,{\bf{g}}_l)
\eeq 
for any $j.$ The operators \eqref{13.E.asp1mj} can be expressed in terms of $P_0$ and ${\bf{r}}^iA_i,$ for $i=1,\dots,j,$ alone. For instance, we have
\beq\label{13.E.aspber2}
\boldsymbol{P_2}=-P_0{\bf{r}}^2A_2P_0+P_0{\bf{r}}A_1P_0{\bf{r}}A_1P_0,
\eeq 
\beq\label{13.E.aspber3}
\boldsymbol{P_3}=-P_0{\bf{r}}A_1P_0{\bf{r}}A_1P_0{\bf{r}}A_1P_0+P_0{\bf{r}}^2A_2P_0{\bf{r}}A_1P_0+P_0{\bf{r}}A_1P_0{\bf{r}}^2A_2P_0-P_0{\bf{r}}^3A_3P_0,
\eeq
etc. These representations allow us to establish a scheme of successively computing the asymptotics of solutions in the sense of Theorem \ref{13.T.as} only by using the structure of $P_0.$ In the applications mentioned at the beginning the operator $A$ is a differential operator. In this case also $A_m$ are differential operators in the cone calculus, and there are differential operators $A_{m,i}$ of analogous structure, such that
\beq\label{13.E.commd}
A_m{\bf{r}}^i-{\bf{r}}^iA_{m,i}=0
\eeq
for every $i.$ Let us assume for the moment that 
\beq\label{13.E.comm}
P_0{\bf{r}}^i-{\bf{r}}^iP_{0,i}=G_i ,
\eeq
for every $i,$ for remainders $G_i\in L_G (M, {\bf{g}}^{-1})$ of finite rank (which is the case when the operator $M_0$ contained in $P_0$ satisfies the corresponding condition of Proposition \ref{13.P.dmelcom}, namely, $\pi _\C R_0\cap \Gamma _{(n+1)/2-(\gamma +i )}=\emptyset$). Then all ${\bf{r}}$-powers contained on the right of \eqref{13.E.asp1mj} can be completely commuted through the other factors, on the expense of Green remainders, and it follows that
\beq\label{13.E.commexj}
\boldsymbol{P}_j={\bf{r}}^jP_j+ D_j \,\,\mbox{for suitable}\,\, P_j\in L^{-\mu }(M,{\bf{g}}^{-1})
\eeq
and finite rank operators $D_j\in L_G (M, {\bf{g}}^{-1}).$ For instance, we obtain 
\beq\label{13.E.commex}
\boldsymbol{P}_1=-({\bf{r}}P_{0,1}+G_1)A_1P_0={\bf{r}}P_1+D_1
\eeq
for $P_1=-P_{0,1}A_1P_0\in L^{-\mu }(M,{\bf{g}}^{-1})$ and $D_1=-G_1A_1P_0\in L_G (M, {\bf{g}}^{-1}),$ etc. In this case we take the sum 
\beq\label{13.E.commaspa}
P_{\textup{as}}:=\sum_{m=0}^N{\bf{r}}^mP_m 
\eeq
for any fixed $N$ as an asymptotic parametrix of $A.$
Finally, if the condition $\pi _\C R_0\cap \Gamma _{(n+1)/2-(\gamma +i )}=\emptyset$ is not satisfied for all $i,$ then a consequence of Proposition \ref{13.P.dmelcom} is that for every $\varepsilon >0$ we find operators of the form
\beq\label{13.E.commexje}
\boldsymbol{P}_{j,\varepsilon }={\bf{r}}^{j-\varepsilon }P_{j,\varepsilon }+ D_{j,\varepsilon }\,\,\mbox{for suitable}\,\, P_{j,\varepsilon }\in L^{-\mu }(M,{\bf{g}}^{-1}) ,j\geq 1,
\eeq
and finite rank operators $D_{j,\varepsilon } \in L_G (M, {\bf{g}}^{-1}),$ and $\boldsymbol{P}_0:=P_0,$ such that 
\beq\label{13.E.commaspaep}
P_{\textup{as}}:=P_0+\sum_{m=1}^N{\bf{r}}^{m-\varepsilon }P_{m,\varepsilon }
\eeq
can play the role of an asymptotic parametrix. 
\begin{thm}\label{13.P.as}
Let $A\in L^\mu (M,{\bf{g}})$ be an elliptic operator. Then for any fixed $N\in \N,N\geq 1,$ the asymptotic parametrix $P_{\textup{as}}$ can be chosen in such a way that 
\beq\label{13.E.copas}
P_{\textup{as}}A=1+C+F
\eeq
for an $F\in L^\mu (M,{\bf{g}}_l)$ which is flat of order $N-1 $ and a $C\in L_G(M,{\bf{g}}_l).$
\end{thm}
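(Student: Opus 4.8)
The plan is to assemble $P_{\textup{as}}$ from the blockwise construction already carried out in \eqref{13.E.asp1mj}--\eqref{13.E.commaspaep} and then estimate the remainder. First I would write $A$ in the finite expansion form \eqref{13.E.cdev}, $A=\sum_{i=0}^N\mathbf{r}^iA_i \bmod L_G(M,\mathbf{g})$, with $A_0$ elliptic in $L^\mu(M,\mathbf{g})$, and take $P_0$ a parametrix of $A_0$ via Theorem \ref{12.T.par}, so $C_l:=P_0A_0-1\in L_G(M,\mathbf{g}_l)$. Then I define $\boldsymbol{P}_0,\dots,\boldsymbol{P}_N$ inductively by \eqref{13.E.asp1mj} and set $P_{\textup{as}}:=\sum_{m=0}^N\boldsymbol{P}_m$ (in the case where the weight-line conditions of Proposition \ref{13.P.dmelcom} fail we use the $\varepsilon$-modified operators $\boldsymbol{P}_{m,\varepsilon}$ of \eqref{13.E.commexje} instead, which changes nothing essential in what follows since $\mathbf{r}^{m-\varepsilon}$ is still flat of order $\geq m-1$ for $\varepsilon$ small). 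By \eqref{13.E.commexj} each $\boldsymbol{P}_m$ has the shape $\mathbf{r}^mP_m+D_m$ with $P_m\in L^{-\mu}(M,\mathbf{g}^{-1})$ and $D_m\in L_G$, so this $P_{\textup{as}}$ indeed agrees with the form \eqref{13.E.commaspa} modulo Green operators, which is what we want an asymptotic parametrix to be.

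Next I compute $P_{\textup{as}}A$. Using $A=\sum_{i=0}^N\mathbf{r}^iA_i\bmod L_G$ and the fact that composition with a Green operator stays Green (the subalgebra property recalled around \eqref{11.E.conm+g}), I get
\[
P_{\textup{as}}A=\sum_{j=0}^{2N}\ \sum_{m+i=j,\ m,i\leq N}\boldsymbol{P}_m\mathbf{r}^iA_i\ +\ C'
\]
for some $C'\in L_G(M,\mathbf{g}_l)$. For $j=0$ the term is $\boldsymbol{P}_0\mathbf{r}^0A_0=P_0A_0=1+C_l$. For $1\leq j\leq N$ the defining relation \eqref{13.E.asp1mj3} shows $\sum_{m+i=j}\boldsymbol{P}_m\mathbf{r}^iA_i=-\big(\sum_{m+i=j,m<j}\boldsymbol{P}_m\mathbf{r}^iA_i\big)C_l\in L_G(M,\mathbf{g}_l)$, so all these contributions land in $C$. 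The leftover is exactly the ``high'' part $F:=\sum_{j=N+1}^{2N}\sum_{m+i=j,\ m,i\leq N}\boldsymbol{P}_m\mathbf{r}^iA_i$. Here I would argue that each summand is flat of order $N-1$: since $m+i=j\geq N+1$ and $i\leq N$, we have $m\geq 1$, and $\boldsymbol{P}_m$ contains the factor $\mathbf{r}^m$ (by \eqref{13.E.commexj}, modulo a Green operator which is absorbed into $C$), while $\mathbf{r}^iA_i$ contributes $\mathbf{r}^i$; commuting all $\mathbf{r}$-powers to the left as in \eqref{13.E.comm}--\eqref{13.E.commd} (at the cost of finite-rank Green remainders, again absorbed into $C$) gives a factor $\mathbf{r}^{m+i}=\mathbf{r}^j$ with $j\geq N+1\geq N$, so in particular $\mathbf{r}^{-(N-1)}F\in L^\mu(M,\mathbf{g}_l)$, i.e. $F$ is flat of order $N-1$. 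Collecting the pieces gives \eqref{13.E.copas}.

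The main obstacle I expect is the bookkeeping in the commutation step: to claim $F$ is flat of order $N-1$ one must genuinely commute the internal $\mathbf{r}^i$-powers (coming from $\mathbf{r}^iA_i$) to the far left through the intervening $\boldsymbol{P}_m$'s, and this is where Propositions \ref{13.P.devcom} and \ref{13.P.dmelcom} are needed --- the commutator $P_0\mathbf{r}^i-\mathbf{r}^iP_{0,i}=G_i$ is only Green, not zero, and in the non-generic weight-line case one picks up the $\varepsilon$-loss of \eqref{13.E.aspaflmeap}, so strictly one obtains $\mathbf{r}^{N+1-\varepsilon}$ rather than $\mathbf{r}^{N+1}$; but $\mathbf{r}^{N+1-\varepsilon}$ is still flat of order $N-1$ for $0<\varepsilon<1$, so the statement as phrased (flatness of order $N-1$ rather than $N$) is exactly the margin that makes the argument robust to this loss. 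The secondary point to be careful about is that the order count and weight data are consistent: each $\boldsymbol{P}_m$ is a composition of operators in $L^{-\mu}$ and $\mathbf{r}^iA_i\in L^\mu$, so every summand in $F$ lies in $L^\mu(M,\mathbf{g}_l)$ by the composition rule \eqref{11.E.sccom} with the $\Theta$-components and weights matching up as in the statement of Theorem \ref{12.T.par}; this is routine but should be stated. Once these are in place, \eqref{13.E.copas} follows for any $N\geq 1$.
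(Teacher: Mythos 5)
Your proposal is correct and follows essentially the same route as the paper's proof: telescoping the products $\sum_{m+i=j}\boldsymbol{P_m}{\bf{r}}^iA_i$ into Green remainders for $j\leq N$ via \eqref{13.E.asp1mj3}, isolating the high part $m+i>N$ as $F$, and commuting the ${\bf{r}}$-powers to the left at the cost of Green remainders, with the $\varepsilon$-loss in the non-generic weight-line case accounting for the exponent $N-1$ rather than $N$. Your bookkeeping (working with $\sum\boldsymbol{P_m}$ rather than $\sum{\bf{r}}^mP_m$, which differ only by a Green operator) is a harmless variant of what the paper does.
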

\begin{proof}
Let us first consider the case that $A$ is a differential operator. We already constructed $P_{\textup{as}},$ and it remains to verify the relation \eqref{13.E.copas}. First, by construction we have
\beq\label{13.E.copassm}
\sum_{j=0}^N\sum_{m+i=j}\boldsymbol{P_m}{\bf{r}}^iA_i=1+G
\eeq 
for some $G\in L_G(M,{\bf{g}}_l).$ If $P_{\textup{as}}$ has the form $\sum_{m=0}^N{\bf{r}}^mP_m,$ then $P_{\textup{as}}A$ coincides with \eqref{13.E.copassm} modulo a remainder  $D+\sum_{m+i>N}{\bf{r}}^mP_m{\bf{r}}^iA_i$ for a $D\in L_G(M,{\bf{g}}_l).$ The summand ${\bf{r}}^mP_m{\bf{r}}^iA_i$ contains $m+i$ factors ${\bf{r}};$ those can be commuted through the other factors to the left (on the expense of Green remainders) which gives us at the end at least a factor ${\bf{r}}^N$ from the left. In other words, we obtain the relation \eqref{13.E.copas}. For $P_{\textup{as}}$ in the form \eqref{13.E.commaspaep} we can argue in a similar manner, but we may obtain on the left ${\bf{r}}^{N-\varepsilon }$ instead of ${\bf{r}}^N.$ Therefore, since $N$ is arbitrary anyway, we may be satisfied with the exponent $N-1$ in both cases.\\
The construction easily extends to any elliptic (pseudo-differential) operator in the cone algebra. The only change is that the commutation relations \eqref{13.E.commd} have to be replaced by relations of the kind $A_m{\bf{r}}^i-{\bf{r}}^{i-\varepsilon }A_{m,i,\varepsilon }=C_{i,\varepsilon }$ for any sufficiently small $\varepsilon >0$ and a finite rank Green operator $C_{i,\varepsilon }.$
\end{proof}
\begin{cor}\label{13.C.as}
Theorem \textup{\ref{13.P.as}} allows us to express asymptotics of solutions to an elliptic equation $Au=f\in H^{s-\mu ,\gamma -\mu }_Q(M),$ only by using a parametrix $P_0$ of $A_0$ and the commutation relations that are involved in the construction of $P_{\textup{as}}.$ From \eqref{13.E.copas} it follows that $P_{\textup{as}}Au=(1+C+F)u=P_{\textup{as}}f\in H_S^{s,\gamma }(M)$ for some asymptotic type $S.$ For sufficiently large $N$ \textup{(}depending on the weight interval $\Theta =(-(k+1),0])$ we obtain $Fu\in H_\Theta ^{s,\gamma }(M).$ Morover, $Cu\in H_R^{\infty ,\gamma }(M)$ for another asymptotic type $R$ depending on $C$ gives us finally $u\in H_P^{s ,\gamma }(M)$ for a resulting asymptotic type $P.$ We tacitly employed the non-asymptotic part of Theorem \textup{\ref{13.T.as}}, i.e. that we already know $u\in H^{s ,\gamma }(M),$ and the mapping properties \eqref{11.E.conta} and \eqref{11.E.green}.
\end{cor}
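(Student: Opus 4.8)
The plan is to read off the asymptotic type of $u$ from the right-hand side of \eqref{13.E.copas}, using the mapping properties \eqref{11.E.conta}, \eqref{11.E.green} together with the non-asymptotic part of Theorem~\ref{13.T.as} to control the Sobolev regularity.

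First I would observe that $Au=f$ with $f\in H^{s-\mu,\gamma-\mu}_Q(M)\subset H^{s-\mu,\gamma-\mu}(M)$ already forces $u\in H^{s,\gamma}(M)$ by the non-asymptotic assertion of Theorem~\ref{13.T.as} (equivalently, apply the ordinary parametrix $P$ of Theorem~\ref{12.T.par}, write $u=Pf-Cu$ with $C=PA-1\in L_G(M,{\bf{g}}_l)$, and use \eqref{11.E.conta}). Then I would apply the asymptotic parametrix $P_{\textup{as}}$ furnished by Theorem~\ref{13.P.as} and invoke \eqref{13.E.copas}, obtaining
\[
u=P_{\textup{as}}Au-Cu-Fu=P_{\textup{as}}f-Cu-Fu,
\]
with $C\in L_G(M,{\bf{g}}_l)$ Green and $F\in L^\mu(M,{\bf{g}}_l)$ flat of order $N-1$. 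It now suffices to determine the asymptotic type of the three summands on the right, knowing $u\in H^{s,\gamma}(M)$.

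For the first summand, $P_{\textup{as}}$ has the shape $\sum_m{\bf{r}}^{\alpha_m}Q_m$ with exponents $\alpha_m\geq 0$ and $Q_m\in L^{-\mu}(M,{\bf{g}}^{-1})$ (cf.\ \eqref{13.E.commaspa}, \eqref{13.E.commaspaep}); by \eqref{11.E.conta} each $Q_m$ sends $H^{s-\mu,\gamma-\mu}_Q(M)$ into $H^{s,\gamma}_{Q_m'}(M)$ for a resulting type $Q_m'$ associated with $(\gamma,\Theta)$, and multiplication by ${\bf{r}}^{\alpha_m}$ is the isomorphism \eqref{11.E.coflk} onto $H^{s,\gamma+\alpha_m}(M)\hookrightarrow H^{s,\gamma}(M)$, which lowers the exponents of the asymptotic expansion by $\alpha_m$ (those exponents that thereby fall beyond $\Theta$ being absorbed into the flat part). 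Summing, $P_{\textup{as}}f\in H^{s,\gamma}_S(M)$ for a resulting $S$ associated with $(\gamma,\Theta)$. For the second summand, $C\in L_G(M,{\bf{g}}_l)$ is Green, so \eqref{11.E.green} gives $Cu\in H^{\infty,\gamma}_R(M)\subset H^{s,\gamma}_R(M)$ for a $C$-dependent asymptotic type $R$ associated with $(\gamma,\Theta)$. For the third summand, flatness of order $N-1$ means ${\bf{r}}^{-(N-1)}F\in L^\mu(M,{\bf{g}}_l)$, so by \eqref{11.E.conta} and \eqref{11.E.coflk}
\[
Fu={\bf{r}}^{N-1}\bigl({\bf{r}}^{-(N-1)}F\bigr)u\in {\bf{r}}^{N-1}H^{s-\mu,\gamma}(M)=H^{s-\mu,\gamma+N-1}(M),
\]
which for $N\geq k+2$ lies in $\bigcap_{\varepsilon>0}H^{s-\mu,\gamma+(k+1)-\varepsilon}(M)=H^{s-\mu,\gamma}_\Theta(M)$; since moreover $Fu=P_{\textup{as}}f-Cu-u\in H^{s,\gamma}(M)$, this is a flat remainder of full regularity, $Fu\in H^{s,\gamma}_\Theta(M)$.

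Finally I would combine the three: $u=P_{\textup{as}}f-Cu-Fu\in H^{s,\gamma}_S(M)+H^{s,\gamma}_R(M)+H^{s,\gamma}_\Theta(M)$, and by the definition \eqref{11.E.astflkflm} a finite sum of such spaces is contained in $H^{s,\gamma}_P(M)$, where $\Ecal_P:=\Ecal_S+\Ecal_R$ in the notation \eqref{11.E.astflk} — i.e.\ $P$ merges the pairs of $S$ and $R$, taking the larger logarithmic order at a common exponent and discarding whatever lies beyond $\Theta$. Hence $u\in H^{s,\gamma}_P(M)$ for a resulting asymptotic type $P$ associated with $(\gamma,\Theta)$. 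For $\Theta=(-\infty,0]$ one runs this for every truncation $\Theta_k=(-(k+1),0]$ and uses $H^{s,\gamma}_P(M)=\bigcap_kH^{s,\gamma}_{P_k}(M)$. The only step that is not purely formal bookkeeping with asymptotic types is the recovery of the Sobolev order $s$ in the flat remainder $Fu$ (since $F$ has order $\mu$, it a priori only yields $Fu\in H^{s-\mu,\gamma+N-1}(M)$); here the a priori regularity $u\in H^{s,\gamma}(M)$ from Theorem~\ref{13.T.as} is essential, and in the typical application $f\in C^\infty$ it is automatic, as then $u\in H^{\infty,\gamma}(M)$.
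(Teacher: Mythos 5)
Your argument is correct and follows exactly the route the paper itself sketches inside the corollary (which carries no separate proof): the decomposition $u=P_{\textup{as}}f-Cu-Fu$ from \eqref{13.E.copas}, the a priori regularity $u\in H^{s,\gamma}(M)$ from Theorem \ref{13.T.as}, and the mapping properties \eqref{11.E.conta}, \eqref{11.E.green} applied to the three summands. You merely fill in more bookkeeping than the paper does — including correctly flagging the one genuinely delicate point, the recovery of the Sobolev order $s$ in the flat remainder $Fu$, which the paper glosses over as well.
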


\section{Elliptic edge operators} 

\subsection{Edge-degenerate operators} 
We first develop some features of the edge pseudo-differential calculus including its symbolic structure which is designed to express parametrices of elliptic elements, especially parametrices of elliptic edge-degenerate differential operators, cf. \eqref{iedeg}. A manifold $M$ with edge $s_1(M)$ can be represented as a quotient space $\mathbb{M}/\sim $ for the so-called stretched manifold $\mathbb{M}$ associated with $M.$ The stretched manifold is a $C^\infty $ manifold with boundary $\partial \mathbb{M}$ which is a locally trivial $X$-bundle over $s_1(M).$ The projection $\mathbb{M}\rightarrow M$ is determined by the bundle projection $\partial \mathbb{M}\rightarrow s_1(M)$ that maps the fibre $X$ over a $y\in s_1(M)$ to $y.$ There is a collar neighbourhood $\mathbb{V}$ of $\partial \mathbb{M}$ in $\mathbb{M}$ which can be identified with an $\overline{\R}_+\times X$-bundle over $s_1(M).$ Then the pointwise projection $\overline{\R}_+\times X\rightarrow X^\Delta =(\overline{\R}_+\times X)/(\{0\}\times X)$ gives us a map $\mathbb{V}\rightarrow V$ where $V$ is a neighbourhood of $s_1(M)$ in $M$ with the structure of an $X^\Delta$-bundle over $s_1(M).$ \\
\nt On a manifold $M$ with edge $s_1(M)$ we consider spaces of operators  
\beq\label{21.E.sesp}
L^\mu (M,{\bf{g}})\subset L_{\textup{cl}}^\mu (s_0(M))\,\,\,\mbox{for}\,\,\,{\bf{g}}=(\gamma ,\gamma -\mu ,\Theta ),
\eeq
$s_0(M)=M\setminus  s_1(M).$ Modulo smoothing operators, to be specified below, the operators are locally near the edge in the splitting of variables $(r,x,y)\in X^\wedge\times \Omega ,\Omega \subseteq \R^q,\,q=\textup{dim}\,s_1(M),$ of the form
\beq\label{21.E.seloc}
r^{-\mu }\Op_{r,y}(p)\,\,\,\mbox{for a}\,\,\,p(r,y,\rho ,\eta ):=\tilde{p}(r,y,r\rho ,r\eta ),
\eeq
\beq\label{21.E.selocp}
\tilde{p}(r,y,\td{\rho} ,\td{\eta} )\in C^\infty (\overline{\R}_+\times \Omega ,L_{\textup{cl}}^\mu (X;\R_{\td{\rho} ,\td{\eta} }^{1+q})).
\eeq
Here $\Op_{r,y}(b)u(r,y):=\iint e^{i(r-r')\rho +i(y-y')\eta }b(r,y,r',y',\rho ,\eta )u(r',y')dr'dy'\dbar\rho \dbar\eta $ for a corresponding operator-valued amplitude function $b,$ and $\dbar\rho :=(2\pi )^{-1}d\rho ,\dbar\eta:=(2\pi )^{-q}d\eta $ (the variable $x$ is suppressed in most cases). In general, for $b(r,y,r',y',\newline\rho ,\eta )\in C^\infty (\R_+\times \Omega \times \R_+\times \Omega,L_{\textup{cl}}^\mu (X;\R^{1+q}))$ we have $\Op_{r,y}(b)\in L^\mu _{\textup{cl}}(\R_+\times X\times \Omega ),$ and any such operator defines a continuous map $C_0^\infty (\R_+\times X\times \Omega )\rightarrow C^\infty (\R_+\times X\times \Omega ).$\\

\nt Let us assume for the moment that $M$ is a compact manifold with edge (the modifications in the non-compact case will be commented afterwards). Let $U_0,\dots,U_N$ be an open covering of the edge $s_1(M)$ by coordinate neighbourhoods, $\{\varphi _j\}_{j=0,\dots,N}$ a subordinate partition of unity, and $\{\varphi '_j\}_{j=0,\dots,N}$ a system of functions $\varphi _j'\in C_0^\infty (U_j),\varphi _j'\succ \varphi _j$ for all $j.$ Moreover, let $\theta \prec \theta  '$ be cut-off functions on the half-axis, and choose $\psi \prec \psi '$ in $C_0^\infty (s_0(M))$ such that $\textup{dist}\,(\textup{supp}\,\psi ,s_1(M))$ is small enough. Then every $A\in L_{\textup{cl}}^\mu (s_0(M))$ can be written in the form 
\beq\label{21.E.sefctglob}
A=\sum_{j=0}^N \theta  \varphi _jB_j\theta '\varphi '_j+\psi A_{\textup{int}}\psi '+C
\eeq
with the following ingredients: $C\in L^{-\infty }(s_0(M)),A_{\textup{int}}\in L_{\textup{cl}}^\mu (s_0(M)),$ and $B_j:=\Op_{r,y}(b_j)$ for amplitude functions $b_j(r,y,\rho ,\eta )\in C^\infty (\R_+\times \Omega ,L^\mu _{\textup{cl}}(X;\R_{\rho ,\eta }^{1+q})).$ Here we use some convenient abbreviations. We identify $U_j$ with an open set $\Omega \subseteq \R^q$ via local coordinates on the edge (for instance, we may take $\Omega =\R^q$), and then, via the charts $U_j\rightarrow \Omega $ we identify $\varphi _j,\varphi _j'$ with functions in $C^\infty _0(\Omega ).$ Moreover, $\Op_{r,y}(b_j)$ is interpreted in combination with the pull back from the local stretched wedge $X^\wedge\times \Omega $ to a corresponding open subset of $s_0(M)$. \\

\nt The operators $A\in L^\mu (M,{\bf{g}})$ will be of a similar structure, namely, $C\in  L^{-\infty } (M,{\bf{g}})$ (the definition will be given below, cf. the formula \eqref{21.E.glsmabb}), $A_{\textup{int}}$ is as in \eqref{21.E.sefctglob}, and 
\beq\label{21.E.sefctb}
B_j:=\Op_y(a_j) \,\,\,\mbox{for an operator-valued amplitude function}\,\,\, a_j(y,\eta )
\eeq
that is of the form 
\beq\label{21.E.sfct}
a_j(y,\eta ):=\epsilon r^{-\mu }\Op_r(p_j)(y,\eta )\epsilon ' \,\,\textup{mod}\, C^\infty (\Omega ,L^{-\infty }(X^\wedge;\R_\eta ^q))
\eeq
for $p_j(r,y,\rho ,\eta )$ as in \eqref{21.E.seloc}, \eqref{21.E.selocp}, moreover, $\epsilon \prec \epsilon ',\theta \prec \epsilon ,\theta '\prec \epsilon '$ are cut-off functions, and the remainder in \eqref{21.E.sfct} is a specific smoothing correction
term coming from edge quantisation, plus contributions that are involved in the control of asymptotics (i.e. smoothing Mellin plus Green symbols). \\

\nt In order to produce $a_j(y,\eta )$ we drop $j$ for a while and apply the edge quantisation, starting from operator functions $r^{-\mu }p(r,y,\rho ,\eta )$ of the form \eqref{21.E.seloc}, \eqref{21.E.selocp}. Here we focus on $r^{-\mu }\Op_r(p)(y,\eta )$ and later on pass to $\Op_y(r^{-\mu }\Op_r(p))=r^{-\mu }\Op_{r,y}(p).$
Next we fix cut-off functions $\omega ''\prec \omega \prec \omega ',$ and set $\chi :=1-\omega ,\chi ':=1-\omega ''.$ For a function $\varphi \in C^\infty (\R_+)$ we write $\varphi _\eta (r):=\varphi (r[\eta ]);$ here $\eta \rightarrow [\eta ]$ is any fixed strictly positive function in $C^\infty (\R^q)$ such that $[\eta ]=|\eta |$ for $|\eta |\geq \varepsilon $ for some $\varepsilon >0.$ We have
\beq\label{21.E.seq1}
r^{-\mu }\Op_r(p)(y,\eta )=r^{-\mu }\omega _\eta \Op_r(p)(y,\eta )\omega '_\eta +r^{-\mu }\chi _\eta \Op_r(p)(y,\eta )\chi '_\eta +c(y,\eta )
\eeq
where $c(y,\eta )=r^{-\mu }\{\omega _\eta\Op_r(p)(y,\eta )(1-\omega '_\eta )+(1-\omega _\eta )\Op_r(p)(y,\eta )\omega ''_\eta \} $ belongs to $C^\infty (\Omega ,L^{-\infty }(X^\wedge;\R_\eta ^q)).$ Since $\Op_y(c)\in L^{-\infty }(X^\wedge\times \Omega )$ we may ignore $c(y,\eta )$ and continue with the first two summands on the right of \eqref{21.E.seq1}.
\begin{thm}\label{21.T.melq0}
Let $p,\td{p}$ be as in \eqref{21.E.seloc}, \eqref{21.E.selocp}; then there is an
\beq\label{21.E.seq1h}
h(r,y,z,\eta ):=\td{h}(r,y,z,r\eta )\,\,\,\mbox{for an}\,\,\,\td{h}(r,y,z,\td{\eta} )\in C^\infty (\overline{\R}_+\times \Omega ,M^\mu _\Ocal(X,\R^q_{\td{\eta}}))
 \eeq
 such that
 \beq\label{21.E.seq12h}
\Op_r(p)(y,\eta )=\op_M^\beta (h)(y,\eta ) \,\,\textup{mod}\,C^\infty(\Omega ,L^{-\infty }(X^\wedge;\R_\eta ^q))
 \eeq
 for every $\beta \in \R.$
\end{thm}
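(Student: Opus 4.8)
The plan is to reduce the edge-quantisation statement \eqref{21.E.seq12h} to the purely one-dimensional Mellin-quantisation result already available on the half-axis, namely Theorem~\ref{11.T.melconv}, applied with the extra covariable $\tilde\eta$ treated as a parameter. First I would observe that the hypothesis \eqref{21.E.selocp} says $\tilde p(r,y,\td\rho,\td\eta)\in C^\infty(\overline\R_+\times\Omega, L^\mu_{\textup{cl}}(X;\R^{1+q}_{\td\rho,\td\eta}))$, so for fixed $y$ the function $\tilde p(r,y,\td\rho,\td\eta)$, viewed with $\td\eta$ incorporated into the parameter space, lies in $C^\infty(\overline\R_+, L^\mu_{\textup{cl}}(X;\R_{\td\rho};\R^q_{\td\eta}))$ — i.e.\ it is exactly of the type \eqref{11.E.asp} but with operators on $X$ replaced by parameter-dependent operators on $X$ with parameter $\td\eta$. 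Since $p(r,y,\rho,\eta)=\tilde p(r,y,r\rho,r\eta)$, the substitution $\td\rho\mapsto r\rho$ is the same substitution as in Theorem~\ref{11.T.melconv}, and the remaining substitution $\td\eta\mapsto r\eta$ is harmless because it only rescales the parameter. Thus I would apply the parameter-dependent version of Theorem~\ref{11.T.melconv} (the parameter being $\td\eta$) to obtain an $\td h(r,y,z,\td\eta)\in C^\infty(\overline\R_+\times\Omega,M^\mu_\Ocal(X;\R^q_{\td\eta}))$, unique modulo $C^\infty(\overline\R_+\times\Omega,M^{-\infty}_\Ocal(X;\R^q_{\td\eta}))$, such that $\Op_r(p)(y,\eta)=\op_M^\beta(h)(y,\eta)$ modulo a $y$-dependent, $\eta$-dependent smoothing operator on $X^\wedge$, with $h(r,y,z,\eta):=\td h(r,y,z,r\eta)$ as in \eqref{21.E.seq1h}, and this for every $\beta\in\R$ by the $\beta$-independence assertion of that theorem.

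Next I would address the precise nature of the error term: Theorem~\ref{11.T.melconv} in its parameter-dependent form gives a remainder in $C^\infty(\overline\R_+\times\Omega, L^{-\infty}(X^\wedge;\R^q_{\td\eta}))$ after the substitution $\td\eta\mapsto r\eta$, so I must check that this is what \eqref{21.E.seq12h} demands, namely a remainder in $C^\infty(\Omega,L^{-\infty}(X^\wedge;\R^q_\eta))$. Since the substitution $\td\eta\mapsto r\eta$ maps Schwartz-in-$\td\eta$ operator families to families that are still smoothing on $X^\wedge\times$(fibre) and, together with the smoothness in $r$ up to $r=0$, integrate against $e^{i(y-y')\eta}$ to yield a globally smoothing operator on $X^\wedge\times\Omega$, the error is of the required class; I would cite the relevant statement in \cite[Section 2.2.2]{Schu20} for the parameter-dependent packaging, exactly as the cited statement of Theorem~\ref{11.T.melconv} does in the parameter-free case. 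Uniqueness of $\td h$ modulo $C^\infty(\overline\R_+\times\Omega,M^{-\infty}_\Ocal(X;\R^q_{\td\eta}))$ is inherited verbatim from the uniqueness clause of Theorem~\ref{11.T.melconv} with parameters.

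The only genuinely nontrivial point — and the step I expect to be the main obstacle — is the passage to the \emph{parameter-dependent with an additional covariable} form of Theorem~\ref{11.T.melconv}: one must be sure that the Mellin-quantisation (Mellin kernel cut-off / Taylor-expansion-and-holomorphic-completion) construction is uniform in the extra parameter $\td\eta$ and produces a symbol that is \emph{classical} and \emph{holomorphic} jointly in $(z,\td\eta)$, i.e.\ genuinely an element of $M^\mu_\Ocal(X;\R^q_{\td\eta})$ in the sense recalled after Theorem~\ref{11.T.cut}. This is a routine but non-vacuous extension: it follows because the kernel cut-off operator and the holomorphic-completion argument of Theorem~\ref{11.T.cut} are stated (in \cite[Section 2.2.2]{Schu20}) in the parameter-dependent variant $M^\mu_\Ocal(X;\R^l)$, and one simply absorbs $\td\eta$ into the parameter $\lambda\in\R^l$ with $l=q$; the $C^\infty$-dependence on $(r,y)$ is preserved because all constructions are continuous in Fréchet topologies. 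Once this is granted, defining $h(r,y,z,\eta):=\td h(r,y,z,r\eta)$ and invoking the $\beta$-independence completes the proof.
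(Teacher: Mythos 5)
The paper itself gives no proof of Theorem~\ref{21.T.melq0}: it is quoted as a known result of the edge calculus (cf.\ \cite{Schu33}, \cite[Section~2.2.2 and 3.2]{Schu20}, \cite{Gil2}), immediately after the parameter\--dependent framework of Theorems~\ref{11.T.cut} and~\ref{11.T.melconv} has been set up precisely so that this statement can be used. So there is no in\--paper argument to compare against; your reduction --- apply the parameter\--dependent form of Theorem~\ref{11.T.melconv} with $\td\eta$ absorbed into the parameter $\lambda\in\R^{l}$, obtain $\td h(r,y,z,\td\eta)\in C^\infty(\overline\R_+\times\Omega,M^\mu_\Ocal(X;\R^q_{\td\eta}))$ with $\Op_r(p_1)(y,\td\eta)=\op_M^\beta(\td h)(y,\td\eta)$ modulo $C^\infty(\Omega,L^{-\infty}(X^\wedge;\R^q_{\td\eta}))$ for $p_1(r,y,\rho,\td\eta):=\td p(r,y,r\rho,\td\eta)$, then substitute $\td\eta=r\eta$ --- is the standard route and is essentially correct, including your identification of the kernel cut-off uniformity in the extra parameter as the only genuinely new ingredient.

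The one place where your justification is off target is the substitution step. What must be shown is that the remainder \emph{family} itself lies in $C^\infty(\Omega,L^{-\infty}(X^\wedge;\R^q_\eta))$ \emph{before} any $\Op_y$ is applied, so the remark about integrating against $e^{i(y-y')\eta}$ to get a globally smoothing operator on $X^\wedge\times\Omega$ is not the relevant point (that is a later step of the calculus, cf.\ the discussion around \eqref{21.E.seq1}). The correct and short argument is: since both $p$ and $h$ are left symbols in $r$, evaluating at $\td\eta=r\eta$ in the symbol amounts to evaluating the Schwartz kernel $K_R(r,x,r',x';y,\td\eta)$ of the remainder $R(y,\td\eta)\in C^\infty(\Omega,\Scal(\R^q_{\td\eta},L^{-\infty}(X^\wedge)))$ at $\td\eta=r\eta$ in its first variable. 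On a compact subset of $X^\wedge\times X^\wedge$ one has $r\geq c>0$, hence $\langle r\eta\rangle^{-N}\leq C\langle\eta\rangle^{-N}$, and each $\eta$- or $r$-derivative hitting the slot $r\eta$ produces at most a polynomial factor that is absorbed by the rapid decay; therefore Schwartz decay in $\td\eta$ with values in $C^\infty(X^\wedge\times X^\wedge)$ transfers to Schwartz decay in $\eta$, i.e.\ to membership in $L^{-\infty}(X^\wedge;\R^q_\eta)=\Scal(\R^q,L^{-\infty}(X^\wedge))$, smoothly in $y$. With this replacement your argument is complete; the statement for every $\beta$ then follows from Cauchy's theorem since $\td h$ is holomorphic in $z$.
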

This allows us to write 
\beq\label{21.E.seq2}
r^{-\mu }\Op_r(p)(y,\eta )=r^{-\mu }\omega _\eta \op_M^{\gamma -n/2} (h)(y,\eta ) \omega '_\eta +r^{-\mu }\chi _\eta \Op_r(p)(y,\eta )\chi '_\eta 
\eeq
modulo another remainder in $C^\infty (\Omega ,L^{-\infty }(X^\wedge;\R_\eta ^q))$ that can be dropped again. Now let us choose cut-off functions $\epsilon ''\prec \epsilon \prec \epsilon '.$ Similarly as \eqref{21.E.seq1} we can decompose once again the left hand side of \eqref{21.E.seq2} as
\beq\label{21.E.seq3}
r^{-\mu }\Op_r(p)(y,\eta )=r^{-\mu }\epsilon  \Op_r(p)(y,\eta )\epsilon '+r^{-\mu }(1-\epsilon ) \Op_r(p)(y,\eta )(1-\epsilon '')
\eeq
modulo a smoothing remainder, and then
\begin{equation}\label{21.E.seq4}
r^{-\mu }\Op_r(p)(y,\eta )=a(y,\eta ) + r^{-\mu }(1-\epsilon ) \Op_r(p)(y,\eta )(1-\epsilon '')
\end{equation}
modulo a smoothing remainder, where
\beq\label{21.E.seq5}
a(y,\eta )=r^{-\mu }\epsilon \{\omega _\eta \op_M^{\gamma -n/2} (h)(y,\eta ) \omega '_\eta +\chi _\eta \Op_r(p)(y,\eta )\chi '_\eta\}\epsilon '.
\eeq
The second summand on the right of \eqref{21.E.seq4} is supported far from $r=0,$ and it may be integrated in the interior part of \eqref{21.E.sefctglob}. Therefore, \eqref{21.E.seq5} remains as the specific part of the edge quantisation coming from $r^{-\mu }\Op_r(p)(y,\eta ).$ 
\begin{rem}\label{21.R.seq4}
The modification of $r^{-\mu }\Op_r(p)(y,\eta )$ by a smoothing operator functions close to the edge does not affect the final results concerning parametrices and asymptotics. The precise information will be obtained by adding smoothing Mellin and Green terms that automatically adjust what we lost during the edge quantisation $r^{-\mu }\Op_r(p)(y,\eta )\rightarrow a(y,\eta )$ which is not canonical anyway. 
\end{rem}
\nt The main motivation of edge quantisation is the continuity of operators in weighted edge spaces. First note that when  $H$ is a Hilbert space with group action $\kappa =\{\kappa _\lambda \}_{\lambda \in \R_+}$ (i.e. a strongly continuous group of isomorphisms $\kappa _\lambda :H\rightarrow H$ with $\kappa _\lambda \kappa_ \nu =\kappa _{\lambda \nu} $ for all $\lambda ,\nu \in\R_+$) we have the space $\Wcal^s(\R^q,H),$ defined to be the completion of $\Scal(\R^q,H)$ with respect to the norm $\|\langle\eta \rangle^s\kappa^{-1} _{\langle\eta \rangle}\hat{u} (\eta )\|_{L^2(\R^q,H)} .$ Here $\hat{u} (\eta )=F_{y\rightarrow \eta }u(\eta )$ is the Fourier transform. A similar definition gives us spaces $\Wcal^s(\R^q,E)$ for a Fr\'echet space $E$ with group action. This allows us to form the spaces
\beq\label{21.E.sespac}
\Wcal^s(\R^q,\Kcal^{s,\gamma }(X^\wedge))\,\,\,\mbox{and}\,\,\,\Wcal^s(\R^q,\Kcal_P^{s,\gamma }(X^\wedge))
\eeq
for any asymptotic type $P$ associated with the weight data $(\gamma ,\Theta ).$ Now on a compact manifold $M$ with edge we have the global weighted spaces
\beq\label{21.E.glbspac}
H^{s,\gamma }(M)\,\,\,\mbox{and}\,\,\,H_P^{s,\gamma }(M)
\eeq
which are subspaces of $H^s_{\textup{loc}}(M\setminus s_1(M)),$ locally near $s_1(M)$ modelled on the respective spaces \eqref{21.E.sespac}.\\
\begin{rem}\label{21.R.module} 
The operator of multiplication by a function $\varphi \in C^\infty (\mathbb{M})$ induces a continuous operator \begin{equation}
\varphi :H^{s,\gamma }(M)\rightarrow H^{s,\gamma }(M)\,\,\,\mbox{for every}\,\,\, s,\gamma \in \R, 
\end{equation}
and the corresponding map $C^\infty (\mathbb{M})\rightarrow  \mathcal{L}(H^{s,\gamma }(M))$ is continuous. An analogous result is true of the spaces $H_P^{s,\gamma }(M)$ when the asymptotic type $P$ satisfies the shadow condition.
\end{rem}
\nt The space of smoothing operators of the edge calculus
\beq\label{21.E.glsmoo}
L^{-\infty }(M,{\bf{g}})\,\,\,\mbox{for}\,\,\,{\bf{g}}=(\gamma ,\gamma -\mu ,\Theta )
\eeq
is the set of all $C\in \bigcap _{s\in \R}\Lcal(H^{s,\gamma }(M),H^{\infty ,\gamma -\mu }(M))$ such that
\beq\label{21.E.glsmabb}
C:H^{s,\gamma }(M)\rightarrow H_P^{\infty ,\gamma -\mu }(M),C^*:H^{s,-\gamma +\mu }(M)\rightarrow H_Q^{\infty ,-\gamma }(M)
\eeq
are continuous for every $s\in \R,$ for certain $C$-dependent asymptotic types $P$ and $Q,$ associated with the weight data $(\gamma -\mu ,\Theta )$ and $(-\gamma ,\Theta ),$ respectively. Here $C^*$ is the formal adjoint of $C$ with respect to the $H^{0,0}(M)$-scalar product. Let $L_{\textup{flat}}^{-\infty }(M,{\bf{g}})$ denote the subspace of those $C\in L^{-\infty }(M,{\bf{g}})$ such that 
\beq\label{21.E.glsmaflat}
C:H^{s,\gamma }(M)\rightarrow H_\Theta ^{\infty ,\gamma -\mu }(M),C^*:H^{s,-\gamma +\mu }(M)\rightarrow H_\Theta ^{\infty ,-\gamma }(M)
\eeq
are continuous for every $s\in \R.$
\nt Let us represent the stretched manifold $\mathbb{M}$ associated with $M$ close to $\partial \mathbb{M}$ by a system of \virg{charts} $\mathbb{U}\rightarrow \overline{\R}_+\times X\times \Omega $ (cf. also the formula \eqref{modeg}) where $\mathbb{U}$ runs over a covering of the above-mentioned set $\mathbb{V},$ $\Omega \subseteq \R^q$ open. Without loss of generality we assume that the transition maps $\overline{\R}_+\times X\times \Omega \rightarrow \overline{\R}_+\times X\times \tilde{\Omega }$ are independent of $r$ close to $r=0.$ In other words, near $\partial \mathbb{M}$ we may fix a global axial variable $r\in \overline{\R}_+.$ This allows us the construction of a strictly positive function ${\bf{r}}^1\in C^\infty (\mathbb{M})$ that is equal to $r$ for $0\leq r\leq \varepsilon $ for some $\varepsilon >0$ in the chosen representation of $\mathbb{U}$ in the variables $(r,x,y).$ Let us form the power ${\bf{r}}^\beta $ and call ${\bf{r}}^\beta \in C^\infty (M\setminus s_1(M))$ a weight function with weight $\beta \in \R.$\\
\nt The following definition employs operator-valued symbols referring to group actions between the involved spaces. The generalities will be given in Section 2.4 below.
\begin{defn}\label{21.D.seq4}
A $g(y,\eta )\in \bigcap_{s\in \R} C^\infty (U\times \R^q,\Lcal(\Kcal^{s,\gamma }(X^\wedge),\Kcal^{\infty ,\gamma -\mu }(X^\wedge)))$ for open $U\subseteq \R^p$ is said to be a Green symbol of order $\nu \in \R,$ associated with the weight data ${\bf{g}}:=(\gamma ,\gamma -\mu ,\Theta ),$ if
\beq\label{21.E.segr}
g(y,\eta )\in S^\nu _{\textup{cl}}(U\times \R^q;\Kcal^{s,\gamma ;e}(X^\wedge),\Kcal_P^{\infty ,\gamma -\mu ;\infty }(X^\wedge))
\eeq
and 
\beq\label{21.E.ssgr}
g^*(y,\eta )\in S^\nu _{\textup{cl}}(U\times \R^q;\Kcal^{s,-\gamma +\mu ;e}(X^\wedge),\Kcal_Q^{\infty ,-\gamma  ;\infty }(X^\wedge))
\eeq
for every $s,e\in \R,$ for certain $g$-dependent asymptotic types $P$ and $Q,$ associated with the weight data $(\gamma -\mu ,\Theta )$ and $(-\gamma ,\Theta ),$  respectively, where $g^*$ means the pointwise formal adjoint of $g$ with respect to the $\Kcal^{0,0}(X^\wedge)$-scalar product. Let
\beq\label{21.E.spagr}
R^\mu _G(U\times \R^q,{\bf{g}})_{P,Q}
\eeq
denote the space of all such $g(y,\eta ),$  and set $R^\mu _G(U\times \R^q,{\bf{g}}):=\bigcup _{P,Q}R^\mu _G(U\times \R^q,{\bf{g}})_{P,Q}.$ Moreover, let $R^\nu _G(U\times \R^q)_\Ocal$ be the subspace of those $g(y,\eta )$ such that analogues of the conditions \eqref{21.E.segr} and \eqref{21.E.ssgr} hold when the target spaces on the right are both replaced by $\Kcal^{\infty ,\infty  ;\infty }(X^\wedge)$.
\end{defn} 
\begin{rem}\label{21.R.mu}
For every $\varphi \in C^\infty (\overline{\R}_+\times U)$ which is independent of $r$ for large $r$ and $g \in R^\mu _G(U\times \R^q,{\bf{g}})$ we have $\varphi g,g\varphi \in R^\mu _G(U\times \R^q,{\bf{g}}).$
\end{rem}

\nt Observe that $g(y,\eta )\in R^\nu _G(U\times \R^q)_\Ocal$ is equivalent to the conditions
\beq\label{21.E.segrfl}
g(y,\eta ),g^*(y,\eta )\in S^\nu _{\textup{cl}}(U\times \R^q;\Kcal^{s,\gamma ;e}(X^\wedge),\Kcal^{\infty ,\infty  ;\infty }(X^\wedge))
\eeq
for all $s,\gamma ,e\in \R.$
\begin{rem}\label{21.R.salt}
An alternative to the above-mentioned edge quantisation  $r^{-\mu }\Op_r(p)\newline (y,\eta )\rightarrow a(y,\eta )$ is to set 
\beq\label{21.E.segalg}
a(y,\eta ):=r^{-\mu }\epsilon \op_M^{\gamma -n/2} (h)(y,\eta ) \epsilon ';
\eeq 
more precisely, we have
\beq\label{21.E.segalgtr}
\begin{split}
r^{-\mu }\epsilon \{\omega _\eta \op_M^{\gamma -n/2} (h)(y,\eta ) \omega '_\eta &+\chi _\eta \Op_r(p)(y,\eta )\chi '_\eta\}\epsilon '\\
&-r^{-\mu }\epsilon \op_M^{\gamma -n/2} (h)(y,\eta ) \epsilon '\in R^\mu _G(\Omega \times \R^q)_\Ocal,
\end{split}
\eeq
cf. \cite{Gil2}.
\end{rem}

\nt Another typical ingredient of amplitude functions of the edge calculus are the smoothing Mellin symbols. They are defined in connection with weight data ${\bf{g}}=(\gamma ,\gamma -\mu ,\Theta )$ for a weight interval $\Theta :=(-(k+1),0],k\in \N,$ and have the form
\beq\label{21.E.smels}
m(y,\eta ):=r^{-\mu }\omega _\eta \sum_{j=0}^kr^j\sum_{|\alpha |\leq j}\op_M^{\gamma _{j\alpha }-n/2}(f_{j\alpha })(y)\eta ^\alpha \omega '_\eta 
\eeq
for cut-off functions $\omega ,\omega ',$ Mellin symbols $f_{j\alpha }(y,z)\in C^\infty (\Omega ,M^{-\infty }_{R_{j\alpha }}(X))$ for constant (in $y$) Mellin asymptotic types $R_{j\alpha }$ and weights $\gamma _{j\alpha }$ where $\gamma -j\leq \gamma _{j\alpha }\leq \gamma $ and $\pi _\C R_{j\alpha }\cap \Gamma _{(n+1)/2-\gamma _{j\alpha }}=\emptyset$ for all $j,\alpha .$\\

\nt Recall that
\beq\label{21.E.melsymb}
m(y,\eta )\in S^\mu _{\textup{cl}}(\Omega \times \R^q;\Kcal^{s,\gamma }(X^\wedge),\Kcal^{\infty ,\gamma -\mu }(X^\wedge)), 
\eeq
and
\beq\label{21.E.asmelsymb}
m(y,\eta )\in S^\mu _{\textup{cl}}(\Omega \times \R^q;\Kcal_P^{s,\gamma }(X^\wedge),\Kcal_Q^{\infty ,\gamma -\mu }(X^\wedge)), 
\eeq
for every $s\in \R$ and every asymptotic type $P,$ associated with the weight data $(\gamma ,\Theta )$ for some resulting asymptotic type $Q,$ associated with the weight data $(\gamma -\mu ,\Theta ).$ Moreover, if we change the involved cut-off functions $\omega ,\omega '$ or the $\gamma _{j\alpha }$ (but not the Mellin symbols $f_{j\alpha }$), then we modify $m(y,\eta )$ by a Green symbol for certain asymptotic types. Let $R_{M+G}^\mu (\Omega \times \R^q,{\bf{g}})$ denote the space of all $m(y,\eta )+g(y,\eta )$ for arbitrary $m$ of the form \eqref{21.E.smels} and $g\in R_G^\mu (\Omega \times \R^q,{\bf{g}}).$\\

\nt Finally, let
\beq\label{21.E.sedgess}
R^\mu (\Omega \times \R^q,{\bf{g}})\,\,\,\mbox{for}\,\,\,{\bf{g}}=(\gamma ,\gamma -\mu ,\Theta )
\eeq
denote the set of all operator functions of the form
\beq\label{21.E.sedgeamp}
a(y,\eta ):=a_\Ocal(y,\eta )+(m+g)(y,\eta )
\eeq
for $(m+g)(y,\eta )\in R_{M+G}^\mu (\Omega \times \R^q,{\bf{g}}),$ and $ a_\Ocal(y,\eta ):=r^{-\mu }\epsilon \{\omega _\eta \op_M^{\gamma -n/2} (h)(y,\eta ) \omega '_\eta +\chi _\eta \Op_r(p)(y,\eta )\chi '_\eta\}\epsilon ',$ or, alternatively, $a_\Ocal(y,\eta ):=r^{-\mu }\epsilon \op_M^{\gamma -n/2} (h)(y,\eta )\epsilon '.$\\
\nt Summing up we have spaces of $(y,\eta )$-dependent operator functions on the infinite stretched cone, namely,
$$R^\mu (\Omega \times \R^q,{\bf{g}})\supset R^\mu _{M+G}(\Omega \times \R^q,{\bf{g}})\supset R^\mu _G(\Omega \times \R^q,{\bf{g}}).$$
\begin{defn}\label{21.D.sealg}
The space 
\beq\label{21.E.sealg}
L^\mu (M,{\bf{g}})\,\,\,\mbox{for}\,\,\,{\bf{g}}=(\gamma ,\gamma -\mu ,\Theta ),
\eeq
is defined to be the set of all operators 
\beq\label{21.E.edgectglob}
A=\sum_{j=0}^N \theta  \varphi _j\Op_y(a_j)\theta '\varphi '_j+\psi A_{\textup{int}}\psi '+C
\eeq
for arbitrary $C\in L^{-\infty }(M,{\bf{g}}), A_{\textup{int}}\in L^\mu _{\textup{cl}}(s_0(M)),\psi ,\psi '\in C_0^\infty (s_0(M)),$ and $a_j(y,\eta )\newline\in R^\mu (\Omega \times \R^q,{\bf{g}}).$ Moreover, let $L_{M+G}^\mu (M,{\bf{g}})\,\,(L_G^\mu (M,{\bf{g}}))$ denote the subspace of all elements of \eqref{21.E.sealg} such that $A_{\textup{int}}\equiv 0$ and $a_j\in R_{M+G}^\mu (\Omega \times \R^q,{\bf{g}})\,\,(R_G^\mu (\Omega \times \R^q,{\bf{g}})).$
\end{defn}
\nt For ${\bf{g}}=(\gamma ,\gamma -\mu ,\Theta ), \Theta =(-\infty ,0],$ we set
\beq\label{21.E.edgectgl}
L^\mu (M,{\bf{g}}):=\bigcap _{k\in \N}L^\mu (M,(\gamma ,\gamma -\mu ,(-(k+1) ,0])).
\eeq
Analogously we define $L_{M+G}^\mu (M,{\bf{g}})\,\,(L_G^\mu (M,{\bf{g}}))$ in the case $\Theta =(-\infty ,0]$ as intersections of the corresponding spaces for finite $\Theta .$
\begin{rem}\label{21.R.calg}
\nt In asymptotic considerations it may be useful to distinguish between flat operators of the edge calculus and operators encoding asymptotic properties. Let 
\beq\label{21.E.flat}
L^\mu _{\textup{flat}}(M,{\bf{g}}) \,\,\,\mbox{for}\,\,\, {\bf{g}}=(\gamma ,\gamma -\mu ,\Theta ),
\eeq
$\Theta =(-(k+1) ,0],$ denote the set of all $A\in L^\mu (M,{\bf{g}})$ for which in the representation \eqref{21.E.edgectglob} the involved local smoothing Mellin symbols belong to $ C^\infty (\Omega ,M^{-\infty }_{\mathcal{O}}(X)),$ the local Green symbols to $R^\mu _G(\Omega \times \R^q)_{\Theta ,\Theta },$ and $C\in L^{-\infty }_{\textup{flat}}(M,{\bf{g}}).$ Then we have
\beq\label{21.E.flat+as}
L^\mu (M,{\bf{g}})=L^\mu _{\textup{flat}}(M,{\bf{g}}) +L_{M+G}^\mu (M,{\bf{g}}).
\eeq
Spaces of flat operators for $\Theta =(-\infty ,0]$ are defined again as intersections of corresponding spaces for finite $\Theta ;$ in particular, the local Green symbols then belong to $R^\mu _G(\Omega \times \R^q)_{\mathcal{O}},$ and we have a relation like \eqref{21.E.flat+as} also in the case of infinite $\Theta .$
\nt There is an immediate generalisation of Definition \ref{21.D.sealg} to the case of a non-compact manifold $M$ with edge. The smoothing operators are based on mapping properties of the kind $H^{s,\gamma }_{\textup{[comp)}}(M)\rightarrow H^{\infty ,\gamma -\mu }_{\textup{[loc)},P}(M)$ and $H^{s,-\gamma +\mu }_{\textup{[comp)}}(M)\rightarrow H^{\infty ,-\gamma  }_{\textup{[loc)},Q}(M),$ respectively, while the first sum on the right of \eqref{21.E.edgectglob} is to be replaced by a corresponding locally finite sum referring to an infinite covering $\{U_j\}_{j\in \N}$ of the edge.  
\end{rem}
\begin{rem}\label{21.R.cdffalg}
The space $\textup{Diff}^\mu _{\textup{deg}}(M)$ belongs to \eqref{21.E.edgectgl} for every weight $\gamma \in \R.$
\end{rem}
\begin{rem}\label{21.R.cdsm}
We have $L^\mu (M,{\bf{g}})\cap L^{-\infty }(s_0(M))= L_{M+G}^\mu (M,{\bf{g}}).$
\end{rem}
\nt The operator spaces of Definition \ref{21.D.sealg} have some natural properties in connection with the above-mentioned weight functions. In the following consideration we assume, for simplicity, $\Theta :=(-\infty ,0].$ Let us call an $A\in L^\mu (M,{\bf{g}})$ for ${\bf{g}}=(\gamma ,\gamma -\mu ,\Theta ), $ flat of order $N\in \N$ if ${\bf{r}}^{-N}A\in L^\mu (M,{\bf{g}}).$\\
\nt Observe that the operators $\psi A_{\textup{int}}\psi'$ in \eqref{21.E.edgectglob} are flat of infinite order (i.e. for arbitrary $N$).
\begin{prop}\label{21.P.cdffalg}
Every $A\in L^\mu (M,{\bf{g}})$ can be written in the form
\begin{equation}\label{21.E.cdev}
A=\sum_{l=0}^N{\bf{r}}^lA_l  \,\,\,\textup{mod}\,L_G^\mu (M,{\bf{g}})
\end{equation}
for any $N\in \N,$ for suitable $A_l\in L^\mu (M,{\bf{g}})$ where $A_l$ is flat of order $l$ for every $l.$
\end{prop}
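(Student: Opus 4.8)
The plan is to reduce the statement to three local computations, one for each of the three types of building blocks appearing in the representation \eqref{21.E.edgectglob} of an operator $A\in L^\mu(M,{\bf g})$ with ${\bf g}=(\gamma,\gamma-\mu,\Theta)$, $\Theta=(-\infty,0]$. First I would observe that, by definition of $L^\mu(M,{\bf g})$ as an intersection over finite weight intervals, it suffices to prove the assertion for each finite $\Theta=(-(k+1),0]$; the expansion \eqref{21.E.cdev} is a finite sum of operators in $L^\mu(M,{\bf g})$, and the flat-of-order-$l$ property for ${\bf r}^lA_l$ follows directly from ${\bf r}^{-l}({\bf r}^lA_l)=A_l\in L^\mu(M,{\bf g})$, so the only real content is producing the $A_l$ modulo $L_G^\mu(M,{\bf g})$. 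I would then treat the three summands of \eqref{21.E.edgectglob} separately, exactly as in the proof of Proposition \ref{13.P.dev} in the conical case.

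The interior part $\psi A_{\textup{int}}\psi'$ is supported away from $s_1(M)$ and hence already flat of arbitrary order, so it contributes entirely to $A_0$ (or may be distributed as one wishes); similarly $C\in L^{-\infty}(M,{\bf g})$ is flat of infinite order modulo $L_G^\mu$, and in fact $L^{-\infty}(M,{\bf g})$ splits as $L^{-\infty}_{\textup{flat}}(M,{\bf g})$ plus a Green-type remainder, so it causes no trouble. For the Mellin--Green amplitude $a_j(y,\eta)=a_\Ocal(y,\eta)+(m+g)(y,\eta)$, the Green part $g$ is absorbed into the error modulo $L_G^\mu(M,{\bf g})$. For $a_\Ocal$, written via the holomorphic Mellin symbol $h(r,y,z,\eta)=\tilde h(r,y,z,r\eta)$ of Theorem \ref{21.T.melq0}, I would apply the Taylor expansion of $\tilde h(r,y,z,\tilde\eta)$ in $r$ at $r=0$ up to order $N$, with integral remainder; each Taylor coefficient $\tfrac1{l!}(\partial_r^l\tilde h)(0,y,z,\tilde\eta)$ again lies in $C^\infty(\Omega,M^\mu_\Ocal(X;\R^q_{\tilde\eta}))$, so that $r^l$ times the corresponding edge-quantised operator function belongs to ${\bf r}^lL^\mu(M,{\bf g})$, and the remainder of the Taylor expansion supplies an operator in ${\bf r}^{N+1}L^\mu(M,{\bf g})$, which we may fold into $A_N$ (or simply keep the expansion to order $N$ and absorb the tail). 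The $r$-independent transition maps near $\partial\mathbb M$ guarantee that ${\bf r}$ and $r$ are interchangeable in these localizations, so the global weight function ${\bf r}^l$ may be substituted for the local $r^l$.

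The genuinely delicate summand is the smoothing Mellin symbol $m(y,\eta)$ of the form \eqref{21.E.smels}, precisely because of the weight shifts $\gamma_{j\alpha}$ with $\gamma-j\le\gamma_{j\alpha}\le\gamma$: a term $r^j\op_M^{\gamma_{j\alpha}-n/2}(f_{j\alpha})$ already carries an explicit factor $r^j$, so one is tempted to assign it to $A_j$, but it lives on a weight line $\Gamma_{(n+1)/2-\gamma_{j\alpha}}$ different from the reference line $\Gamma_{(n+1)/2-\gamma}$ dictated by ${\bf g}$. I would handle this exactly as in the conical proof: for $j=0$ one has $\gamma_{0\alpha}=\gamma$ automatically, so the $j=0$ terms go into $A_0$; for $j\ge1$ one chooses, for each $R_{j\alpha}$, an intermediate weight $\tilde\gamma_{j\alpha}$ with $\gamma-1\le\tilde\gamma_{j\alpha}\le\gamma$ and $\pi_\C R_{j\alpha}\cap\Gamma_{(n+1)/2-\tilde\gamma_{j\alpha}}=\emptyset$ (possible since $\pi_\C R_{j\alpha}$ is discrete), and shifts the weight line from $\gamma_{j\alpha}$ to $\tilde\gamma_{j\alpha}$; by Cauchy's theorem the difference of the two Mellin operators is a smoothing Mellin plus Green term whose $r$-order is strictly higher, and these error terms are redistributed into the next $A_{l}$'s (or, if one only wants equality modulo $L_G^\mu$, into the Green remainder after enough iterations — the point being that changing the weight line of a smoothing Mellin symbol modifies it only by a Green symbol, cf. the remark after \eqref{21.E.asmelsymb}). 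Iterating this bookkeeping over $j=1,\dots,k$ produces operators $\tilde m_{l}$ with ${\bf r}^{-l}\tilde m_{l}\in L_{M+G}^\mu(M,{\bf g})\subset L^\mu(M,{\bf g})$, and collecting the contributions from all three summands yields $A=\sum_{l=0}^N{\bf r}^lA_l\bmod L_G^\mu(M,{\bf g})$ with each $A_l\in L^\mu(M,{\bf g})$ flat of order $l$. The main obstacle, then, is purely organisational: keeping track, through the weight-line shifts and through the edge quantisation $r^{-\mu}\Op_r(p)\to a_\Ocal$, of exactly which residual terms are Green and which merely have a higher power of ${\bf r}$, and verifying that the cumulative error after $N$ steps lands in $L_G^\mu(M,{\bf g})$ rather than in some larger space.
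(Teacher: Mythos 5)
Your overall strategy --- reduce to the local amplitude functions, treat the interior part and $C$ as flat of infinite order, Taylor-expand $\tilde h(r,y,z,\tilde\eta)$ at $r=0$ for the holomorphic Mellin part, and shift weight lines for the smoothing Mellin part at the cost of Green remainders --- is the paper's. But there is a genuine gap in your treatment of the smoothing Mellin symbol \eqref{21.E.smels}: you group the terms by the explicit power $r^j$ and propose to assign $r^{-\mu}\omega_\eta r^j\op_M^{\gamma_{j\alpha}-n/2}(f_{j\alpha})\eta^\alpha\omega'_\eta$ to ${\bf r}^jA_j$ (your notation even drops the factor $\eta^\alpha$). This fails for $\alpha\neq 0$: dividing by $r^j$ leaves the uncompensated factor $\eta^\alpha$, and $r^{-\mu}\omega_\eta\op_M^{\gamma'-n/2}(f)\omega'_\eta\,\eta^\alpha$ is twisted homogeneous of order $\mu+|\alpha|$ for large $|\eta|$ (indeed $\kappa_\lambda^{-1}r^{-\mu}\kappa_\lambda=\lambda^\mu r^{-\mu}$ and $(\lambda\eta)^\alpha=\lambda^{|\alpha|}\eta^\alpha$, while $\op_M$ of an $r$-independent symbol commutes with $\kappa_\lambda$). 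Hence this is not an element of $R^\mu_{M+G}(\Omega\times\R^q,{\bf g})$, nor of the form \eqref{21.E.smels}, which requires every $\eta^\alpha$ to be accompanied by at least $r^{|\alpha|}$; your $A_j$ would then lie in $L^{\mu+|\alpha|}$ rather than in $L^\mu(M,{\bf g})$, so the assertion of the proposition would fail for it.

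The correct bookkeeping --- and the one point where the edge case genuinely differs from the conical proof you invoke --- is to write $r^j\eta^\alpha=r^{j-|\alpha|}(r\eta)^\alpha$ and group by the residual exponent $l=j-|\alpha|$: the paper's $m_l$ collects all terms with $|\alpha|=j-l$, so that $r^{-l}m_l$ is again a smoothing Mellin symbol of the admissible form. In particular the terms with $|\alpha|=j$ for every $j$ (not only $j=0$) belong to the order-zero piece $m_0$. With this reindexing your weight-shift argument (replacing $\gamma_{j\alpha}$ by some $\tilde\gamma_{j\alpha}\in[\gamma-1,\gamma]$ avoiding $\pi_\C R_{j\alpha}$ once an explicit factor $r$ is available, at the cost of Green remainders) goes through exactly as you describe and matches the paper. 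The remaining parts of your argument (interior term, $C$, Taylor expansion of $\tilde h$ at $r=0$) are correct and coincide with the paper's proof.
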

\begin{proof}
By virtue of Definition \ref{21.D.sealg} it suffices to write the local amplitude functions $a_j$ in \eqref{21.E.edgectglob} in the form $a_j=\sum_{l=0}^Nr^la_{jl} + a_{j,(N)}$ modulo $R^\mu _G(\Omega \times \R^q,{\bf{g}}),$ for elements $a_{jl},a_{j,(N)}\in R^\mu (\Omega \times \R^q,{\bf{g}})$ such that $r^{-N}a_{j,(N)}\in R^\mu (\Omega \times \R^q,{\bf{g}}).$ For any fixed $j$ we now drop the subscript and write $a_j(y,\eta )=:a(y,\eta )$ in the form 
$a(y,\eta ):=a_\Ocal(y,\eta )+m(y,\eta )$ modulo $R^\mu _G(\Omega \times \R^q,{\bf{g}}),$ cf. \eqref{21.E.sedgeamp}, where $a_\Ocal(y,\eta ):=r^{-\mu }\epsilon \op_M^{\gamma -n/2} (h)(y,\eta )\epsilon '$ for an $h(r,y,z,\eta )=\tilde{h}(r,y, z,r\eta ),\tilde{h}(r,y, z,\tilde{\eta })\in C^\infty (\overline{\R}_+\times \Omega ,M_{\mathcal{O}}(X;\R^q_{\tilde{\eta }})),$ and $m(y,\eta )$ as in \eqref{21.E.smels}. What concerns $a_\Ocal(y,\eta )$ we simply apply Taylor's formula to $\tilde{h}(r,y, z,\tilde{\eta })$ at $r=0.$ For $m(y,\eta )$ the desired representation is immediate from \eqref{21.E.smels} when $\gamma _{j\alpha }=\gamma $ for all $j,\alpha ,$ namely, $m(y,\eta )=\sum_{l=0}^Nr^lm_l(y,\eta ),$ say, for $N\geq k,$ with
$$m_0(y,\eta ):=r^{-\mu }\omega _\eta \sum_{j=0}^kr^j\sum_{|\alpha |=j}\op_M^{\gamma _{j\alpha }-n/2}(f_{j\alpha })(y)\eta ^\alpha \omega '_\eta ,\dots,$$
$$m_l(y,\eta ):=r^{-\mu }\omega _\eta \sum_{j=l}^kr^{j-l}\sum_{|\alpha |=j-l}\op_M^{\gamma _{j\alpha }-n/2}(f_{j\alpha })(y)\eta ^\alpha \omega '_\eta .$$\\

\nt In the general case we can write $m(y,\eta )=\sum_{l=0}^Nr^l\tilde{m}_l(y,\eta )$, for
$$\tilde{m}_0(y,\eta ):=m_0(y,\eta )+rm_1(y,\eta ),$$
$$\tilde{m}_l(y,\eta ):=r^{-\mu +1}\omega _\eta \sum_{j=l+1}^kr^{j-(l+1)}\sum_{|\alpha |=j-(l+1)}\op_M^{\tilde{\gamma }_{j\alpha }-n/2}(f_{j\alpha })(y)\eta ^\alpha \omega '_\eta ,$$
for $l=1,2,\dots,$ for weights $\tilde{\gamma }_{j\alpha }$ satisfying the conditions $\gamma -1\leq \tilde{\gamma} _{j\alpha }\leq \gamma $ and $\pi _\C R_{j\alpha }\cap \Gamma _{(n+1)/2-\tilde{\gamma} _{j\alpha }}=\emptyset$ for all $|\alpha |=j-(l+1) .$ Here we use the fact that the change of weights which is always possible in the indicated way, gives rise to admitted Green remainders.
\end{proof}
\begin{ex}\label{22.E.dec}
Let us illustrate Proposition \ref{21.P.cdffalg} for $A\in \textup{Diff}^\mu _{\textup{deg}}(M),$ cf. Remark \ref{21.R.cdffalg}. Any such $A$ has the form $A=A_{\textup{mel}}+A_{\textup{flat}}$ where $A_{\textup{flat}}:=\psi A_{\textup{int}}\psi '$ in the notation of \eqref{21.E.edgectglob}, $A_{\textup{int}}\in \textup{Diff}^\mu ( s_0(M)),$ and $A_{\textup{mel}}$ is locally near $s_1(M)$ in the splitting of variables $(r,x,y)\in \R_+\times X\times \Omega $ a Mellin operator $A_{\textup{mel}}=\epsilon  (r)r^{-\mu} \Op_y\op_M^{\gamma -n/2}(h)$ for an $h(r,y,z,\eta )=\tilde{h}(r,y,z,r\eta ),\,\tilde{h}(r,y,z,\tilde{\eta })\in C^\infty (\overline{\R}_+\times \Omega ,M^\mu _{\mathcal{O}}(X;\R^q_{\tilde{\eta }}))$ (here taking values in parameter-dependent differential operators on $X$), and a cut-off function $\epsilon  .$ As noted in the proof of Proposition \ref{21.P.cdffalg} the $l$-th term of \eqref{21.E.cdev} is locally near $r=0$ of the form $\epsilon  (r)r^{-\mu+l} \Op_y\op_M^{\gamma -n/2}(h_l)$ for $h_l(r,y,z,\eta):=\tilde{h}_l(y,z,r\eta),$ determined by the Taylor expansion $\tilde{h}(r,y,z,\tilde{\eta})=\sum_{l\in \N} r^l\tilde{h}_l(y,z,\tilde{\eta})$ at $r=0.$
\end{ex}
\begin{prop}\label{21.P.mod}
$A\in L^\mu (M,{\bf{g}})$ and $\varphi \in C^\infty (\mathbb{M})$ entails $\varphi A,A\varphi \in L^\mu (M,{\bf{g}}).$
\end{prop}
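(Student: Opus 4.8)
The plan is to write $A$ in the form \eqref{21.E.edgectglob} of Definition \ref{21.D.sealg} and to push the multiplication operator $\varphi$ into each of the three constituents, using that multiplication operators commute with one another; it suffices to treat $\varphi A$, since $A\varphi$ is handled by the mirror-image computation, or, once it is available, by the formal adjoint relation $(A\varphi)^*=\bar\varphi A^*$ together with the case of $\varphi A$. The interior term is immediate: $\varphi|_{s_0(M)}\psi\in C_0^\infty(s_0(M))$ and $A_{\textup{int}}\in L^\mu_{\textup{cl}}(s_0(M))$, so $(\varphi\psi)A_{\textup{int}}\psi'$ is again an interior term of the required kind. For the smoothing term $C\in L^{-\infty}(M,{\bf{g}})$ I would use the mapping characterisation \eqref{21.E.glsmabb} together with Remark \ref{21.R.module}: multiplication by $\varphi$ is continuous on the weighted spaces, and, after passing to the shadow completion of the asymptotic type involved, on the spaces with asymptotics; since $(\varphi C)^*=C^*\circ(\text{multiplication by }\bar\varphi)$ and $\bar\varphi\in C^\infty(\mathbb{M})$, the adjoint condition follows as well, whence $\varphi C\in L^{-\infty}(M,{\bf{g}})$.

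The substance of the argument is the local amplitude functions. After a harmless modification making $\varphi$ independent of $r$ for large $r$ (which changes nothing, since the outer cut-off $\theta$ localises everything near $r=0$), one has $\varphi\,\theta\varphi_j\Op_y(a_j)\theta'\varphi_j'=\theta\varphi_j\bigl(\varphi\Op_y(a_j)\bigr)\theta'\varphi_j'$, and since among the variables of $\Op_y$ the function $\varphi$ depends only on the edge variable $y$ (its dependence on the fibre variables $(r,x)$ being that of a multiplication operator on $X^\wedge$), $\varphi\Op_y(a_j)=\Op_y(\varphi a_j)$ with $(\varphi a_j)(y,\eta)=\varphi(\cdot,y)\circ a_j(y,\eta)$. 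It then remains to show $\varphi a_j\in R^\mu(\Omega\times\R^q,{\bf{g}})$, and for this I decompose $a_j$ as in \eqref{21.E.sedgeamp} into an $\Ocal$-part $a_\Ocal$, a smoothing Mellin part $m$ of the form \eqref{21.E.smels}, and a Green symbol $g$. For the Green part, $\varphi g\in R^\mu_G(\Omega\times\R^q,{\bf{g}})$ by Remark \ref{21.R.mu} (here the modification of $\varphi$ at large $r$ is used). For the $\Ocal$-part, using the form $a_\Ocal(y,\eta)=r^{-\mu}\epsilon\op_M^{\gamma-n/2}(h)(y,\eta)\epsilon'$ permitted by Remark \ref{21.R.salt}, I would re-quantise the multiplication operator $\varphi\epsilon$ by Theorem \ref{11.T.melconv} as a holomorphic Mellin operator $\op_M^{\gamma-n/2}(h_\varphi)$, $h_\varphi\in C^\infty(\overline{\R}_+\times\Omega,M^0_\Ocal(X))$, modulo a smoothing remainder supported near $r=0$, and then compose in the Mellin calculus: $(\varphi\epsilon)\,\op_M^{\gamma-n/2}(h)=\op_M^{\gamma-n/2}(h_\varphi\#h)$ modulo a smoothing term, where $h_\varphi\#h\in C^\infty(\overline{\R}_+\times\Omega,M^\mu_\Ocal(X;\R^q_{\tilde\eta}))$ still has the rescaled structure $(\,\cdot\,)(r,y,z,r\eta)$ because the $r$-derivatives occurring in the Mellin Leibniz product come paired with powers of $\eta$; thus $\varphi a_\Ocal$ is again an $\Ocal$-part modulo a Green symbol. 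For the smoothing Mellin part, I would commute $\varphi$ past the factors $r^{-\mu},\omega_\eta,r^j,\eta^\alpha,\omega'_\eta$ of \eqref{21.E.smels} to the immediate left of each $\op_M^{\gamma_{j\alpha}-n/2}(f_{j\alpha})(y)$, re-quantise $\varphi$ again by Theorem \ref{11.T.melconv}, and apply Theorem \ref{11.T.mmu}: as $f_{j\alpha}$ is independent of $r$, the composition collapses to the pointwise product $h_\varphi f_{j\alpha}$, a smoothing Mellin symbol with the \emph{same} Mellin asymptotic type $R_{j\alpha}$ (a holomorphic factor adds no poles, and finite rank times bounded is finite rank); a finite Taylor expansion in $r$ then restores the form \eqref{21.E.smels}, the terms with $r$-power $\geq k+1$ as well as the smoothing remainders being absorbed into Green symbols. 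Putting the three contributions together gives $\varphi a_j\in R^\mu(\Omega\times\R^q,{\bf{g}})$, hence $\theta\varphi_j\Op_y(\varphi a_j)\theta'\varphi_j'$ has the shape required by Definition \ref{21.D.sealg}, and $\varphi A\in L^\mu(M,{\bf{g}})$.

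The hard part is exactly this last step: multiplication operators do not commute with weighted Mellin pseudo-differential operators, so $\varphi$ cannot simply be slid through $\op_M^{\gamma-n/2}$; the remedy is to re-quantise $\varphi$ itself as a holomorphic Mellin operator (Theorem \ref{11.T.melconv}) and then use the Mellin symbol calculus (Theorem \ref{11.T.mmu}), after which one must still verify three book-keeping points — that Mellin asymptotic types survive the product, that the rescaled $r\eta$-dependence of the holomorphic amplitude survives the Mellin Leibniz product, and that every smoothing remainder genuinely lands in the Green class, in the spirit of Remark \ref{21.R.seq4}. None of these is deep, but organising them cleanly is where the work lies. An alternative, shorter route once the composition theorem of the edge calculus is at hand is to observe that $\varphi$ defines an element of $L^0(M,(\gamma,\gamma,\Theta))$ — its local amplitude being a zeroth-order operator of exactly the type just analysed — and to read off $\varphi A$ as a composition; this, however, presupposes machinery of which the present proposition is one of the building blocks.
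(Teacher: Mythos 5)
Your proof is correct and follows the same overall decomposition as the paper: split $A$ according to \eqref{21.E.edgectglob}, handle $C$ via Remark \ref{21.R.module}, absorb $\varphi$ into the interior term, treat the local amplitudes $a_\Ocal+m+g$ separately, and reduce right multiplication to left multiplication via the formal adjoint (Theorem \ref{22.T.ad}), which is exactly the paper's route. The one place you diverge is the holomorphic part $a_\Ocal$: you re-quantise $\varphi$ as a Mellin operator via Theorem \ref{11.T.melconv} and invoke the Mellin Leibniz product, flagging this as ``the hard part.'' The paper instead records left multiplication as trivial, and it is: a multiplication operator is \emph{exactly} $\op_M^\beta(\varphi)$ with amplitude independent of $z$ and depending only on the left variable $r$, so the Leibniz product $\sum_\alpha \frac{1}{\alpha!}(\partial_z^\alpha\varphi)\,(-r\partial_r)^\alpha h$ collapses to the plain product $\varphi h$ with no correction terms and no smoothing remainder; one simply absorbs $\varphi(r,\cdot,y)$ as a left factor of the amplitude $\tilde h$, which stays in $C^\infty(\overline{\R}_+\times\Omega,M^\mu_\Ocal(X;\R^q_{\tilde\eta}))$ with the rescaled $r\eta$-dependence untouched since $\varphi$ is $\eta$-independent. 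So your machinery is valid but heavier than needed, and the genuinely non-trivial direction is not the one you highlight: it is \emph{right} multiplication (amplitudes depending on the right variables $r',y'$ must be reduced to left symbols), which both you and the paper correctly delegate to the adjoint argument. Your treatment of the smoothing Mellin part (Taylor expansion of $\varphi$ at $r=0$, preservation of the Mellin asymptotic types, absorption of the $r^{k+1}$-remainder into Green symbols for finite $\Theta$) supplies detail the paper omits and is sound.
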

\begin{proof}
From Remark \ref{21.R.module} we see immediately that $L^{-\infty }(M,{\bf{g}})$ is preserved by the multiplication by $\varphi .$ Moreover, the operators $\psi A_{\textup{int}}\psi '$ in \eqref{21.E.edgectglob} obviously survive such multiplications. The amplitude functions $a_j\in R^\mu (\Omega \times \R^q,{\bf{g}})$ can be multiplied from the left by $\varphi $ which is trivial for $a_{\mathcal{O}}.$ Moreover, Theorem \ref{22.T.ad} below on the level of edge amplitide functions shows that $a_{\mathcal{O}}$ also admits the multiplication from the right by $\varphi .$The smoothing Mellin + Green symbols can be multilied from both sides by $\varphi .$ 
 \end{proof}

\subsection{Ellipticity in the edge algebra} 
The ellipticity of an operator $A\in L^\mu (M,{\bf{g}}),{\bf{g}}=(\gamma ,\gamma -\mu ,\Theta ),$
relies on the principal symbolic structure $\sigma (A)=(\sigma _0(A),\sigma _1(A)),$ generalising the one of $A\in \textup{Diff}^\mu _{\textup{deg}}(M),$ mentioned at the very beginning. The interior symbol $\sigma _0(A)$ is nothing else than the homogeneous principal symbol of $A$ as an operator in $L^\mu _{\textup{cl}}(s_0(M)).$ Moreover, locally close to $s_1(M)$ the operator $A$ is equal to \eqref{21.E.seloc} for \eqref{21.E.selocp}, modulo a smoothing operator. Then, $\tilde{\sigma }_0(A):=\tilde{p}_{(\mu )}(r,y,\tilde{\rho },\tilde{\eta  }),$ the parameter-dependent principal symbol of \eqref{21.E.selocp}, is the pseudo-differential analogue of \eqref{i.E.redy}, also called the rescaled symbol of $A.$ Moreover, $\sigma _1(A),$ the homogeneous principal edge symbol of the operator $A,$ is the pseudo-differential generalisation of \eqref{isyeg}, defined as
\beq\label{22.E.edgesy}
\begin{split}
\sigma _1(A)(y,\eta )&:=r^{-\mu }\{\omega _{|\eta |}\op_M^{\gamma -n/2} (h_0)(y,\eta ) \omega '_{|\eta |}+\chi _{|\eta |} \Op_r(p_0)(y,\eta )\chi '_{|\eta |}\}\\
&+r^{-\mu }\omega _{|\eta |} \sum_{j=0}^kr^j\sum_{|\alpha |= j}\op_M^{\gamma _{j\alpha }-n/2}(f_{j\alpha })(y)\eta ^\alpha \omega '_{|\eta |} +g_{(\mu )}(y,\eta ),
\end{split}
\eeq
$(y,\eta )\in T^*(s_1(M))\setminus 0,\,\omega _{|\eta |}(r):=\omega (r|\eta |),$ etc. Here \beq\label{22.E.edgesy1}
p_0(r,y,\rho ,\eta ):=\tilde{p}(0,y,r\rho ,r\eta ),\,h_0(r,y,z,\eta ):=\tilde{h}(0,y,z ,r\eta ).
\eeq
 Moreover, $m+g$ is the smoothing Mellin plus Green part of the local amplitude function of $A,$ cf. \eqref{21.E.sedgeamp}, where $m$ is written in the form \eqref{21.E.smels}, while $g_{(\mu )}$ is the (twisted homogeneous) principal symbol of the Green symbol $g.$ The principal edge symbol defines a family of continuous operators
\beq\label{22.E.princ}
\sigma _1(A)(y,\eta ): \Kcal^{s,\gamma }(X^\wedge)\rightarrow \Kcal^{s-\mu ,\gamma -\mu }(X^\wedge),
\eeq
smoothly depending on $(y,\eta )$ and homogeneous in the sense
\beq\label{22.E.princhom}
\sigma _1(A)(y,\lambda \eta )= \lambda ^\mu \kappa _\lambda \sigma _1(A)(y,\eta ) \kappa _\lambda ^{-1}
\eeq
for all $\lambda \in \R_+,(y,\eta )\in T^*(s_1(M))\setminus 0.$
\begin{defn}\label{22.D.ell}
Let $A\in L^\mu (M,{\bf{g}}),{\bf{g}}=(\gamma ,\gamma -\mu ,\Theta ).$
\begin{itemize}
\item[\textup{(i)}] The operator $A$ is called $\sigma _0$-elliptic, if $\sigma _0(A)$ never vanishes on $T^*(s_0(M))\setminus 0,$ and if in addition close to the edge $\tilde{\sigma }_0(A)(r,x,y,\rho ,\xi ,\eta )\neq 0$ for $(\rho ,\xi ,\eta )\neq 0,$ up to $r=0.$
\item[\textup{(ii)}] The operator $A$ is called $\sigma _1$-elliptic if \eqref{22.E.princhom} is a family of isomorphisms for all $(y,\eta )\in T^*(s_1(M))\setminus 0$ for any $s\in \R.$
\end{itemize}
We call $A$ elliptic if it satisfies the conditions $\textup{(i)},\textup{(ii)}.$
 \end{defn}
 \begin{rem}\label{22.R.ellcon}
The definition of ellipticity admits several generalisations, cf. \cite{Schu32}, namely, in form of bijective families of $2\times 2$ block matrices
\beq\label{22.E.princblock}
\sigma _1(\Acal)(y,\eta ): \Kcal^{s,\gamma }(X^\wedge)\oplus J_{-,y}\rightarrow \Kcal^{s-\mu ,\gamma -\mu }(X^\wedge)\oplus J_{+,y}, \sigma _1(A)=\sigma _1(\mathcal{A})_{11},
\eeq
where $J_{\pm ,y}$ are the fibres of complex vector bundles $J_{\pm }$ over $s_1(M).$ If we write $\sigma _1(\Acal)=(\sigma _1(\Acal_{ij}))_{i,j=1,2},$ then $\sigma _1(\Acal_{21})$ is the symbol of an elliptic edge trace condition and $\sigma _1(\Acal_{12})$ of an elliptic edge potential condition, while $\sigma _1(\Acal_{22})$ is the principal symbol of a classical pseudo-differential operator over $s_1(M)$ (operating between sections of $J_{\pm }$).
The bijectivity of \eqref{22.R.ellcon} is an analogue of the Shapiro-Lopatinskij condition, known from the ellipticity of (pseudo-differential) boundary value problems, cf. \cite{Bout1}, or \cite{Remp2}. In our applications we focus on the case when $J_{\pm }$ are of fibre-dimension $0$ which corresponds to the bijectivity of \eqref{22.E.princ}. In general the bundles $J_{\pm }$ may depend on the weight $\gamma $.  It is an interesting aspect of ellipticity in the edge case to identify those weights where the extra conditions are not necessary. Another generalisation concerns the case when for topological reasons the family \eqref{22.E.princ} does not admit an extension to a bijective block matrix family \eqref{22.E.princblock}. Such a situation is well-known in analogous form in boundary value problems, cf. \cite{Atiy8}. A block matrix pseudo-differential algebra that extends the one of \cite{Bout1} which covers all these situations, including the parametrices in the elliptic case, has been introduced in \cite{Schu37}; the edge analogue in a similar framework has been studied in \cite{Schu42}.
 \end{rem}
\nt Before we start explaining the structure of parametrices of elliptic edge operators we complete the material on the operator spaces of Definition \ref{21.D.sealg}. \\

\nt Let us consider $L^\mu (M,{\bf{g}})$ for ${\bf{g}}=(\gamma ,\gamma -\mu ,\Theta )$ and set
\beq\label{22.E.symbsp}
\sigma \big(L^\mu (M,{\bf{g}})\big):=\{\sigma (A)=(\sigma _0(A),\sigma_1(A)):A \in L^\mu (M,{\bf{g}})\}
\eeq
which is a vector space. The components of $\sigma (A)$ are uniquely determined by $A,$ and hence we have a linear map
\beq\label{22.E.symmap}
\sigma :L^\mu (M,{\bf{g}})\rightarrow \sigma \big(L^\mu (M,{\bf{g}})\big).
\eeq
\begin{prop}
The principal symbolic map \eqref{22.E.symmap} has a right inverse
\beq\label{22.E.symrigh}
\op:\sigma \big(L^\mu (M,{\bf{g}})\big)\rightarrow L^\mu (M,{\bf{g}}),
\eeq
i.e. $\sigma \circ \op=\textup{id}$ on the space $\sigma \big(L^\mu (M,{\bf{g}})\big).$ 
\end{prop}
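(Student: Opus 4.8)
\nt The plan is to realise $\op$ by running the edge quantisation of Section 2.1 backwards: given $(\sigma_0,\sigma_1)\in\sigma\big(L^\mu(M,{\bf{g}})\big)$ we read off from the two components their local symbolic data, reassemble an operator of the form \eqref{21.E.edgectglob} from that data, and check that $\sigma$ returns $(\sigma_0,\sigma_1)$. That $(\sigma_0,\sigma_1)$ lies in the image of $\sigma$ is needed only to guarantee the internal consistency of the data: the rescaled symbol $\tilde{\sigma}_0$ associated with $\sigma_0$ via \eqref{i.E.redy} is then smooth up to $r=0$ and of edge-degenerate form, and the frozen-coefficient operator built into $\sigma_1$ near $r=0$ has $\tilde{\sigma}_0|_{r=0}$ as principal symbol. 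To obtain a \emph{linear} right inverse we fix once and for all all auxiliary data — an atlas and subordinate partition of unity $\{\varphi_j\}$ on $s_1(M)$ with $\varphi_j\prec\varphi_j'$; cut-off functions $\theta\prec\theta'$, $\omega''\prec\omega\prec\omega'$, $\epsilon''\prec\epsilon\prec\epsilon'$ and $\psi\prec\psi'$ arranged so that $\theta\equiv1$ near $r=0$, $\psi\equiv1$ outside a small neighbourhood of $s_1(M)$, and $\theta\sum_j\varphi_j+\psi\equiv1$; a Kohn--Nirenberg operator convention for $L^\mu_{\textup{cl}}(s_0(M))$; and fixed excision functions — so that every ingredient below depends linearly on $(\sigma_0,\sigma_1)$.

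\nt First the interior part. Using the fixed operator convention we form $A_{\textup{int}}\in L^\mu_{\textup{cl}}(s_0(M))$ with $\sigma_0(A_{\textup{int}})=\sigma_0$, arranged in the collar of $s_1(M)$ to equal $r^{-\mu}\Op_{r,y}(p)$ for $p(r,y,\rho,\eta)=\tilde{p}(r,y,r\rho,r\eta)$, where $\tilde{p}\in C^\infty(\overline{\R}_+\times\Omega,L^\mu_{\textup{cl}}(X;\R^{1+q}))$ is a representative with homogeneous principal symbol $\tilde{\sigma}_0$ whose value at $r=0$ is the frozen-coefficient symbol $\tilde{p}_0$ encoded in $\sigma_1$; such a $\tilde{p}$ exists by the compatibility noted above and can be chosen linearly (e.g.\ by patching $\tilde{p}_0$ with an excision of $\tilde{\sigma}_0$). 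By Theorem \ref{21.T.melq0}, $\Op_r(p)(y,\eta)$ equals $\op_M^\beta(h)(y,\eta)$ modulo a smoothing remainder, for a holomorphic Mellin symbol $h(r,y,z,\eta)=\tilde{h}(r,y,z,r\eta)$ with $\tilde{h}\in C^\infty(\overline{\R}_+\times\Omega,M^\mu_\Ocal(X;\R^q))$ whose value at $r=0$ coincides, modulo $C^\infty(\overline{\R}_+\times\Omega,M^{-\infty}_\Ocal(X))$, with the Mellin symbol $h_0$ appearing in $\sigma_1$.

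\nt Now the edge amplitude functions. For every $j$ set $a_j(y,\eta):=a_\Ocal(y,\eta)+m(y,\eta)+g(y,\eta)$, where $a_\Ocal(y,\eta):=r^{-\mu}\epsilon\,\op_M^{\gamma-n/2}(h)(y,\eta)\,\epsilon'$ is the edge quantisation of $r^{-\mu}\Op_r(p)$ (equivalently the form \eqref{21.E.seq5}), where $m(y,\eta)$ is the smoothing Mellin symbol \eqref{21.E.smels} built from the Mellin symbols $f_{j\alpha}$ with $|\alpha|=j$ occurring in $\sigma_1$ (the components with $|\alpha|<j$ do not affect $\sigma_1$ and are set to $0$), and where $g(y,\eta):=\chi(\eta)\,g_{(\mu)}(y,\eta)$ for the twisted-homogeneous principal Green symbol $g_{(\mu)}$ of $\sigma_1$ and a fixed excision function $\chi$ in $\eta\in\R^q$; this last belongs to $R^\mu_G(\Omega\times\R^q,{\bf{g}})$ with principal symbol $g_{(\mu)}$, so $a_j\in R^\mu(\Omega\times\R^q,{\bf{g}})$. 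We then put
\[
\op(\sigma_0,\sigma_1):=\sum_{j=0}^N\theta\varphi_j\Op_y(a_j)\theta'\varphi_j'+\psi A_{\textup{int}}\psi'\in L^\mu(M,{\bf{g}}).
\]
To see $\sigma\big(\op(\sigma_0,\sigma_1)\big)=(\sigma_0,\sigma_1)$: for the interior component one uses that near the edge the interior symbol of $\Op_y(a_j)$ is that of $r^{-\mu}\Op_{r,y}(p)$, hence $\sigma_0$, that $\varphi_j'\equiv1$ on $\textup{supp}\,\varphi_j$ and $\psi'\equiv1$ on $\textup{supp}\,\psi$, and that $\theta\sum_j\varphi_j+\psi\equiv1$; for the edge component one evaluates \eqref{22.E.edgesy} summand by summand, so that the $a_\Ocal$-part reproduces the Mellin and non-degenerate block, the $m$-part reproduces $\sum_{|\alpha|=j}\op_M^{\gamma_{j\alpha}-n/2}(f_{j\alpha})\eta^\alpha$, and the $g$-part reproduces $g_{(\mu)}$. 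Linearity of $\op$ is immediate, since every step above is linear once the auxiliary data are fixed.

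\nt The step I expect to cost the most care is the interplay between the two components of the datum together with the non-uniqueness of the Mellin quantisation: $\sigma_1$ carries near $r=0$ a full frozen-coefficient edge-degenerate operator whose leading part must match the rescaled symbol of $\sigma_0$, whereas the holomorphic Mellin symbol reproduced from $\sigma_0$ is determined only modulo $C^\infty(\overline{\R}_+\times\Omega,M^{-\infty}_\Ocal(X))$. One must verify that these matchings go through, so that the assembled operator realises exactly the prescribed $\sigma_1$ with all residual ambiguities landing in the Green and flat contributions that $\sigma_1$ does not see; this is precisely where membership of $(\sigma_0,\sigma_1)$ in $\sigma\big(L^\mu(M,{\bf{g}})\big)$ enters. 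The cut-off and partition-of-unity bookkeeping for $\sigma_0$, and the check that $\chi(\eta)\,g_{(\mu)}(y,\eta)$ indeed lies in $R^\mu_G$ with principal symbol $g_{(\mu)}$, are routine and proceed as in the standard constructions of the edge calculus.
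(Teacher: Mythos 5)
Your overall route coincides with the paper's: localise near the edge, recover an edge-degenerate interior amplitude $\tilde p$ by a Seeley-type extension whose value at $r=0$ matches the frozen-coefficient data carried by $\sigma _1,$ Mellin-quantise via Theorem \ref{21.T.melq0}, edge-quantise, and reinstate the smoothing Mellin and Green parts of $\sigma _1$ through an excision function in $\eta .$ The one point where your argument breaks is exactly the point you flag as delicate, and your proposed resolution of it is wrong. The Mellin symbol $h$ produced from $p$ agrees with the symbol $h'_0$ prescribed in $\sigma _1$ only modulo $C^\infty (\Omega ,M^{-\infty }_\Ocal(X;\R^q_{\tilde{\eta }}))$ at $r=0,$ and this discrepancy is \emph{not} invisible to the principal edge symbol: by \eqref{22.E.edgesy} the operator $\Op_y(a_\Ocal)$ built from $h$ has edge symbol involving $h_0(r,y,z,\eta )=\tilde{h}(0,y,z,r\eta ),$ cf. \eqref{22.E.inhi}, so it differs from the prescribed first block of $a_1$ by $r^{-\mu }\omega _{|\eta |}\op_M^{\gamma -n/2}(h'_0-h_0)(y,\eta )\omega '_{|\eta |}.$ This is a twisted homogeneous family of order $\mu $ which is flat (holomorphic and smoothing in $z$) but in general nonzero, and it is not of Green type, so it cannot be absorbed into your $g$-term either; only $\sigma _0$ is insensitive to changing $h$ by an element of $C^\infty (\overline{\R}_+\times \Omega ,M^{-\infty }_\Ocal(X;\R^q_{\tilde{\eta }})).$

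The repair is the extra correction summand of the paper's proof: one adds $r^{-\mu }\Op_y(\omega _\eta \op_M^{\gamma -n/2}(h'_0-h_0)(y,\eta )\omega '_\eta ),$ cf. \eqref{22.E.inhiC}, which is a legitimate flat operator of the edge calculus of order $\mu $ whose interior symbol vanishes and whose principal edge symbol exactly cancels the discrepancy. With that term included your construction does give $\sigma \circ \op=\textup{id};$ without it only the $\sigma _0$-component is reproduced. The remaining bookkeeping in your write-up (setting the $|\alpha |<j$ components of the smoothing Mellin data to zero, realising $g_{(\mu )}$ via an excision function, the partition-of-unity argument for $\sigma _0,$ and the optional linearity of $\op$ after fixing the auxiliary data) is consistent with the paper's argument.
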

\begin{proof}
The construction of $\op$ is local in nature. In particular we may argue for symbols or operators over an open set  $V\subset M$ separately for $\textup{dist}\,(V,s_0(M))>0$ and $V\cap s_0(M)=\emptyset.$ The assertion in the first case is well-known; here only $\sigma _0$ is involved. Therefore, we concentrate on a neighbourhood close to $s_1(M),$ represented by $X^\Delta \times \Omega ,$ in the splitting of variables $(r,x,y)\in \Sigma ^\wedge \times \Omega $ where $\Sigma \subseteq \R^n$ corresponds to a chart $U\rightarrow \Sigma  $ on $X.$ The first symbolic component is a function of the form $r^{-\mu }p_{(\mu )}(r,x,y,\rho ,\xi ,\eta )$ with $p_{(\mu )}(r,x,y,\rho ,\xi ,\eta )=\tilde{p}_{(\mu )}(r,x,y,r\rho ,\xi ,r\eta )$ for a $\tilde{p}_{(\mu )}(r,x,y,\tilde{\rho },\xi ,\tilde{\eta })\in C^\infty (\overline{\R}_+\times \Sigma \times \Omega \times (\R^{1+n+q}_{\tilde{\rho },\xi ,\tilde{\eta }}\setminus \{0\})).$ Without loss of generality we may assume that $p_{(\mu )}(r,x,y,\rho ,\xi ,\eta )$ vanishes with respect to $x$ off some compact subset of $\Sigma ;$ the general case can be treated by gluing together the constructions for an open covering of $X$ by coodinate neighbourhoods, using a subordinate partition of unity. For a similar reason it suffices to assume $p_{(\mu )}(r,x,y,\rho ,\xi ,\eta )\equiv 0$ for $r>\varepsilon  $ for some small $\varepsilon >0.$  Let $(a_0,a_1)$ be the given element of $\sigma \big(L^\mu (M,{\bf{g}})\big),$ namely,   
\beq\label{22.E.int}
a_0=r^{-\mu }p_{(\mu )}(r,x,y,\rho ,\xi ,\eta ),
 \eeq
 and
 \beq\label{22.E.edopg}
  a_1=r^{-\mu }\{\omega _{|\eta |}\op_M^{\gamma -n/2} (h'_0)(y,\eta ) \omega '_{|\eta |}+\chi _{|\eta |} \Op_r(p'_0)(y,\eta )\chi '_{|\eta |}\}+c_1
\eeq
for
\beq\label{22.E.edopgc1}
c_1:=r^{-\mu }\omega _{|\eta |} \sum_{j=0}^kr^j\sum_{|\alpha |= j}\op_M^{\gamma _{j\alpha }-n/2}(f_{j\alpha })(y)\eta ^\alpha \omega '_{|\eta |} +g_{(\mu )}(y,\eta ),
\eeq
for certain  $p'_0(r,y,\rho ,\eta )=\tilde{p}'_0(y,r\rho ,r\eta ),\tilde{p}'_0(y,\tilde{\rho },\tilde{\eta })\in C^\infty (\Omega ,L^\mu _{\textup{cl}}(X;\R^{1+q}_{\tilde{\rho },\tilde{\eta }}))$ and $h'_0(r,\newline y,z ,\eta )=\tilde{h}'_0(y, z ,r\eta ),\tilde{h}'_0(y,z,\tilde{\eta })\in C^\infty (\Omega ,M_{\mathcal{O}}^\mu (X;\R^q_{ \tilde{\eta }}))$ where $\Op_r(p'_0)(y,\eta )=\op_M^\beta (h'_0)(y,\eta ) \,\,\textup{mod}\,C^\infty(\Omega ,L^{-\infty }(X^\wedge;\R_\eta ^q)).$ The compatibility condition between $a_0$ and $a_1$ consists of the fact that the parameter-dependent principal symbol $\tilde{p}'_{0,(\mu )}(x,\newline y,\tilde{\rho },\xi ,\tilde{\eta }) $ of $\tilde{p}'_0(y,\tilde{\rho },\tilde{\eta })$ satisfies the relation
\beq\label{22.E.intcomp}
p_{(\mu )}(0,x,y,\rho ,\xi ,\eta )=\tilde{p}'_{0,(\mu )}(x,y,r\rho ,\xi ,r\eta ).
 \eeq 
Now a staightforward modification of a well-known extension theorem of Seeley tells us that there is a $\tilde{p}(r,x,y,\tilde{\rho },\xi ,\tilde{\eta })\in C^\infty (\overline{\R}_+\times \Omega ,L^\mu _{\textup{cl}}(X;\R^{1+q}_{\tilde{\rho },\tilde{\eta }}))$ with the given $\tilde{p}_{(\mu )}(r,x,y,\tilde{\rho },\xi ,\tilde{\eta })$ as its parameter-dependent homogeneous principal symbol, and $\tilde{p}(0,y,\tilde{\rho },\tilde{\eta })=\tilde{p}'_0(y,\tilde{\rho },\tilde{\eta }).$ By virtue of Theorem \ref{21.T.melq0} we can choose an $\td{h}(r,y,z,\td{\eta} )\in C^\infty (\overline{\R}_+\times \Omega ,M^\mu _\Ocal(X,\R^q_{\td{\eta}}))$ such that 
$h(r,y,z,\eta ):=\td{h}(r,y,z,r\eta )$ satisfies the relation $\Op_r(p)(y,\eta )=\op_M^\beta (h)(y,\eta ) \,\,\textup{mod}\,C^\infty(\Omega ,L^{-\infty }(X^\wedge;\R_\eta ^q))$ for every $\beta \in \R.$ It follows that $\td{h}(0,y,z,\td{\eta} )-\tilde{h}'_0(y,z,\tilde{\eta })\in C^\infty (\Omega  ,M^{-\infty } _\Ocal(X,\R^q_{\td{\eta}})).$ Finally with the constructed data we form the operator-valued amplitude function \eqref{21.E.seq5} and set $B:=\Op_y(a).$ Then $\sigma _0(B)=a_0$ and 
\beq\label{22.E.inhi}
\sigma _1(B)=r^{-\mu }\omega _{|\eta |}\op_M^{\gamma -n/2} (h_0)(y,\eta ) \omega '_{|\eta |}+r^{-\mu }\chi _{|\eta |} \Op_r(p_0)(y,\eta )\chi '_{|\eta |}
 \eeq 
 for $h_0(r,y,z,\eta )=\td{h}(0,y,z,r\eta ),p_0(r,y,\rho ,\eta )=\tilde{p}(0,y,r\rho ,r\eta).$ For an excision function $\vartheta (\eta )$ in $\R^q$ we form
\beq\label{22.E.inhiC}
C:=\Op_y(\vartheta (\eta )c_1(y,\eta ))+r^{-\mu }\Op_y(\omega _\eta \op_M^{\gamma -n/2} (h'_0-h_0)(y,\eta ) \omega '_\eta ).
 \eeq
Then for $A:=B+C\in L^\mu (B,{\bf{g}})$ we obtain $\sigma _0(A)=a_0,\sigma _1(A)=a_1,$ and hence we may set $A:=\textup{op}(a_0,a_1).$
\end{proof}

\nt Any choice of \eqref{22.E.symrigh} is called an operator convention of the edge calculus.\\ 

\nt The operators  
$A\in L^{\mu -1}(M,{\bf{g}})\subset L_{\textup{cl}}^{\mu -1}(s_0(M))$ have again a homogeneous principal symbol $\sigma _0^{\mu -1}(A)$ of order $\mu -1$ in the standard sense, and a 
homogeneous principal edge symbol $\sigma _1^{\mu -1}(A)$ of order $\mu -1,$ 
\beq\label{22.E.low1}
 \sigma _1^{\mu -1}(A)(y,\eta ): \Kcal^{s,\gamma }(X^\wedge)\rightarrow \Kcal^{s-\mu +1,\gamma -\mu }(X^\wedge),
 \eeq
parametrised by $(y,\eta )\in T^*(s_1(M)\setminus 0),$ (twisted) homogeneous in the sense $\sigma _1^{\mu -1}(A)(y,\lambda \eta )=\lambda ^{\mu -1}\kappa _\lambda ^{-1}\sigma _1^{\mu -1}(A)(y,\eta )\kappa _\lambda ,\,\lambda \in \R_+.$ Successively we now define the operator spaces $L^{\mu -j}(M,{\bf{g}})\,\,\,\mbox{for}\,\,\,{\bf{g}}=(\gamma ,\gamma -\mu ,\Theta ),j=1,2,\dots,$ by the condition of vanishing $\sigma^{\mu -(j-1)}(A)=( \sigma_0^{\mu -(j-1)}(A),\sigma_1^{\mu -(j-1)}(A) ).$ Let $L_{M+G}^{\mu -j}(M,{\bf{g}}):=L_{M+G}^\mu(M,{\bf{g}})\cap L^{\mu -j}(M,{\bf{g}}),L_G^{\mu -j}(M,{\bf{g}}):=L_G^\mu(M,{\bf{g}})\cap L^{\mu -j}(M,{\bf{g}})$ for any $j\in \N.$\\

\begin{thm}\label{22.T.as}
Let $A_j\in L^{\mu -j}(M,{\bf{g}}),j\in \N, \Theta =(-(k+1),0],$ be an arbitrary sequence, and assume that the asymptotic types involved in the Green operators are independent of $j.$ Then there is an $A\in L^{\mu }(M,{\bf{g}})$ such that $A-\sum_{j=0}^NA_j\in L^{\mu -(N+1)}(M,{\bf{g}})$ for every $N\in \N,$ and $A$ is unique modulo $L^{-\infty }(M,{\bf{g}}).$
\end{thm}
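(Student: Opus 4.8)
The plan is to run the classical Borel-type asymptotic-summation argument componentwise on the constituents of the edge calculus and then to reassemble the result according to Definition \ref{21.D.sealg}.

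\emph{Localization.} Using one fixed system of edge charts, a subordinate partition of unity $\{\varphi_l\}$, and fixed cut-off functions $\theta\prec\theta'$, $\psi\prec\psi'$, write each $A_j$ in the form \eqref{21.E.edgectglob}, $A_j=\sum_l\theta\varphi_l\Op_y(a_{j,l})\theta'\varphi'_l+\psi A_{j,\textup{int}}\psi'+C_j$, with $A_{j,\textup{int}}\in L^{\mu-j}_{\textup{cl}}(s_0(M))$, $a_{j,l}\in R^{\mu-j}(\Omega\times\R^q,{\bf{g}})$, and $C_j\in L^{-\infty}(M,{\bf{g}})$ (bringing each $A_j$ to a representation with respect to one common chart system only alters its smoothing remainder, by invariance of the calculus). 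The smoothing terms may be discarded at the outset: for every fixed $N$ the finite sum $\sum_{j=0}^NC_j$ lies in $L^{-\infty}(M,{\bf{g}})\subseteq L^{\mu-(N+1)}(M,{\bf{g}})$ and will thus be absorbed into the required error. It therefore remains to asymptotically sum the interior symbols and, for each $l$, the edge amplitude functions $a_{j,l}$.

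\emph{Componentwise summation.} For the interior parts one invokes the standard asymptotic summation of classical symbols: there is $A_{\textup{int}}\in L^\mu_{\textup{cl}}(s_0(M))$ with $A_{\textup{int}}-\sum_{j=0}^NA_{j,\textup{int}}\in L^{\mu-(N+1)}_{\textup{cl}}(s_0(M))$ for all $N$. For the edge part I decompose, for each $l$ (normalizing so that each summand has order $\leq\mu-j$), $a_{j,l}=r^{-\mu}\epsilon\op_M^{\gamma-n/2}(h_{j,l})\epsilon'+m_{j,l}+g_{j,l}$ modulo $R^{\mu-j}_G$, using the alternative holomorphic form of Remark \ref{21.R.salt} --- the Green discrepancy \eqref{21.E.segalgtr} has the flat asymptotic type and is absorbed into $g_{j,l}$ --- with $h_{j,l}=\tilde{h}_{j,l}(r,y,z,r\eta)$, $\tilde{h}_{j,l}\in C^\infty(\overline{\R}_+\times\Omega,M^{\mu-j}_{\mathcal{O}}(X;\R^q))$, with $m_{j,l}$ a smoothing Mellin symbol of order $\leq\mu-j$ of the form \eqref{21.E.smels}, and with $g_{j,l}\in R^{\mu-j}_G(\Omega\times\R^q,{\bf{g}})_{P,Q}$ where by hypothesis $P,Q$ do not depend on $j$. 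The smoothing Mellin part needs no limiting process: the twisted-homogeneity degrees $\mu-j'+|\alpha|$, $0\leq|\alpha|\leq j'$, realizable by \eqref{21.E.smels} for the finite interval $\Theta=(-(k+1),0]$ all lie in $[\mu-k,\mu]$, so $m_{j,l}=0$ for $j>k$ and $m_l:=\sum_{j=0}^km_{j,l}$ is again of the form \eqref{21.E.smels}. The holomorphic Mellin and Green parts, on the other hand, live in Fréchet spaces carrying a decreasing filtration by order --- $C^\infty(\overline{\R}_+\times\Omega,M^{\bullet}_{\mathcal{O}}(X;\R^q))$ and, because $P,Q$ are fixed, $R^{\bullet}_G(\Omega\times\R^q,{\bf{g}})_{P,Q}$ --- so a Borel construction applies: with an excision function $\chi$ in the parameter ($\tilde\eta$ for the Mellin symbols, $\eta$ for the Green symbols) and $c_j\to\infty$ growing fast enough, $\tilde{h}_l:=\sum_j\chi(\tilde\eta/c_j)\tilde{h}_{j,l}$ and $g_l:=\sum_j\chi(\eta/c_j)g_{j,l}$ converge in the respective spaces and differ from any partial sum by a compactly supported (hence order $-\infty$) term plus a genuinely lower-order tail. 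Crucially one excises only in the parameter, leaving holomorphy in $z$ intact, and fixing $P,Q$ --- which is exactly the hypothesis --- places all the $g_{j,l}$ in one Fréchet space.

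\emph{Reassembly and uniqueness.} Set $a_l:=r^{-\mu}\epsilon\op_M^{\gamma-n/2}(h_l)\epsilon'+m_l+g_l\in R^\mu(\Omega\times\R^q,{\bf{g}})$ with $h_l=\tilde{h}_l(r,y,z,r\eta)$, and $A:=\sum_l\theta\varphi_l\Op_y(a_l)\theta'\varphi'_l+\psi A_{\textup{int}}\psi'\in L^\mu(M,{\bf{g}})$. Localizing $A-\sum_{j=0}^NA_j$ and using the componentwise estimates, together with the fact that the discarded $C_j$ and the compactly supported pieces are smoothing, one checks that $A-\sum_{j=0}^NA_j\in L^{\mu-(N+1)}(M,{\bf{g}})$ for every $N$. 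For uniqueness, if $A'$ has the same property then $A-A'\in L^{\mu-(N+1)}(M,{\bf{g}})$ for all $N$, whence $A-A'\in\bigcap_N L^{\mu-N}(M,{\bf{g}})=L^{-\infty}(M,{\bf{g}})$: an operator with vanishing $\sigma^{\mu-l}$ for all $l$ is smoothing on $s_0(M)$, hence in $L^\mu_{M+G}(M,{\bf{g}})$ by Remark \ref{21.R.cdsm}, and with all homogeneous conormal and edge symbols vanishing it lies in $L^{-\infty}(M,{\bf{g}})$. I expect the main difficulty to be the verification that the holomorphic Mellin and Green component spaces indeed carry the Fréchet-space-with-order-filtration structure needed for the Borel step --- in particular that fixing the Green asymptotic types, as assumed, is what makes the summation possible --- and that the reassembled $a_l$ is a genuine element of $R^\mu(\Omega\times\R^q,{\bf{g}})$, so that $A\in L^\mu(M,{\bf{g}})$; the remaining bookkeeping is routine.
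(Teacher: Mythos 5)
The paper states Theorem \ref{22.T.as} without proof, so there is nothing to compare against directly; judged on its own terms, your architecture (localize, split each $a_{j,l}$ into holomorphic Mellin, smoothing Mellin, Green and interior parts, sum componentwise, reassemble) is the standard and correct one, and two of your componentwise steps are handled well: the observation that the smoothing Mellin part is a \emph{finite} sum for finite $\Theta$, and the treatment of the Green symbols, where excision in $\eta$ alone is legitimate because there the order is measured by the twisted estimates \eqref{24.E.se} in the single covariable $\eta$, and fixing $P,Q$ is exactly what places all $g_{j,l}$ in one Fr\'echet space $R^{\bullet}_G(\Omega\times\R^q,{\bf{g}})_{P,Q}$.

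The genuine gap is in the Borel step for the holomorphic Mellin part. For $\tilde h_{j,l}\in C^\infty(\overline\R_+\times\Omega,M^{\mu-j}_{\mathcal O}(X;\R^q_{\tilde\eta}))$ the order is defined by requiring $\tilde h_{j,l}|_{\Gamma_\beta\times\R^q}\in L^{\mu-j}_{\textup{cl}}(X;\Gamma_\beta\times\R^q)$, i.e.\ classicality of order $\mu-j$ in the \emph{joint} covariables $(\xi,\textup{Im}\,z,\tilde\eta)$. The term $(1-\chi(\tilde\eta/c_j))\tilde h_{j,l}$ is compactly supported in $\tilde\eta$ but retains full order $\mu-j$ in $\xi$ and $\textup{Im}\,z$ on every weight line (and is in fact no longer classical in the joint covariables), so it is \emph{not} of order $-\infty$, and consequently $\tilde h_l-\sum_{j\le N}\tilde h_{j,l}$ does not land in $C^\infty(\overline\R_+\times\Omega,M^{\mu-(N+1)}_{\mathcal O}(X;\R^q))$ modulo smoothing; the resulting operator would even fail to have a well-defined vanishing interior symbol $\sigma_0^{\mu-j}$ near $\eta=0$. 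You correctly sensed the tension (``leaving holomorphy in $z$ intact''), but the way to resolve it is not to refrain from excising in $z$: it is to give up holomorphy temporarily and restore it afterwards. Concretely, either sum the associated families $\tilde p_{j,l}(r,y,\tilde\rho,\tilde\eta)\in C^\infty(\overline\R_+\times\Omega,L^{\mu-j}_{\textup{cl}}(X;\R^{1+q}))$ by the standard asymptotic summation of parameter-dependent classical operators and then Mellin-quantize the result via Theorem \ref{21.T.melq0}, or sum the restrictions $\tilde h_{j,l}|_{\Gamma_\beta\times\R^q}$ in $L^{\mu-j}_{\textup{cl}}(X;\Gamma_\beta\times\R^q)$ and recover a holomorphic symbol by the kernel cut-off operator of Theorem \ref{11.T.cut}, which alters the result only by an element of $M^{-\infty}_{\mathcal O}(X;\R^q)$. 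With that replacement the rest of your argument, including the uniqueness step via Remark \ref{21.R.cdsm}, goes through.
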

\nt Similarly as in the standard pseudo-differential calculus $A$ is called an asymptotic sum of the $A_j,j\in \N,$ written $A\sim \sum_{j=0}^\infty A_j.$
\begin{rem}\label{22.R.cont}
An operator $A\in  L^m(M,{\bf{g}})$ for ${\bf{g}}=(\gamma ,\gamma -\mu ,\Theta ),\mu -m\in \N,$ induces continuous operators
\beq\label{22.E.contas}
A:H^{s,\gamma }(M)\rightarrow H^{s-m,\gamma -\mu }(M),\,H_P^{s,\gamma }(M)\rightarrow H_Q^{s-m,\gamma -\mu }(M)
\eeq

\nt for every $s\in \R$ and any asymptotic type $P$ associated with $(\gamma ,\Theta ),$ for some resulting $Q$ associated with $(\gamma -\mu ,\Theta ).$
\end{rem}
\begin{thm}\label{22.T.comp}
The composition of operators induces a bilinear map
\beq\label{22.T.comp1}
L^m(M,{\bf{g}})\times L^n(M,{\bf{h}})\rightarrow L^{m+n}(M,{\bf{g}}\circ {\bf{h}})
\eeq
for $\mu -m,\nu -n\in \N,$ and ${\bf{g}}=(\gamma -\nu ,\gamma -(\mu +\nu ),\Theta ),{\bf{h}}=(\gamma ,\gamma -\nu ,\Theta ),{\bf{g}}\circ {\bf{h}}=(\gamma ,\gamma -(\mu +\nu ),\Theta ),$ and we have
\beq\label{22.T.comp2}
\sigma ^{m+n}(AB)=\sigma ^m(A)\sigma ^n(B)
\eeq
with componentwise multiplication. Moreover, if one of the factors belongs to the space with subscript $M+G\,(G),$ then the same is true of the composition.
\end{thm}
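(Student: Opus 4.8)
\nt The plan is to reduce the global composition to a Leibniz calculus for local operator-valued edge amplitude functions and then to read off the symbols. First I would dispose of the harmless contributions. Writing $A$ and $B$ in the form \eqref{21.E.edgectglob}, the interior summands $\psi A_{\textup{int}}\psi '$ and $\psi 'B_{\textup{int}}\psi ''$ compose inside $L^{m+n}_{\textup{cl}}(s_0(M))$ by the standard pseudo-differential calculus, with the usual Leibniz expansion for $\sigma _0.$ By the mapping properties \eqref{21.E.glsmabb}, Remark \ref{22.R.cont} and Remark \ref{21.R.module} the space $L^{-\infty }(M,{\bf{g}})$ is a two-sided ideal, so all summands containing a smoothing factor are absorbed. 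Since the cut-off functions $\theta \prec \theta ',\,\psi \prec \psi '$ may be chosen with the nesting and support conditions of Section 2.1, the mixed edge--interior products are either smoothing or again of pure edge type, localised near $s_1(M).$ Thus everything reduces to products $\Op_y(a)\circ \Op_y(b)$ with $a\in R^m(\Omega \times \R^q,{\bf{g}}),\,b\in R^n(\Omega \times \R^q,{\bf{h}})$ over one edge chart.

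\nt The central step will be the amplitude-function composition formula
\[
\Op_y(a)\,\Op_y(b)=\Op_y(a\sharp b)\ \textup{mod}\ L^{-\infty }(X^\wedge\times \Omega ),\qquad a\sharp b\sim \sum_{\alpha }\frac{1}{\alpha !}(\partial _\eta ^\alpha a)(D_y^\alpha b),
\]
with $a\sharp b\in R^{m+n}(\Omega \times \R^q,{\bf{g}}\circ {\bf{h}}).$ The oscillatory-integral technique for operator-valued symbols acting in the scale $\Wcal^s(\R^q,\Kcal^{s,\gamma }(X^\wedge))$ with the group action $\kappa $ produces the Leibniz expansion, and the asymptotic sum is performed by Theorem \ref{22.T.as}; so the real work is to show that each term $(\partial _\eta ^\alpha a)(D_y^\alpha b)$ lies in $R^{m+n-|\alpha |}(\Omega \times \R^q,{\bf{g}}\circ {\bf{h}}).$ Decomposing $a=a_\Ocal+(m_a+g_a)$ and $b=b_\Ocal+(m_b+g_b)$ as in \eqref{21.E.sedgeamp}, I would argue that any product carrying a Green factor lies in $R_G$ --- a Green symbol takes values in operators mapping into weighted cone spaces with asymptotics, and by Definition \ref{21.D.seq4} and the twisted-homogeneity estimates this property, together with the analogous one for the formal adjoint, is preserved under composition with any edge amplitude function from either side; that products carrying a smoothing Mellin factor but no Green factor lie in $R_{M+G}$, using the $r^j$-power bookkeeping of \eqref{21.E.smels}, the fact from Theorem \ref{11.T.mmu} that a smoothing Mellin symbol composed with any Mellin symbol is again smoothing, and admissible shifts of the weights $\gamma _{j\alpha }$ off the $\pi _\C$ of the resulting Mellin asymptotic types (which cost only Green remainders); and that the principal piece $a_\Ocal\circ b_\Ocal$ is again of $a_\Ocal$-type. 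For the last point I would use the alternative representation $a_\Ocal=r^{-\mu }\epsilon \,\op_M^{\gamma -\nu -n/2}(h_a)\epsilon '$, $b_\Ocal=r^{-\nu }\epsilon \,\op_M^{\gamma -n/2}(h_b)\epsilon '$ of Remark \ref{21.R.salt} and the identity $r^{-\nu }\op_M^{\delta }(f)=\op_M^{\delta }(T^{-\nu }f)\,r^{-\nu }$ to form a Mellin Leibniz product of holomorphic Mellin symbols; holomorphy of the result is kept via Cauchy's theorem and Theorem \ref{11.T.cut}, so $a_\Ocal b_\Ocal=r^{-(\mu +\nu )}\epsilon \,\op_M^{\gamma -n/2}(h)\epsilon '$ modulo $R_G$ for an $h\in C^\infty (\overline{\R}_+\times \Omega ,M^{m+n-|\alpha |}_\Ocal(X;\R^q))$, the mismatches of the cut-offs $\epsilon '\epsilon $ and $\omega _\eta ,\chi _\eta $ contributing only terms of the type \eqref{21.E.segalgtr}, cf.\ \cite{Gil2}.

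\nt It then remains to extract the symbols and the ideal property. Reading off the leading ($\alpha =0$) term of $a\sharp b$ gives $\sigma _0(AB)=\sigma _0(A)\sigma _0(B)$ and identifies the leading edge symbol as $(a\sharp b)_{(m+n)}(y,\eta )=a_{(m)}(y,\eta )\,b_{(n)}(y,\eta )$ as operator-valued symbols on $X^\wedge$, i.e.\ $\sigma _1^{m+n}(AB)(y,\eta )=\sigma _1^m(A)(y,\eta )\,\sigma _1^n(B)(y,\eta )$ (in particular the subordinate conormal symbols multiply, again by Theorem \ref{11.T.mmu}); applying this along the filtration $L^{m-j}(M,{\bf{g}})$ defined by the vanishing of $\sigma ^{m-(j-1)}$ yields \eqref{22.T.comp2}. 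Finally, if $A\in L^m_{M+G}(M,{\bf{g}})$ then $A_{\textup{int}}\equiv 0$ and every local $a_j\in R^m_{M+G}$, so each term of $a_j\sharp b_i$ carries an $m$- or a $g$-factor and hence lies in $R^{m+n}_{M+G}$; together with the smoothing and module arguments of the first step this places $AB$ in $L^{m+n}_{M+G}(M,{\bf{g}}\circ {\bf{h}})$, and the case of the subscript $G$ is identical. The infinite weight interval $\Theta =(-\infty ,0]$ follows by intersecting over finite ones.

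\nt The hard part will be the holomorphic Mellin-symbol composition inside $a_\Ocal\circ b_\Ocal$: one must slide the weight lines of the two Mellin operators past each other, retain holomorphy of the resulting Mellin symbol across the whole strip, and show that the several cut-off mismatches produced by the non-canonical edge quantisation (Remark \ref{21.R.seq4}) collapse into Green symbols of the correct asymptotic type. This, together with the weight-line bookkeeping for the smoothing Mellin contributions, is the technically most demanding ingredient.
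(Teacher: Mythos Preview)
The paper states Theorem \ref{22.T.comp} without proof; it is recalled as a known structural result of the edge calculus, with the background references \cite{Schu2}, \cite{Egor1}, \cite{Schu20} given earlier for the algebra as a whole. So there is no in-paper argument to compare your proposal against.

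That said, your outline is the standard route taken in those references and is essentially correct: reduction via \eqref{21.E.edgectglob} to local edge amplitude functions, the ideal property of $L^{-\infty }(M,{\bf{g}})$ from the mapping properties, the Leibniz expansion $a\sharp b$ for operator-valued symbols in the $\kappa $-twisted set-up of Section 2.4, the case split $a_\Ocal,\,m,\,g$ with the Mellin quantisation of Remark \ref{21.R.salt} for the $a_\Ocal b_\Ocal$ piece, and reading off the leading term for \eqref{22.T.comp2}. Your identification of the ``hard part'' --- sliding weight lines, preserving holomorphy, and controlling the cut-off mismatches as flat Green remainders in the sense of \eqref{21.E.segalgtr} --- is exactly where the work lies in \cite{Gil2} and \cite[Section 3]{Schu20}. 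One small point you pass over quickly: for the cross terms $\vartheta _j\Op_y(a_j)\vartheta '_j\cdot \vartheta _k\Op_y(b_k)\vartheta '_k$ with $j\neq k$ you need pseudo-locality in the edge variable plus a change of edge charts to pull everything into a single $\Omega $; this is routine but should be mentioned alongside your ``mixed edge--interior'' remark.
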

\begin{thm}\label{22.T.ad}
Let $A\in L^m(M,{\bf{g}})$ for ${\bf{g}}=(\gamma ,\gamma -\mu ,\Theta ),\mu - m\in \N;$ then for the formal adjoint we have $A^*\in L^m(M,{\bf{g}}^*)$ for ${\bf{g}}^*=(-\gamma +\mu ,-\gamma  ,\Theta )$ \textup{(}defined by $(Au,v)_{H^{0,0}(M)}=(u,A^*v)_{H^{0,0}(M)}$ for all $u,v\in C_0^\infty (s_0(M))$\textup{)}, and 
\beq\label{22.T.ad12}
\sigma ^m(A^*)=\sigma ^m(A)^*
\eeq 
componentwise, i.e. $\sigma _0^m(A^*)=\overline{\sigma _0^m(A)},$ and $\sigma _1^m(A^*)=\sigma _1^m(A)^*$ where \virg{*} on the right means the $(y,\eta )$-wise formal adjoint of $\sigma _1^m(A): \Kcal^{s,\gamma }(X^\wedge)\rightarrow \Kcal^{s-m,\gamma -\mu }(X^\wedge)$ in the sense $(\sigma _1^m(A)f,g)_{\Kcal^{0,0}(X^\wedge)}=(f,\sigma _1^m(A)^*g)_{\Kcal^{0,0}(X^\wedge)}$ for $f,g\in C_0^\infty (X^\wedge).$ Moreover, if $A$ belongs to the space with subscript $M+G\,(G),$ then the same is true of the formal adjoint.
\end{thm}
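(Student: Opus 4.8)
The plan is to reduce the global statement to the local amplitude-function level, using the structure of Definition \ref{21.D.sealg}, and there to invoke the calculus of operator-valued symbols together with the Mellin machinery from Section 1. First I would fix $A\in L^m(M,{\bf{g}})$ in the form \eqref{21.E.edgectglob}, i.e. $A=\sum_{j}\theta\varphi_j\Op_y(a_j)\theta'\varphi'_j+\psi A_{\textup{int}}\psi'+C$. Since taking formal adjoints with respect to $H^{0,0}(M)$ is linear and conjugate-linear in the obvious slots, it suffices to treat each summand separately. The term $\psi A_{\textup{int}}\psi'$ lives in the interior $s_0(M)$ away from the edge, so its adjoint is handled by the standard classical $\psi$do calculus and contributes to $A^*_{\textup{int}}$ in $L^m_{\textup{cl}}(s_0(M))$; this also produces the symbol rule $\sigma_0^m(A^*)=\overline{\sigma_0^m(A)}$. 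The smoothing term $C\in L^{-\infty}(M,{\bf{g}})$ has adjoint $C^*\in L^{-\infty}(M,{\bf{g}}^*)$ directly by the defining mapping property \eqref{21.E.glsmabb}, which is symmetric under passing to $C^*$ with the roles of $P,Q$ and of the weights $\gamma,\gamma-\mu$ swapped into $-\gamma+\mu,-\gamma$ — exactly the shift $(\gamma,\gamma-\mu,\Theta)\mapsto(-\gamma+\mu,-\gamma,\Theta)$ claimed. If $C$ is flat the same computation keeps it flat.

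The substantive part is the edge-amplitude term $\theta\varphi_j\Op_y(a_j)\theta'\varphi'_j$ with $a_j\in R^\mu(\Omega\times\R^q,{\bf{g}})$, $a_j=a_\Ocal+(m+g)$ as in \eqref{21.E.sedgeamp}. I would use that for operator-valued symbols the formal adjoint of $\Op_y(a)$ is, modulo $L^{-\infty}$, $\Op_y(a^\star)$ where $a^\star(y,\eta)$ has the usual asymptotic expansion $a^\star\sim\sum_\alpha\frac{1}{\alpha!}\partial_\eta^\alpha D_y^\alpha a^*(y,\eta)$, with $a^*$ the pointwise $\Kcal^{0,0}(X^\wedge)$-adjoint of the operator family $a(y,\eta)$ on the stretched cone. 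So the whole problem comes down to: (a) the pointwise adjoint $a_\Ocal^*(y,\eta)$ is again a holomorphic-Mellin edge amplitude function with the right weight data, and (b) the classes $R^\mu_{M+G}$ and $R^\mu_G$ are closed under pointwise adjoints (with the appropriate $P,Q$ swap). For (a) I would compute the adjoint of $r^{-\mu}\epsilon\,\op_M^{\gamma-n/2}(h)(y,\eta)\,\epsilon'$: the adjoint of a weighted Mellin operator $\op_M^{\gamma-n/2}(h)$ is again a weighted Mellin operator with weight flipped to $-\gamma+\mu-n/2$ and symbol $h^\star(r,y,z,\eta)$ obtained from $h$ by complex conjugation composed with the reflection $z\mapsto n+1-\overline{z}$ in the complex plane (this is the standard reflection formula relating $\op_M^\beta(h)^*$ and $\op_M^{n-\beta}$, with the factors $\epsilon,\epsilon'$, $r^{-\mu}$ and the $[\eta]$-dilations accounted for using Cauchy's theorem exactly as in the proof of Proposition \ref{13.P.devcom}); since $M^\mu_\Ocal(X;\R^q)$ is stable under conjugation and holomorphic reflection, $h^\star$ is again in $C^\infty(\overline\R_+\times\Omega,M^\mu_\Ocal(X;\R^q))$, and $a_\Ocal^*$ is of the required form up to a term in $R^\mu_G(\Omega\times\R^q)_\Ocal$, using Remark \ref{21.R.salt}. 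For the smoothing Mellin part $m$ of \eqref{21.E.smels}, the pointwise adjoint of each $\op_M^{\gamma_{j\alpha}-n/2}(f_{j\alpha})(y)\eta^\alpha$ is $\bar\eta^\alpha\op_M^{-\gamma_{j\alpha}-n/2+?}(f_{j\alpha}^\star)$ with $f_{j\alpha}^\star\in C^\infty(\Omega,M^{-\infty}_{R'_{j\alpha}}(X))$ for a reflected Mellin asymptotic type $R'_{j\alpha}$ (Theorems \ref{11.T.mmu}, \ref{11.T.minv}-type stability of the $M^{-\infty}_R$ classes under the reflection), and the weight conditions $\gamma-j\le\gamma_{j\alpha}\le\gamma$ transform correctly into the $(-\gamma+\mu,-\gamma,\Theta)$-admissible range; a minor bookkeeping point is that powers $r^j$ and the factor $r^{-\mu}$ conjugate to the same, so the form \eqref{21.E.smels} is reproduced with the new weight data. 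Finally, for the Green part, I would just invoke Definition \ref{21.D.seq4}: the very definition of $R^\mu_G(U\times\R^q,{\bf{g}})_{P,Q}$ is symmetric, $g\in R^\mu_G(\cdot,{\bf{g}})_{P,Q}$ iff $g^*\in R^\mu_G(\cdot,{\bf{g}}^*)_{Q,P}$, so there is nothing to prove beyond matching indices.

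Assembling these, $A^*$ has the form \eqref{21.E.edgectglob} with weight data ${\bf{g}}^*=(-\gamma+\mu,-\gamma,\Theta)$, hence $A^*\in L^m(M,{\bf{g}}^*)$; the symbol identities $\sigma_0^m(A^*)=\overline{\sigma_0^m(A)}$ and $\sigma_1^m(A^*)=\sigma_1^m(A)^*$ follow by reading off the leading (twisted-homogeneous of order $m$) terms in the expansion $a^\star\sim\sum_\alpha\frac1{\alpha!}\partial_\eta^\alpha D_y^\alpha a^*$: the leading term is just $a^*$, whose principal parts are the pointwise adjoints of $\sigma_0$ and $\sigma_1$ respectively (the homogeneity \eqref{22.E.princhom} is preserved since $\kappa_\lambda$ is unitary on $\Kcal^{0,0}(X^\wedge)$ up to the standard $\lambda^{(n+1)/2}$ normalization, which cancels in the adjoint pairing). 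The $M+G$ and $G$ claims are immediate from steps (b). The main obstacle I expect is step (a): carefully tracking the reflection $z\mapsto n+1-\bar z$ of the Mellin covariable through the weight line $\Gamma_{(n+1)/2-\gamma}$, the cut-off functions $\epsilon,\epsilon'$ and $\omega_\eta,\chi_\eta$, and the $[\eta]$-rescaling, so that the adjoint genuinely lands back in the edge amplitude class with weight $-\gamma+\mu$ modulo a Green remainder — everything else is either a reference to the standard operator-valued symbol calculus or a symmetric reading of the definitions.
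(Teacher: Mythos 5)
The paper states Theorem \ref{22.T.ad} without proof; like Theorems \ref{22.T.comp} and \ref{22.T.as} it is quoted as a standard structural result of the edge calculus from the monograph literature (\cite{Schu32}, \cite{Schu2}, \cite{Schu20}), so there is no internal argument to compare yours against. Your outline is nevertheless the correct standard proof and is organised the way it is done in those references: decompose $A$ as in \eqref{21.E.edgectglob}, dispose of $\psi A_{\textup{int}}\psi'$ by the classical calculus and of $C$ by the manifest symmetry of \eqref{21.E.glsmabb} under ${\bf{g}}\mapsto{\bf{g}}^*$, and reduce the edge part to the pointwise adjoint of the amplitude function via $a^\star\sim\sum_\alpha\frac1{\alpha!}\partial_\eta^\alpha D_y^\alpha a^*$. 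You have also correctly located the only substantive computation: the Mellin reflection $z\mapsto n+1-\bar z$ combined with the weight flip $\op_M^{\gamma-n/2}\mapsto\op_M^{-\gamma-n/2}$ and the subsequent commutation of $r^{-\mu}$ (which produces the domain weight $-\gamma+\mu$) is exactly what puts $a_\Ocal^*$ back into $R^\mu(\Omega\times\R^q,{\bf{g}}^*)$ modulo Green terms, and your check that $\gamma-j\le\gamma_{j\alpha}\le\gamma$ transforms into the admissible range for ${\bf{g}}^*$ is the right bookkeeping for the smoothing Mellin summands. Two points should be made explicit in a written version, though neither is a gap in substance: (i) the asymptotic sum $\sum_\alpha\frac1{\alpha!}\partial_\eta^\alpha D_y^\alpha a^*$ must be carried out \emph{within} the class $R^\mu(\Omega\times\R^q,{\bf{g}}^*)$, which uses the symbol-level analogue of Theorem \ref{22.T.as} and the stability of the Green and smoothing Mellin classes under $\partial_\eta^\alpha D_y^\alpha$; (ii) since for $m<\mu$ the space $L^m(M,{\bf{g}})$ is defined by the vanishing of $\sigma^{\mu},\dots,\sigma^{m+1}$, the claim $A^*\in L^m(M,{\bf{g}}^*)$ needs $\sigma^k(A^*)=\sigma^k(A)^*$ for every $k$ with $m\le k\le\mu$, not just $k=m$; this follows by applying your leading-term argument at each level.
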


\subsection{Asymptotic parametrices in the edge case} 
We now turn to the construction of parametrices of elliptic operators. The new aspect here are asymptotic parametrices, i.e. with flat and Green operators as left over terms. Those give rise to a transparent description of asymptotics to elliptic operators in the edge calculus. We first outline the known general ideas of the parametrix construction. Let $A\in  L^\mu (M,{\bf{g}}),\,{\bf{g}}=(\gamma ,\gamma -\mu ,\Theta ).$ The first step is to employ $\sigma _0$-ellipticity of $A$, cf. Definition \ref{22.D.ell}. Apart from the standard ellipticity in $L^\mu _{\textup{cl}}(s_0(M))$ the operator $A$ is locally near $s_1(M)$ of the form $A=r^{-\mu }\Op_{r,y}(p)$ modulo a smoothing operator, where $p$ is as in \eqref{21.E.seloc}, \eqref{21.E.selocp}, and $\tilde{p}(r,y,\tilde{\rho },\tilde{\eta })$
takes values in parameter-dependent elliptic operators in $L^\mu _{\textup{cl}}(X;\R_{\tilde{\rho },\tilde{\eta }}^{1+q})$ up to $r=0.$ 
\begin{lem}\label{23.L.p1}
Under the above-mentioned conditions on $p$ there exists a $p^{(-1)}(r,y,\newline\rho ,\eta )=\tilde{p}^{(-1)}(r,y,r\rho ,r\eta ),\,\tilde{p}^{(-1)}(r,y,\tilde{\rho },\tilde{\eta })\in C^\infty (\overline{\R}_+,L^{-\mu}_{\textup{cl}}(X;\R_{\tilde{\rho },\tilde{\eta }}^{1+q})), $ such that
\beq\label{23.E.p2}
\begin{split}
&r^\mu p^{(-1)}(r,y,\rho ,\eta )\,\sharp _{r,y}\, r^{-\mu }p(r,y,\rho ,\eta )\sim 1,\\
&r^{-\mu }p(r,y,\rho ,\eta )\,\sharp _{r,y}\, r^\mu p^{(-1)}(r,y,\rho ,\eta )\sim 1,
\end{split}
\eeq
and equivalence means equality of corresponding asymptotic sums modulo smoothing families. 
Here, $\sharp _{r,y}$ between operator families $a(r,y,\rho ,\eta ),b(r,y,\rho ,\eta )$ means the Leibniz product in the respective variables, i.e. the asymptotic sum $$\sum_{\alpha \in \N^{1+q}}1/\alpha !\,\partial ^\alpha _{\rho ,\eta }a(r,y,\rho ,\eta )(D^\alpha _{r,y}b(r,y,\rho ,\eta )).$$ 
\end{lem}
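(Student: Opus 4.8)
The plan is to invert $p$ first pointwise in $(r,y)$ inside the parameter-dependent pseudo-differential calculus on the closed manifold $X$, and then to remove the defect created by the Leibniz product $\sharp_{r,y}$ by a formal Neumann series, controlling at each step that one stays within the class of edge-degenerate symbols, i.e.\ symbols of the form $\tilde b(r,y,r\rho,r\eta)$ with $\tilde b\in C^\infty(\overline{\R}_+\times\Omega,L^m_{\textup{cl}}(X;\R^{1+q}))$. Since $\tilde p(r,y,\tilde\rho,\tilde\eta)$ is parameter-dependent elliptic in $L^\mu_{\textup{cl}}(X;\R^{1+q}_{\tilde\rho,\tilde\eta})$, uniformly up to $r=0$, the standard parameter-dependent calculus on $X$ yields a two-sided parametrix $\tilde q_0\in C^\infty(\overline{\R}_+\times\Omega,L^{-\mu}_{\textup{cl}}(X;\R^{1+q}))$, i.e.\ $\tilde q_0\circ_X\tilde p=1-\tilde c$ and $\tilde p\circ_X\tilde q_0=1-\tilde c'$ with $\tilde c,\tilde c'\in C^\infty(\overline{\R}_+\times\Omega,L^{-\infty}(X;\R^{1+q}))$, where $\circ_X$ is composition of the operator-valued symbols in the variables on $X$ (it involves Leibniz products in the cotangent variables of $X$, but not in $r,y,\tilde\rho,\tilde\eta$). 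Put $q_0(r,y,\rho,\eta):=\tilde q_0(r,y,r\rho,r\eta)$.

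The one identity that drives the argument is that, for $b(r,y,\rho,\eta)=\tilde b(r,y,r\rho,r\eta)$, one has $\partial_\rho b=r\,(\partial_{\tilde\rho}\tilde b)(r,y,r\rho,r\eta)$, $\partial_{\eta_k} b=r\,(\partial_{\tilde\eta_k}\tilde b)(r,y,r\rho,r\eta)$, and
\[
(r\partial_r)b=\bigl[(r\partial_r+\tilde\rho\partial_{\tilde\rho}+\tilde\eta\cdot\partial_{\tilde\eta})\tilde b\bigr](r,y,r\rho,r\eta),
\]
while $r\partial_r$, $\tilde\rho\partial_{\tilde\rho}$, $\tilde\eta\cdot\partial_{\tilde\eta}$ all map $C^\infty(\overline{\R}_+\times\Omega,L^m_{\textup{cl}}(X;\R^{1+q}))$ into itself (the two latter ones without raising the order). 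Inserting this, together with the ordinary product rule for the scalar factors $r^{\pm\mu}$, into the Leibniz formula $a\sharp_{r,y}b=\sum_\alpha\frac1{\alpha!}(\partial^\alpha_{\rho,\eta}a)\circ(D^\alpha_{r,y}b)$, a direct bookkeeping shows that the $\alpha$-th summand of $(r^\mu q_0)\sharp_{r,y}(r^{-\mu}p)$ is of the form $r^{N_\alpha}\tilde d_\alpha(r,y,r\rho,r\eta)$ with $N_\alpha\ge 0$ and $\tilde d_\alpha\in C^\infty(\overline{\R}_+\times\Omega,L^{-|\alpha|}_{\textup{cl}}(X;\R^{1+q}))$: every factor $r^{-1}$ produced by a $\partial_r$ hitting $p$, or by differentiating $r^{-\mu}$, is compensated by a factor $r$ produced by the corresponding $\partial_\rho$ or $\partial_\eta$ hitting $q_0$. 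Absorbing $r^{N_\alpha}$ into $\tilde d_\alpha$ and carrying out an asymptotic summation in $C^\infty(\overline{\R}_+\times\Omega,L^0_{\textup{cl}}(X;\R^{1+q}))$ gives $(r^\mu q_0)\sharp_{r,y}(r^{-\mu}p)=1+c_1$ with $c_1=\tilde c_1(r,y,r\rho,r\eta)$, $\tilde c_1\in C^\infty(\overline{\R}_+\times\Omega,L^{-1}_{\textup{cl}}(X;\R^{1+q}))$, the term $1$ and a smoothing contribution coming from the $\alpha=0$ term $\tilde q_0\circ_X\tilde p$. The same bookkeeping shows that $\sharp_{r,y}$ is an associative composition on edge-degenerate symbols carrying prescribed scalar $r$-prefactors, lowering orders in the usual way, so the formal Neumann series $d:=\sum_{j\ge 1}(-1)^j\,(c_1\sharp_{r,y}\cdots\sharp_{r,y}c_1)$ ($j$ factors) sums asymptotically to a $\tilde d(r,y,r\rho,r\eta)$ with $\tilde d\in C^\infty(\overline{\R}_+\times\Omega,L^{-1}_{\textup{cl}}(X;\R^{1+q}))$, and $(1+d)\sharp_{r,y}(1+c_1)\sim 1$.

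I would then set $r^\mu p^{(-1)}:=(1+d)\sharp_{r,y}(r^\mu q_0)$. As $1+d$ carries the prefactor $r^0$ and $q_0$ is edge-degenerate of order $-\mu$, the bookkeeping once more yields $r^\mu p^{(-1)}=r^\mu\tilde p^{(-1)}(r,y,r\rho,r\eta)$ with $\tilde p^{(-1)}\in C^\infty(\overline{\R}_+\times\Omega,L^{-\mu}_{\textup{cl}}(X;\R^{1+q}_{\tilde\rho,\tilde\eta}))$, i.e.\ $p^{(-1)}=\tilde p^{(-1)}(r,y,r\rho,r\eta)$ is of the asserted form; and by associativity
\[
r^\mu p^{(-1)}\,\sharp_{r,y}\,r^{-\mu}p=(1+d)\,\sharp_{r,y}\,\bigl((r^\mu q_0)\,\sharp_{r,y}\,(r^{-\mu}p)\bigr)=(1+d)\,\sharp_{r,y}\,(1+c_1)\sim 1,
\]
which is the first relation of \eqref{23.E.p2}. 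Running the same construction on the other side, starting from $\tilde p\circ_X\tilde q_0=1-\tilde c'$, produces a right inverse $p^{(-1)}_R$ of the same edge-degenerate type satisfying $r^{-\mu}p\,\sharp_{r,y}\,r^\mu p^{(-1)}_R\sim 1$. Evaluating $r^\mu p^{(-1)}\,\sharp_{r,y}\,r^{-\mu}p\,\sharp_{r,y}\,r^\mu p^{(-1)}_R$ in the two admissible ways, and using once more that a $\sharp_{r,y}$-product with a smoothing factor and total prefactor $r^0$ is again smoothing of edge-degenerate type, one obtains that $p^{(-1)}-p^{(-1)}_R$ is a smoothing family; hence the single symbol $p^{(-1)}$ satisfies both relations in \eqref{23.E.p2}.

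The only real work here is the bookkeeping claim underlying all the steps: the closedness of the class of edge-degenerate symbols under the Leibniz product $\sharp_{r,y}$ combined with scalar $r$-prefactors and under asymptotic summation, and in particular the absence of negative powers of $r$ in the expansion. This is precisely the point where the edge-degenerate scaling $\tilde p(r,y,r\rho,r\eta)$ is indispensable, since it is the factors $r$ gained from $\partial_{\rho,\eta}$-derivatives that absorb the $r^{-1}$ each $\partial_r$ would otherwise create; the verification is routine but somewhat tedious, in the spirit of the edge calculus of \cite{Schu20}.
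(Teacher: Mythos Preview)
The paper states Lemma~\ref{23.L.p1} without proof; it is treated as a standard ingredient of the edge calculus and used immediately afterwards. Your argument supplies exactly the expected proof: a pointwise parameter-dependent parametrix $\tilde q_0$ on $X$, followed by a formal Neumann series correction for the Leibniz product in $(r,y)$, together with the usual left/right identification. The crucial technical point---that the class of edge-degenerate symbols $\tilde b(r,y,r\rho,r\eta)$ with $\tilde b\in C^\infty(\overline{\R}_+\times\Omega,L^m_{\textup{cl}}(X;\R^{1+q}))$ is closed under $\sharp_{r,y}$ combined with scalar powers $r^{\pm\mu}$, with no negative $r$-powers appearing---is correctly identified and correctly justified: each $\partial_\rho$ or $\partial_{\eta_k}$ acting on the left factor produces a gain of $r$, while each $\partial_r$ acting on $r^{-\mu}p$ produces only an $r^{-1}$ (your formula $(r\partial_r)b=[(r\partial_r+\tilde\rho\partial_{\tilde\rho}+\tilde\eta\cdot\partial_{\tilde\eta})\tilde b](r,y,r\rho,r\eta)$ is the right way to see that the order in $(\tilde\rho,\tilde\eta)$ is not raised). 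In the $\alpha=(\alpha_0,\alpha')$ term one thus gets a net factor $r^{|\alpha'|}\ge r^0$ times an edge-degenerate symbol of order $-|\alpha|$, as you claim. This bookkeeping, which you rightly flag as the only real work, is indeed routine in the spirit of \cite{Schu20}; your proof is complete and correct.
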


\nt Let us first assume that our elliptic operator has the form 
\begin{equation}\label{23.E.p3}
A=\sum_{j=0}^N \vartheta  _j\Op_y(a_j)\vartheta '_j+\psi A_{\textup{int}}\psi '
\end{equation}
for $\vartheta _j:=\theta  \varphi _j\prec \vartheta '_j:=\theta ' \varphi '_j,$ and without loss of generality, $\sum_{j=0}^N \vartheta  _j +\psi \equiv 1,\psi  \prec \psi ',$ and $A_{\textup{int}}=A$  modulo $L^{-\infty }(s_0(M)),$ as an operator in $L^\mu _{\textup{cl}}(s_0(M)).$ Moreover, $a_j(y,\eta )$ is given by
$ r^{-\mu }\epsilon \{\omega _\eta \op_M^{\gamma -n/2} (h_j)(y,\eta ) \omega '_\eta +\chi _\eta \Op_r(p_j)(y,\eta )\chi '_\eta\}\epsilon ',$ or, alternatively, $r^{-\mu }\epsilon \op_M^{\gamma -n/2} (h_j)(y,\eta )\epsilon ',$ i.e., compared with \eqref{21.E.edgectglob}, without smoothing Mellin and Green terms, and we assume $C=0.$ For instance, every $A\in \textup{Diff}^\mu _{\textup{deg}}(M)$ can be written in this way. Clearly $p_j(r,y,\rho ,\eta )=\tilde{p}_j(r,y,r\rho ,r\eta )$ refers to the corresponding local representation of $A$ over $X^\wedge\times \Omega ,$ and $h_j(r,y,z,\eta )=\tilde{h}_j(r,y,z,r\eta )$ is associated with $p_j$ as in Theorem \ref{21.T.melq0}. Let us now fix $j$ and drop it again. \\
\nt Applying Theorem \ref{21.T.melq0} with $p^{(-1)}(r,y,\rho ,\eta )$ of Lemma \ref{23.L.p1} we associate an $h^{(-1)}(r,\newline
y,z ,\eta )=\tilde{h}^{(-1)}(r,y,z ,r\eta ),\td{h}^{(-1)}(r,y,z,\td{\eta} )\in C^\infty (\overline{\R}_+\times \Omega ,M^{-\mu} _\Ocal(X,\R^q_{\td{\eta}})).$ This can be done for every $j,$ and we set
\begin{equation}\label{23.E.p6}
a_j^{-1}(y,\eta ):=r^\mu \epsilon \{\omega _\eta \op_M^{\gamma -\mu -n/2} (h_j^{(-1)})(y,\eta ) \omega '_\eta +\chi _\eta \Op_r(p_j^{(-1)})(y,\eta )\newline\chi '_\eta\}\epsilon ',
\end{equation}
or, alternatively, $a_j^{-1}(y,\eta ):=r^\mu \epsilon \op_M^{\gamma -n/2} (h_j^{(-1)})(y,\eta )\epsilon '.$ Moreover, let $A^{(-1)}_{\textup{int}}\in L^{-\mu }_{\textup{cl}}(s_0(M))$ be any parametrix of the elliptic operator $A_{\textup{int}}$.
\begin{lem}\label{23.L.p4}
The operator
\begin{equation}\label{23.E.p5}
A^{(-1)}:=\sum_{j=0}^N \vartheta  _j\Op_y(a_j^{-1})\vartheta '_j+\psi A^{(-1)}_{\textup{int}}\psi '
\end{equation}
has the properties $A^{(-1)}\in L^{-\mu }(M,{\bf{g}}^{-1})$ for ${\bf{g}}^{-1}=(\gamma -\mu ,\gamma ,\Theta ),$ and
\begin{equation}\label{23.E.p7}
A^{(-1)}A=1\,\,\textup{mod}\,\,L_{M+G}^0(M,{\bf{g}}_l),\,AA^{(-1)}=1\,\,\textup{mod}\,\,L_{M+G}^0(M,{\bf{g}}_r)
\end{equation}
for ${\bf{g}}_l=(\gamma ,\gamma ,\Theta ),{\bf{g}}_r=(\gamma -\mu ,\gamma -\mu ,\Theta ).$
\end{lem}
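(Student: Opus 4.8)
The plan is to establish the two assertions in turn, reducing both to the calculus rules already set up and to the symbolic identities \eqref{23.E.p2} of Lemma \ref{23.L.p1}.

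First I would verify the membership $A^{(-1)}\in L^{-\mu}(M,{\bf{g}}^{-1})$ by inspecting the local pieces in \eqref{23.E.p5}. Each $a_j^{-1}(y,\eta)$ in \eqref{23.E.p6} has exactly the structure prescribed by \eqref{21.E.sedgeamp}: the summand $\Op_r(p_j^{(-1)})(y,\eta)$ is built from $p_j^{(-1)}(r,y,\rho,\eta)=\td{p}_j^{(-1)}(r,y,r\rho,r\eta)$ with $\td{p}_j^{(-1)}\in C^\infty(\overline{\R}_+,L^{-\mu}_{\textup{cl}}(X;\R^{1+q}_{\td{\rho},\td{\eta}}))$ supplied by Lemma \ref{23.L.p1}; the holomorphic Mellin summand $\op_M^{\gamma-\mu-n/2}(h_j^{(-1)})(y,\eta)$ is obtained from Theorem \ref{21.T.melq0} applied to $p_j^{(-1)}$; and the cut-offs $\epsilon\prec\epsilon'$, $\omega_\eta\prec\omega'_\eta$, $\chi_\eta,\chi'_\eta$ are of the admissible type. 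The prefactor $r^{\mu}$ and the Mellin weight $\gamma-\mu$ mean that $a_j^{-1}(y,\eta)$ maps $\Kcal^{s,\gamma-\mu}(X^\wedge)$ to $\Kcal^{s+\mu,\gamma}(X^\wedge)$, i.e.\ $a_j^{-1}\in R^{-\mu}(\Omega\times\R^q,{\bf{g}}^{-1})$ with ${\bf{g}}^{-1}=(\gamma-\mu,\gamma,\Theta)$. Since moreover $A^{(-1)}_{\textup{int}}\in L^{-\mu}_{\textup{cl}}(s_0(M))$, the operator \eqref{23.E.p5} is literally of the form required by Definition \ref{21.D.sealg}, whence $A^{(-1)}\in L^{-\mu}(M,{\bf{g}}^{-1})$.

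Next I would treat the composition. By the composition rule (Theorem \ref{22.T.comp}, applied with the first factor of order $-\mu$) the weights chain as $\gamma\to\gamma-\mu\to\gamma$, so $A^{(-1)}A\in L^0(M,{\bf{g}}_l)$ with ${\bf{g}}_l=(\gamma,\gamma,\Theta)$; since also $1\in L^0(M,{\bf{g}}_l)$, we get $A^{(-1)}A-1\in L^0(M,{\bf{g}}_l)$. By Remark \ref{21.R.cdsm} it therefore suffices to show $A^{(-1)}A-1\in L^{-\infty}(s_0(M))$, i.e.\ that it has a $C^\infty$ kernel on $s_0(M)\times s_0(M)$; then automatically $A^{(-1)}A-1\in L^0_{M+G}(M,{\bf{g}}_l)$. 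To see the smoothness away from $s_1(M)$ I would split $M$ into the region with $\textup{dist}(\cdot,s_1(M))>0$ and a collar of the edge. On the former, $A$ and $A^{(-1)}$ coincide modulo $L^{-\infty}(s_0(M))$ with $A_{\textup{int}}$ resp.\ $A^{(-1)}_{\textup{int}}$, and since $A^{(-1)}_{\textup{int}}$ is a parametrix of the elliptic operator $A_{\textup{int}}$ one has $A^{(-1)}_{\textup{int}}A_{\textup{int}}-1\in L^{-\infty}(s_0(M))$. In the collar, in the splitting $(r,x,y)\in X^\wedge\times\Omega$, one has $A=r^{-\mu}\Op_{r,y}(p)$ and $A^{(-1)}=r^{\mu}\Op_{r,y}(p^{(-1)})$ modulo smoothing --- here Theorem \ref{21.T.melq0} and Remark \ref{21.R.seq4} are used to pass back and forth between $\op_M^{\gamma-n/2}(h)$ and $\Op_r(p)$ and between the various $\eta$-dependent cut-off configurations at the cost of Green/smoothing terms --- so that $A^{(-1)}A\equiv\Op_{r,y}\big(r^{\mu}p^{(-1)}\,\sharp_{r,y}\,r^{-\mu}p\big)\equiv 1$ modulo $L^{-\infty}$ by the first relation in \eqref{23.E.p2}. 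The bookkeeping of the localising factors $\vartheta_j\prec\vartheta'_j$, $\psi\prec\psi'$, $\epsilon\prec\epsilon'$ only produces further operators smoothing away from the edge: the $\prec$-relations collapse products of cut-offs, the cross terms with $j\neq k$ involve the local representatives of one and the same globally defined operator $A$ in overlapping edge charts and combine consistently under the transition maps, and all commutators of $\op_M$, $\Op_r$ and the cut-offs $\omega_\eta,\chi_\eta$ with functions of $r$ are smoothing in the $r$-direction, hence contribute to $L^{-\infty}(s_0(M))$.

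Finally, the right identity $AA^{(-1)}\equiv 1$ modulo $L^0_{M+G}(M,{\bf{g}}_r)$ with ${\bf{g}}_r=(\gamma-\mu,\gamma-\mu,\Theta)$ follows by the symmetric argument, now invoking the second relation in \eqref{23.E.p2}. I expect the main obstacle to be precisely the middle step: checking that the Mellin-versus-Fourier quantisation discrepancy and the commutators with the $\eta$-dependent cut-off functions $\omega_\eta,\chi_\eta$ are smoothing on all of $s_0(M)$, not merely near $r=0$ --- which is exactly the content built into Theorem \ref{21.T.melq0} and Remark \ref{21.R.seq4} --- together with the patching of the local amplitude functions $a_j$ over the edge covering and the handling of the cross terms in the double sum defining $A^{(-1)}A$.
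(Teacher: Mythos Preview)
Your proposal is correct and follows essentially the same strategy as the paper: show $A^{(-1)}A-1\in L^0(M,{\bf g}_l)\cap L^{-\infty}(s_0(M))$ and then invoke Remark~\ref{21.R.cdsm}. The paper's argument is slightly more economical in one respect: instead of splitting $s_0(M)$ into a region away from the edge and a collar and treating these separately via Lemma~\ref{23.L.p1}, it observes directly that $A^{(-1)}=A^{(-1)}_{\textup{int}}$ modulo $L^{-\infty}(s_0(M))$ as an element of $L^{-\mu}_{\textup{cl}}(s_0(M))$ (this holds because the edge quantisation of each $a_j^{-1}$ is, modulo smoothing, just $r^{\mu}\Op_{r,y}(p_j^{(-1)})$, which is a local parametrix of $A$ in the standard sense by \eqref{23.E.p2}); hence $A^{(-1)}$ is already a parametrix of $A$ over all of $s_0(M)$, and the cross-term and patching bookkeeping you worry about is absorbed into this single global statement. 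Your more explicit region-by-region treatment is a valid and perhaps more transparent way of unpacking the same fact.
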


\begin{proof}
By virtue of the definition of $A^{(-1)}_{\textup{int}}$ and the construction of the sum on the right of \eqref{23.E.p5} we have $A^{(-1)}_{\textup{int}}=A^{(-1)}$  modulo $L^{-\infty }(s_0(M)),$ as an operator in $L^{-\mu }_{\textup{cl}}(s_0(M)).$ It follows that $A^{(-1)}$ is a parametrix of $A$ over $s_0(M)$ in the standard sense. Since both $A:H^{s,\gamma }(M)\rightarrow H^{s-\mu ,\gamma -\mu }(M)$ and $A^{(-1)}:H^{s-\mu ,\gamma -\mu }(M)\rightarrow H^{s,\gamma }(M)$ belong to the edge calculus the compositions $A^{(-1)}A,AA^{(-1)}$ are well-defined, and we have $A^{(-1)}A-1\in L^0(M,{\bf{g}}_l)\cap L^{-\infty }(s_0(M)),$ and $AA^{(-1)}-1\in L^0(M,{\bf{g}}_r)\cap L^{-\infty }(s_0(M)).$ In order to complete the proof it suffices to apply Remark \ref{21.R.cdsm}.
\end{proof}
\begin{rem}\label{23.R.flat}
Observe that the smoothing Mellin plus Green remainders in the relations \eqref{23.E.p7} are flat in the sense that locally the occurring smoothing Mellin symbols belong to $C^\infty (\Omega ,M_{\mathcal{O}}^{-\infty }(X))$ and the Green symbols to $R^0_G(\Omega \times \R^q)_{\mathcal{O}},$ cf. the notation in Definition \ref{21.D.seq4}.
\end{rem}
\begin{rem}\label{23.R.pellco}
Let $A$ be elliptic with respect to $\sigma _0(A),$ cf. Definition \ref{22.D.ell}. Then $\sigma _{\textup{c}}(A)(y,z):=h(0,y,z,0)$ is a holomorphic family of elliptic operators in $L_{\textup{cl}}^{\mu }(X),$ smoothly depending on $y.$ For every $y$ there exists a countable set $D(y)\subset \C$ with $D(y)\cap \{c\leq \textup{Re}\,z\leq c'\}$ finite for every $c\leq c',$ such that 
\begin{equation}\label{23.E.bij}
\sigma _{\textup{c}}(A)(y,z):H^s(X)\rightarrow H^{s-\mu }(X)
\end{equation}
is bijective for all $z\in \C\setminus D(y)$ and all $s.$ There is a Mellin asymptotic type $R(y)$ such that $\sigma _{\textup{c}}(A)(y,z)^{-1} $ extends from $\C\setminus D(y)$ to an element of $M_{R(y)}^{-\mu }(X).$
\end{rem}
\begin{prop}\label{23.P.pell}
Let $A$ be elliptic with respect to $\sigma _0(A),$ cf. Definition \textup{\ref{22.D.ell}}, assume that the asymptotic type $R(y)$ of Remark \textup{\ref{23.R.pellco}} does not depend on $y,$ and let \eqref{23.E.bij} be bijective for all $z\in \Gamma _{(n+1)/2-\gamma }$ and all $y$ \textup{(}i.e. $\pi _\C R\cap \Gamma _{(n+1)/2-\gamma }=\emptyset$\textup{)}. Then there is an operator $B\in L_{M+G}^{-\mu }(M,{\bf{g}}^{-1})$ such that for $Q:=A^{(-1)}+B$ we have
\begin{equation}\label{23.E.p9}
QA=1\,\,\textup{mod}\,\,L_G^0(M,{\bf{g}}_l)+L^{-1}_{M+G}(M,{\bf{g}}_l),\,AQ=1\,\,\textup{mod}\,\,L_G^0(M,{\bf{g}}_r)+L^{-1}_{M+G}(M,{\bf{g}}_r)
\end{equation}
for ${\bf{g}}_l=(\gamma ,\gamma ,\Theta ),{\bf{g}}_r=(\gamma -\mu ,\gamma -\mu ,\Theta ).$
\end{prop}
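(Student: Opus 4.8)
The plan is to mimic the conical construction of Theorem \ref{12.T.par}, now carried out with the edge variable $y$ as an extra parameter. By Lemma \ref{23.L.p4} one already has $C_l:=A^{(-1)}A-1\in L^0_{M+G}(M,{\bf{g}}_l)$ and $C_r:=AA^{(-1)}-1\in L^0_{M+G}(M,{\bf{g}}_r)$, both flat in the sense of Remark \ref{23.R.flat}, so the only obstruction to $A^{(-1)}$ being a genuine parametrix is the non-vanishing of the principal edge symbols of $C_l,C_r$, and the first thing to remove is their principal conormal symbol. So I would first compute this conormal symbol. Using the Mellin translation product for conormal symbols of compositions (the edge analogue of \eqref{11.E.sycom}, cf. also Remark \ref{11.R.mtransl}), and writing $\sigma _{\textup{c}}(A)(y,z)=h(0,y,z,0)$, $\sigma _{\textup{c}}(A^{(-1)})(y,z)=h^{(-1)}(0,y,z,0)$ as in Remark \ref{23.R.pellco}, one obtains
\beq\label{23.E.pell1}
\sigma _{\textup{c}}(C_l)(y,z)=\sigma _{\textup{c}}(A^{(-1)})(y,z+\mu )\,\sigma _{\textup{c}}(A)(y,z)-1\in C^\infty (\Omega ,M^{-\infty }_{\mathcal{O}}(X)),
\eeq
so that $\sigma _{\textup{c}}(A^{(-1)})(\cdot,\cdot+\mu )$ is a right (hence, modulo smoothing, two-sided) parametrix of $\sigma _{\textup{c}}(A)$.

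Next I would produce the correcting operator $B$. By Remark \ref{23.R.pellco} together with the hypotheses ($R(y)\equiv R$ independent of $y$ and $\pi _\C R\cap \Gamma _{(n+1)/2-\gamma }=\emptyset$) the exact inverse $\sigma _{\textup{c}}(A)(y,z)^{-1}$ lies in $C^\infty (\Omega ,M^{-\mu }_R(X))$. As in the proof of Theorem \ref{12.T.par} I would set
\beq\label{23.E.pell2}
l(y,w):=\sigma _{\textup{c}}(A)(y,w-\mu )^{-1}-\sigma _{\textup{c}}(A^{(-1)})(y,w)=-\sigma _{\textup{c}}(C_l)(y,w-\mu )\,\sigma _{\textup{c}}(A)(y,w-\mu )^{-1};
\eeq
by Theorem \ref{11.T.mmu} this is a smoothing Mellin symbol $l\in C^\infty (\Omega ,M^{-\infty }_{R'}(X))$, and since $\sigma _{\textup{c}}(C_l)$ is holomorphic its poles can come only from $\sigma _{\textup{c}}(A)(\cdot,\cdot-\mu )^{-1}$, so $\pi _\C R'\subseteq \pi _\C R+\mu $ and therefore $\pi _\C R'\cap \Gamma _{(n+1)/2-(\gamma -\mu )}=\emptyset$ — exactly the weight condition making $l$ admissible as the conormal symbol of a smoothing Mellin operator with weight data ${\bf{g}}^{-1}$. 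Then I would define
\beq\label{23.E.pell3}
B:=\sum_{j=0}^N\vartheta _j\,\Op_y\big(r^\mu \omega _\eta \,\op_M^{\gamma -\mu -n/2}(l)(y)\,\omega '_\eta\big)\,\vartheta '_j\in L^{-\mu }_{M+G}(M,{\bf{g}}^{-1}),
\eeq
with $l$ understood chartwise, cf. \eqref{23.E.p3}, \eqref{21.E.smels}. Put $Q:=A^{(-1)}+B$. Using \eqref{23.E.pell2} one checks that the single symbol $l$ corrects both sides: $\sigma _{\textup{c}}(QA)(y,z)=\big(\sigma _{\textup{c}}(A^{(-1)})(y,z+\mu )+l(y,z+\mu )\big)\sigma _{\textup{c}}(A)(y,z)=1$ and $\sigma _{\textup{c}}(AQ)(y,z)=\sigma _{\textup{c}}(A)(y,z-\mu )\big(\sigma _{\textup{c}}(A^{(-1)})(y,z)+l(y,z)\big)=1$, because $\sigma _{\textup{c}}(A)^{-1}$ is a two-sided inverse. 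Moreover $B$ is smoothing on $s_0(M)$, so $Q\equiv A^{(-1)}$ modulo $L^{-\infty }(s_0(M))$, hence $QA-1\in L^0(M,{\bf{g}}_l)\cap L^{-\infty }(s_0(M))=L^0_{M+G}(M,{\bf{g}}_l)$ by Remark \ref{21.R.cdsm}, and similarly $AQ-1\in L^0_{M+G}(M,{\bf{g}}_r)$; both have vanishing $\sigma _0^0$ and vanishing principal conormal symbol.

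The final, and technically decisive, step is to deduce from $\sigma _{\textup{c}}(QA-1)=0$ and $\sigma _0^0(QA-1)=0$ that $QA-1\in L^0_G(M,{\bf{g}}_l)+L^{-1}_{M+G}(M,{\bf{g}}_l)$, and likewise for $AQ-1$. Here one must use the internal structure of the smoothing Mellin symbols \eqref{21.E.smels}: vanishing of the conormal symbol removes the $j=0$ summand, so every surviving summand of the smoothing Mellin part carries a factor $r^j$ with $j\geq 1$; by the form \eqref{22.E.edgesy} of the edge symbol those with $|\alpha |<j$ are of order $\leq -1$, while those with $|\alpha |=j\geq 1$ — which carry a genuine positive power of $r$ balanced by $\eta ^\alpha$ — are Green symbols, possibly after an admissible change of the weights $\gamma _{j\alpha }$ that only produces further Green contributions. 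Together with the Green part already present in $QA-1$ (resp. $AQ-1$) this yields \eqref{23.E.p9}. I expect this last reduction — that a smoothing Mellin edge symbol with vanishing conormal symbol splits into a Green symbol plus a symbol of order $\leq -1$ — to be the main obstacle; everything preceding it is a parameter-dependent transcription of the conical argument. If $A$ is not already of the form \eqref{23.E.p3}, one first absorbs its smoothing Mellin plus Green part into $C_l,C_r$; this only enlarges the asymptotic type $R'$ of $l$ by points coming from the Mellin part of $A$, which by the weight conventions of \eqref{21.E.smels} still lie off $\Gamma _{(n+1)/2-(\gamma -\mu )}$, so the construction goes through unchanged.
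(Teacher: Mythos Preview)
Your argument is correct and follows essentially the same route as the paper. The paper also constructs the correcting Mellin symbol as $g(y,z+\mu)=\sigma_{\textup{c}}(A)(y,z)^{-1}-h^{(-1)}(0,y,z+\mu,0)$, which is exactly your $l(y,z+\mu)$; it just arrives there via the detour $g_\mu=h_\mu^{(-1)}(1+m_\mu)^{-1}-h_\mu^{(-1)}$ using Theorems \ref{11.T.mmu} and \ref{11.T.minv}, whereas you write down the difference directly and invoke Remark \ref{23.R.pellco}. Your check that $\pi_\C R'\subseteq \pi_\C R+\mu$ misses $\Gamma_{(n+1)/2-(\gamma-\mu)}$ is the right admissibility verification. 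For the final step the paper simply asserts that $\sigma_{\textup{c}}(QA)=1$ gives \eqref{23.E.p9}; your more explicit reading of the Mellin part via \eqref{21.E.smels} (removal of the $j=0$ summand, the $|\alpha|<j$ terms dropping to order $\leq -1$, and the $|\alpha|=j\geq 1$ terms being absorbed into the Green part) is the standard mechanism behind that assertion and is what is actually used in the subsequent Corollary \ref{23.C.pit}.
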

\begin{proof}
The bijectivity of $\sigma _1(A)(y,\eta )$ as a family of operators \eqref{22.E.princ} is an ellipticity condition of the respective cone operators over $X^\wedge$ at $r=0$ which contains the bijectivity of the principal conormal symbol
\begin{equation}\label{23.E.conp9}
\sigma _{\textup{c}}(A)(y,z)=\tilde{h}(0,y,z,0)=h(0,y,z,0):H^s(X)\rightarrow H^{s-\mu }(X)
\end{equation}
for all $z\in \Gamma _{(n+1)/2-\gamma }$ and all $y.$ From Theorem \ref{22.T.comp} and Lemma \ref{23.L.p4} we conclude
$\sigma _1(A^{(-1)}A)(y,\eta )=\sigma _1(A^{(-1)})(y,\eta )\sigma _1(A)(y,\eta )$ and $T^\mu (\sigma _{\textup{c}}(A^{(-1)})(y,z))\sigma _{\textup{c}}(A)(y,z)\newline=1+l(y,z)$ for some $l(y,z)\in C^\infty (\Omega ,M_{\mathcal{O}}^{-\infty }(X)).$ In the notation of \eqref{23.E.p6} (for omitted $j$) we have $\sigma _{\textup{c}}(A^{(-1)})(y,z )=h^{(-1)}(0,y,z,0).$ This gives us the relation
\begin{equation}\label{23.E.conp10}
h^{(-1)}(0,y,z+\mu ,0)h(0,y,z,0)=1+l(y,z)\,\,\, \mbox{for all}\,\,\,z\in \C.
\end{equation}
For analogous reasons we have
\begin{equation}\label{23.E.conp11}
h(0,y,z-\mu ,0)h^{(-1)}(0,y,z,0)=1+m(y,z)\,\,\, \mbox{for all}\,\,\,z\in \C
\end{equation}
for some $m(y,z)\in C^\infty (\Omega ,M_{\mathcal{O}}^{-\infty }(X)).$
For the moment we now fix $y$  
and show that there is a $g(y,z)\in M_R^{-\infty }(X)$ for some Mellin asymptotic type $R$ such that
\begin{equation}\label{23.E.conp13}
(h^{(-1)}(0,y,z+\mu ,0)+g(y,z+\mu))h(0,y,z,0)=1\,\,\, \mbox{for all}\,\,\,z\in \C.
\end{equation}
To this end we write for abbreviation $h_\mu ^{(-1)}:=h^{(-1)}(0,y,z+\mu ,0),g_\mu :=g(y,z+\mu),h:=h(0,y,z,0).$ Then the ansatz $(h_\mu ^{(-1)}+g_\mu)h=1$ to determine $g_\mu$ yields $(h_\mu ^{(-1)}+g_\mu)hh_\mu ^{(-1)}=h_\mu ^{(-1)}.$
Using \eqref{23.E.conp11} it follows that $(h_\mu ^{(-1)}+g_\mu)(1+m_\mu )=h_\mu ^{(-1)}$ for $m_\mu:=m(y,z+\mu ).$ Thus, by virtue of Theorems \ref{11.T.mmu} and \ref{11.T.minv}, $g_\mu:=h_\mu ^{(-1)}(1+m_\mu )^{-1}-h_\mu ^{(-1)}$ is as desired. In other words we have $$\sigma _{\textup{c}}(A)(y,z)^{-1} =h^{(-1)}(0,y,z+\mu ,0)+g(y,z+\mu),$$
and by assumption we can choose the decomposition on the right of the latter equation in such a way that $g(y,z)\in C^\infty (\Omega ,M_R^{-\infty }(X))$ for some $y$-independent Mellin asymptotic type $R.$ To complete the proof it 
suffices to set
$$B=r^\mu \Op_y\omega _\eta \op_M^{\gamma -\mu -n/2}(g)(y)\omega '_\eta .$$
In fact, the first relation of \eqref{23.E.p9} holds when $\sigma _{\textup{c}}(QA)=T^\mu (\sigma _{\textup{c}}(Q))\sigma _{\textup{c}}(A)=1.$ However, this is true by construction, since $\sigma _{\textup{c}}(Q)=h^{(-1)}(0,y,z ,0)+g(y,z)$ and $\sigma _{\textup{c}}(A)=h(0,y,z,0),$ cf. the relation \eqref{23.E.conp13}. The second relation of \eqref{23.E.p9} can be verified in an analogous manner.
\end{proof}

\begin{cor}\label{23.C.pit}
Let $A\in L^\mu (M,{\bf{g}}),{\bf{g}}=(\gamma ,\gamma -\mu ,\Theta ),$ be as in Proposition \textup{\ref{23.P.pell}}. Then the operator $Q\in L^{-\mu} (M,{\bf{g}}^{-1}),{\bf{g}}^{-1}=(\gamma -\mu ,\gamma  ,\Theta ),$ can be found in such a way that
\begin{equation}\label{23.E.p9Q}
QA=1\,\,\textup{mod}\,\,L_G^0(M,{\bf{g}}_l),\,AQ=1\,\,\textup{mod}\,\,L_G^0(M,{\bf{g}}_r).
\end{equation}
\end{cor}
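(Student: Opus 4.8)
The plan is to upgrade the parametrix $Q=A^{(-1)}+B$ produced in Proposition \ref{23.P.pell} by a Neumann-type correction that absorbs the lower-order smoothing Mellin plus Green remainders into genuine Green operators, following the same pattern as the proof of Theorem \ref{12.T.par} in the cone case. Write $Q_0:=A^{(-1)}+B$; by Proposition \ref{23.P.pell} we have $Q_0A=1+G+R$ with $G\in L_G^0(M,\mathbf{g}_l)$ and $R\in L^{-1}_{M+G}(M,\mathbf{g}_l)$, where $\mathbf{g}_l=(\gamma,\gamma,\Theta)$. The point is that $R$ has vanishing principal conormal symbol and negative order, so its powers become increasingly flat and are eventually Green.

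Concretely, assume first that $\Theta=(-(k+1),0]$ is finite. By Theorem \ref{22.T.comp} one has $(-R)^j\in L^{-j}_{M+G}(M,\mathbf{g}_l)$ for every $j$; since the smoothing Mellin part of an operator associated with this weight interval only carries the powers $r^0,\dots,r^k$, and an $L^{-j}_{M+G}$-element has its first $j$ conormal symbols (hence its lowest $r$-powers) absent, the composition $(-R)^{k+1}$ has no Mellin contribution left and therefore lies in $L_G^0(M,\mathbf{g}_l)$. Hence $S:=\sum_{j=0}^{k}(-R)^j\in 1+L^{-1}_{M+G}(M,\mathbf{g}_l)$ satisfies
\begin{equation*}
S(1+R)=1+G',\qquad G':=(-R)^{k+1}\in L_G^0(M,\mathbf{g}_l).
\end{equation*}
For $\Theta=(-\infty,0]$ one either intersects over $k$, or equivalently forms the asymptotic sum $S\sim\sum_{j\geq 0}(-R)^j\in L^0_{M+G}(M,\mathbf{g}_l)$ by Theorem \ref{22.T.as}; to invoke the latter one has to note that the Mellin asymptotic types arising in the compositions are controlled by the poles of $\sigma_{\textup{c}}(A)^{-1}$ and can thus be chosen independent of $j$.

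Now set $Q:=SQ_0$. By Theorem \ref{22.T.comp} this again belongs to $L^{-\mu}(M,\mathbf{g}^{-1})$ (indeed $Q=Q_0$ modulo $L^{-\mu}_{M+G}(M,\mathbf{g}^{-1})$), and
\begin{equation*}
QA=S(1+G+R)=S(1+R)+SG=1+G'+SG.
\end{equation*}
Since $L_G$ is a two-sided ideal (last assertion of Theorem \ref{22.T.comp}), $SG\in L_G^0(M,\mathbf{g}_l)$, whence $QA=1\ \textup{mod}\ L_G^0(M,\mathbf{g}_l)$, which is the first relation in \eqref{23.E.p9Q}. Running the mirror-image construction on the right-hand relation of \eqref{23.E.p9} yields a $\widetilde Q\in L^{-\mu}(M,\mathbf{g}^{-1})$ with $A\widetilde Q=1\ \textup{mod}\ L_G^0(M,\mathbf{g}_r)$. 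The standard algebraic comparison then shows $Q$ and $\widetilde Q$ agree up to a Green operator: from $Q=Q(A\widetilde Q)-Q(A\widetilde Q-1)=(QA)\widetilde Q-Q(A\widetilde Q-1)$ and $QA-1,A\widetilde Q-1\in L_G$ one gets $Q-\widetilde Q\in L_G^{-\mu}(M,\mathbf{g}^{-1})$, and therefore $AQ=A\widetilde Q+A(Q-\widetilde Q)=1\ \textup{mod}\ L_G^0(M,\mathbf{g}_r)$, using once more the ideal property. Thus the single operator $Q$ realises both relations of \eqref{23.E.p9Q}.

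The only genuinely delicate point is the flatness bookkeeping in the second step — that powers of $R\in L^{-1}_{M+G}$ eventually fall into $L_G$, together with the uniformity of asymptotic types needed for Theorem \ref{22.T.as} when $\Theta$ is infinite. Everything else is formal manipulation inside the edge algebra by means of Theorem \ref{22.T.comp}.
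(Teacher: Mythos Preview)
Your argument is correct and follows essentially the same route as the paper: form a finite Neumann sum $\sum_{j=0}^{k}(-1)^j(\text{remainder})^j$ on the left, use that the $(k{+}1)$-st power of an $L^{-1}_{M+G}$-remainder lands in $L_G^0$ for $\Theta=(-(k{+}1),0]$, and then invoke the standard left/right parametrix comparison. The only cosmetic difference is that the paper takes powers of the full remainder $M+G\in L_G^0+L^{-1}_{M+G}$ at once, while you split off the Green part $G$ first and take powers of $R$ alone; since $L_G$ is a two-sided ideal these are equivalent. One small caution: your justification that $R^{k+1}\in L_G^0$ speaks of ``first $j$ conormal symbols'' being absent, but the filtration $L^{-j}_{M+G}$ in the edge calculus is by vanishing of the twisted-homogeneous \emph{edge} symbols $\sigma_1^{-i}$, $i<j$, not the conormal symbols directly --- the conclusion is still right, but the wording blurs the cone and edge filtrations.
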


\nt In fact, if we denote by $M+G\in L_G^0(M,{\bf{g}}_l)+L^{-1}_{M+G}(M,{\bf{g}}_l)$ the remainder in the first relation of \eqref{23.E.p9}, then we have $(\sum_{j=0}^k(-1)^j(M+G)^j)(1+M+G)=1 \,\,\textup{mod}\,\,L_G^0(M,{\bf{g}}_l);$ here $\Theta =(-(k+1),0].$ Thus, if first $Q$ is as in \eqref{23.E.p9}, the composition $Q_l:=(\sum_{j=0}^k(-1)^j(M+G)^j)Q$ satisfies the relation $Q_lA=1\textup{mod}\,\,L_G^0(M,{\bf{g}}_l).$ In a similar manner we find a $Q_r$ such that $AQ_r=1\,\,\textup{mod}\,\,L_G^0(M,{\bf{g}}_r).$ A standard algebraic argument shows that $Q_l$ or, alternatively, $Q_r$ may be taken as the new operator $Q$ to satisfy the relations \eqref{23.E.p9Q}.
\begin{rem}\label{23.R.pellcut}
Observe that $g(y,z)\in C^\infty (\Omega ,M_R^{-\infty }(X))$ can be directly deduced from $h^{-1}(0,y,z,0)$ which is by assumption an element of $C^\infty (\Omega ,M_R^{-\mu  }(X)),$ cf. Remark \ref{11.R.mdec}.
\end{rem}
\nt  We did not employ so far the condition \textup{(ii)} of Definition \ref{22.D.ell}. This will be the next point.
Observe that the ellipticity of $A$ which includes the bijectivity of \eqref{22.E.princ} for all $(y,\eta ),\eta \neq 0,$ entails the bijectivity of \eqref{23.E.bij} for all $z\in \Gamma _{(n+1)/2-\gamma }$ and all $y.$ In fact, the  bijectivity of
$$\sigma _1(A)(y,\eta ):\Kcal^{s,\gamma }(X^\wedge)\rightarrow \Kcal^{s-\mu ,\gamma -\mu }(X^\wedge)$$ 
means that $\sigma _1(A)(y,\eta )$ as a Fredholm operator between the Kegel-spaces is ell

iptic in the cone algebra over $X^\wedge$ for every fixed $y$ and $\eta \neq 0,$ and the respective bijectivity of $\sigma _{\textup{c}}(A)(y,z)$ belongs to these ellipticity conditions. 
\begin{thm}\label{23.T.param}
Let $A\in L^\mu (M,{\bf{g}})$ for ${\bf{g}}=(\gamma ,\gamma -\mu ,\Theta )$ be elliptic in the sense of Definition \textup{\ref{22.D.ell}}, and assume that the asymptotic type $R(y)$ of Remark \textup{\ref{23.R.pellco}} does not depend on $y$. Then there exists a parametrix $P\in L^{-\mu }(M,{\bf{g}}^{-1})$ for ${\bf{g}}^{-1}=(\gamma -\mu ,\gamma ,\Theta ),$ i.e. $PA=1\,\,\textup{mod}\,\,L^{-\infty }(M,{\bf{g}}_l),AP=1\,\,\textup{mod}\,\,L^{-\infty }(M,{\bf{g}}_r)$ for ${\bf{g}}_l,{\bf{g}}_r$ as in Proposition \textup{\ref{23.P.pell}}.
\end{thm}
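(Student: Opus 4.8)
The plan is to upgrade the left-over terms in Corollary \ref{23.C.pit} from Green operators to genuinely smoothing operators of the edge calculus by a standard asymptotic-summation (Neumann-series / formal parametrix) argument, using Theorem \ref{22.T.as} to carry it out. First I would invoke Corollary \ref{23.C.pit}: since $A$ is elliptic and the asymptotic type $R(y)$ is $y$-independent, there is a $Q\in L^{-\mu}(M,{\bf{g}}^{-1})$ with $QA=1-G_l$ and $AQ=1-G_r$ for $G_l\in L_G^0(M,{\bf{g}}_l)$ and $G_r\in L_G^0(M,{\bf{g}}_r)$. The key structural point, coming from \eqref{22.T.comp2} together with the definition of the spaces $L^{m}(M,{\bf{g}})$ by vanishing of the full symbol $\sigma^{m}$, is that $G_l$ actually has vanishing interior and edge symbols $\sigma^0$; hence $G_l\in L^{-1}_G(M,{\bf{g}}_l)$, and more generally powers $G_l^{\,k}\in L^{-k}_G(M,{\bf{g}}_l)$ by Theorem \ref{22.T.comp}, so the orders go to $-\infty$.

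Next I would form the asymptotic sum $D_l\sim\sum_{k=1}^{\infty}G_l^{\,k}$, which exists as an element of $L^{-1}_G(M,{\bf{g}}_l)$ by Theorem \ref{22.T.as} — here one uses that $L_G$ is closed under composition (Theorem \ref{22.T.comp}) and that the Green asymptotic types stabilise, which holds because $G_l$ is fixed so all powers share a common resulting pair of asymptotic types, meeting the hypothesis of Theorem \ref{22.T.as}. Then $(1+D_l)(1-G_l)=1-R_l$ where $R_l=G_l-\sum_{k\ge1}G_l^k+\sum_{k\ge1}G_l^{k+1}$ lies in $\bigcap_j L^{-j}_G(M,{\bf{g}}_l)\subset L^{-\infty}(M,{\bf{g}}_l)$, the last inclusion by Remark \ref{21.R.cdsm} (an element of $L_G$ of order $-\infty$ is a smoothing operator of the edge calculus). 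Setting $P_l:=(1+D_l)Q\in L^{-\mu}(M,{\bf{g}}^{-1})$ we get $P_lA=1\ \textup{mod}\ L^{-\infty}(M,{\bf{g}}_l)$. Symmetrically, using $AQ=1-G_r$ and the same argument from the right, we construct $P_r\in L^{-\mu}(M,{\bf{g}}^{-1})$ with $AP_r=1\ \textup{mod}\ L^{-\infty}(M,{\bf{g}}_r)$. A standard algebraic manipulation — $P_l=P_l(AP_r)+P_l\cdot(\text{smoothing})=(P_lA)P_r+\ldots=P_r\ \textup{mod}\ L^{-\infty}$, which is legitimate because composing with a smoothing edge operator stays in $L^{-\infty}$ by Theorem \ref{22.T.comp} and the mapping properties \eqref{21.E.glsmabb} — shows $P_l$ and $P_r$ agree modulo smoothing, so either may be taken as the two-sided parametrix $P$.

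The main obstacle I anticipate is not the Neumann series itself but verifying cleanly that the remainders $G_l,G_r$ really drop in order — i.e. that $\sigma^0(G_l)=0$ — and that the asymptotic-type bookkeeping for $\sum G_l^k$ genuinely satisfies the stabilisation hypothesis of Theorem \ref{22.T.as}; this is where one must be a little careful, since \emph{a priori} Green remainders could carry worsening asymptotic types under iterated composition. The resolution is that $G_l$ is a single fixed operator, so Theorem \ref{22.T.comp} already produces a fixed pair of resulting asymptotic types for all compositions $G_l\cdot G_l\cdots G_l$, and hence Theorem \ref{22.T.as} applies verbatim. Everything else — existence of $Q$, closedness of $L_G$ under composition, and the identification of order-$(-\infty)$ Green operators with smoothing operators — is supplied directly by Corollary \ref{23.C.pit}, Theorem \ref{22.T.comp}, and Remark \ref{21.R.cdsm}.
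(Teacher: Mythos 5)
There is a genuine gap at the central step. You claim that the Green remainder $G_l:=1-QA\in L_G^0(M,{\bf{g}}_l)$ from Corollary \ref{23.C.pit} has vanishing principal symbol $\sigma ^0(G_l)$ and therefore lies in $L^{-1}_G(M,{\bf{g}}_l)$, so that the powers $G_l^{\,k}$ drop in order and can be asymptotically summed via Theorem \ref{22.T.as}. This is false in general. While $\sigma _0(G_l)=0$ (Green operators are smoothing in the interior), the principal \emph{edge} symbol is $\sigma _1(G_l)=1-\sigma _1(Q)(y,\eta )\sigma _1(A)(y,\eta ),$ and the whole content of Proposition \ref{23.P.pell} and Corollary \ref{23.C.pit} is only that this difference is a Green symbol of order $0$ --- a compact, but in general nonzero, operator family on $\Kcal^{s,\gamma }(X^\wedge)$. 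Indeed the paper's proof states explicitly that $\sigma _1(Q)\sigma _1(A)=1+\sigma _1(G)$ for some $G\in L_G^0$, and uses only that this makes $\sigma _1(Q)$ a Fredholm family of index $0$. Consequently $G_l^{\,k}\in L_G^0$ for every $k$: the orders do not decrease, the hypothesis of Theorem \ref{22.T.as} is not met, and your Neumann series does not exist as an asymptotic sum. A symptom of the problem is that your argument never invokes condition (ii) of Definition \ref{22.D.ell} (bijectivity of the edge symbol); without that hypothesis $A$ need not admit a parametrix at all, so no proof can avoid it.

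The missing idea is precisely the inversion of the principal edge symbol, which is where the paper does its real work: using the index-$0$ property of $\sigma _1(Q)(y,\eta )$ one fills it up locally to a bijective $2\times 2$ block matrix family \eqref{23.E.bi}, perturbs the lower right corner to make it invertible, and extracts a Green correction $g_{11}$ plus a further Green factor $1+g_1$ so that $\sigma _1(A)^{-1}(y,\eta )=(1+g_1)(\sigma _1(Q)+g_{11})(y,\eta )=\sigma _1(Q)(y,\eta )+\sigma _1(G_1)(y,\eta )$ with $G_1\in L_G^{-\mu }(M,{\bf{g}}^{-1}).$ Only after quantising the pair $(\sigma _0(A)^{-1},\sigma _1(A)^{-1})$ to an operator $P_0$ does one obtain a remainder $C=P_0A-1$ with $\sigma ^0(C)=0$, i.e. $C\in L^{-1}_{M+G}(M,{\bf{g}}_l)$, and only then does the formal Neumann series argument via Theorem \ref{22.T.as} go through as you describe. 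Your final step (reconciling left and right parametrices) and the order-improving Neumann series are fine in themselves, but they must be applied to this corrected $P_0$, not to the $Q$ of Corollary \ref{23.C.pit}. Separately, the paper also treats the case where $A$ carries a nontrivial smoothing Mellin plus Green summand, which your argument does not address.
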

\begin{proof}
Let us first assume that $A$ is of the special form \eqref{23.E.p3} , and let $Q$ be as in Corollary \ref{23.C.pit}. The edge symbol $\sigma _1(Q)(y,\eta ):\Kcal^{s-\mu ,\gamma -\mu }(X^\wedge)\rightarrow \Kcal^{s,\gamma }(X^\wedge)$ is of index $0.$ In fact, from \eqref{23.E.p9Q} we know that $\sigma _1(Q)(y,\eta )\sigma _1(A)(y,\eta )$ as well as $\sigma _1(A)(y,\eta )\sigma _1(Q)(y,\eta )$ have the form $1+\sigma _1(G)(y,\eta )$ for some $G\in L_G^0(M,{\bf{g}}_l)$ (or $L_G^0(M,{\bf{g}}_r)).$ Since Green symbols are compact in the Kegel-spaces, it follows that $\sigma _1(Q)(y,\eta )$ takes values in Fredholm operators of index $0.$ We will construct $P$ by means of the inverse $\sigma _1(P)(y,\eta ):=\sigma _1(A)(y,\eta )^{-1}$ which will be found in the form $\sigma _1(P)(y,\eta )=\sigma _1(Q)(y,\eta )+\sigma _1(G_1)(y,\eta )$ for a suitable $G_1\in L_G^{-\mu }(M,{\bf{g}}^{-1}).$ This is all what we need, since we already have $\sigma _0(P):=\sigma _0(Q)=\sigma _0(A)^{-1}.$ A final step then allows us to construct $P$ by a formal Neumann series argument, here in the following version. From $\sigma (P)=(\sigma _0(P),\sigma _1(P))$ we pass to an operator $P_0\in L^{-\mu  }(M,{\bf{g}}_l)$ such that $\sigma (P_0)=\sigma (P).$ Then $P_0A=1+C$  for a $C\in L^{-1}(M,{\bf{g}}_l)\cap L^{-\infty }(s_0(M))=L^{-1}_{M+G}(M,{\bf{g}}_l)$ since $\sigma (P_0A-1)=\sigma (P_0)\sigma (A)-1=0,$ cf. the notation \eqref{22.E.low1}. The asymptotic sum $\sum_{j=0}^\infty (-1)^jC^j$ exists in the form $1+D,D\in L^{-1  }(M,{\bf{g}}_l)$ (the assumption of Theorem \ref{22.T.as} on the sequence $(-1)^jC^j,j\in \N,$ is satisfied). A similar construction is possible with compositions in the opposite order. Thus $P:=(1+D)P_0$ is as desired. So it remains to construct the above-mentioned $\sigma _1(G_1)(y,\eta ).$ First we carry out the construction for all $\{(y,\eta)\in \overline{U}(y_0)\times S^{q-1}:|\eta -\eta _0|<\varepsilon \}$ for a sufficiently small neighbourhood $U(y_0)$ of a point $y_0$ and sufficiently small $\varepsilon >0.$ Since
\beq\label{23.E.ind}
\sigma _1(Q)(y,\eta ):\Kcal^{s-\mu ,\gamma -\mu }(X^\wedge)\rightarrow \Kcal^{s ,\gamma  }(X^\wedge)
\eeq
is of index $0$ we can fill up \eqref{23.E.ind} to a $2\times 2$ block matrix family of bijective operators
\beq\label{23.E.bi}
f(y,\eta )=(f_{ij}(y,\eta ))_{i,j=1,2}:\Kcal^{s-\mu ,\gamma -\mu }(X^\wedge)\oplus \C^N\rightarrow \Kcal^{s ,\gamma  }(X^\wedge)\oplus \C^N
\eeq
for a suitable $N$ and $f_{11}:=\sigma _1(Q),$ smooth in $(y,\eta )\in \{ U(y_0)\times S^{q-1}:|\eta -\eta _0|<\varepsilon \}$ where $f_{12}$ takes values in $\mathcal{L}(\C^N,C_0^\infty (X^\wedge))$ and $f_{21}=(f_{21,1},\dots,f_{21,N})$ is a vector of maps $f_{21,k}:\Kcal^{s-\mu ,\gamma -\mu }(X^\wedge)\rightarrow \C$ defined by $f_{21,k}u:=\int_{X^\wedge}f_{21,k}(r,x)u(r,x)drdx$ for certain $(y,\eta )$-dependent functions $f_{21,k}(r,x)\in C_0^\infty (X^\wedge),\,k=1,\dots,N.$ The right lower corner $f_{22}$ is simply a smooth function in $(y,\eta )$ with values in $N\times N$
matrices. Generalities on constructions of that kind may be found in \cite[Section 3.3.4]{Haru13}. Here we use, in particular, the compactness of the closure of $V_1:=\{(y,\eta)\in U(y_0)\times S^{q-1}:|\eta -\eta _0|<\varepsilon \}.$ Since the invertible $N\times N$ matrices form an open dense subset of $\C^{N^2}$ a small perturbation of $f_{22}(y_0,\eta _0)$ allows us to pass to another $f(y,\eta )$ for which $f_{22}(y_0,\eta _0)$ is invertible. In other words, in the choice of \eqref{23.E.bi} we may assume the invertibility of $f_{22}(y_0,\eta _0).$ Then $f_{22}(y,\eta )$ is automatically invertible for all $(y,\eta )$ in a neighbourhood of $(y_0,\eta _0).$ Thus by choosing the diameter of $V_1$ sufficiently small we have the invertibility of the right lower corner for all $(y,\eta )$ in that set. This allows us to pass to a modified upper left corner, namely, 
\beq\label{23.E.bmod}
\tilde{f}_{11}(y,\eta ):=f_{11}(y,\eta )+g_{11}(y,\eta ):\Kcal^{s-\mu ,\gamma -\mu }(X^\wedge)\rightarrow \Kcal^{s ,\gamma  }(X^\wedge)
\eeq
for $g_{11}(y,\eta):=-(f_{12}f_{22}^{-1}f_{21})(y,\eta )$ which is invertible in a small neighbourhood of $(y_0,\eta _0).$ Now let us extend $g_{11}(y,\eta)$ by homogeneity to the set $V:=\{(y,\eta )\in U(y_0)\times (\R^q\setminus \{0\}):(y,\eta /|\eta |)\in V_1\}$ by setting
\beq\label{23.E.ext}
g_{11}(y,\eta):=|\eta |^{-\mu }\kappa _{|\eta |}g_{11}(y,\eta /|\eta |)\kappa ^{-1}_{|\eta |}
\eeq
By construction $g_{11}(y,\eta)$ is the homogeneous principal part of order $-\mu $ of a flat Green symbol in the conical set $V.$ Let us now form the composition 
\beq\label{23.E.eco}
(\sigma _1(Q)+g_{11})(y,\eta)\sigma _1(A)(y,\eta )=(1+\sigma _1(G)+f_0)(y,\eta ):\Kcal^{s ,\gamma  }(X^\wedge)\rightarrow \Kcal^{s ,\gamma  }(X^\wedge)
\eeq
for $f_0:=g_{11}\sigma _1(A),(y,\eta)\in V$ (recall that $\sigma _1(Q)=f_{11}$). The operators \eqref{23.E.eco} are invertible for all $(y,\eta)\in V,$ and $\eqref{23.E.eco}$ has the form $1+g_0(y,\eta)$ for the homogeneous principal part $g_0(y,\eta)=(\sigma _1(G)+f_0)(y,\eta )$ of order $0$ of a corresponding Green symbol. Since $f_0$ maps to flat functions, the asymptotics of the functions in the image of $g_0$ and $\sigma _1(G)$ coincide. We have $(1+g_0(y,\eta))^{-1}=1+g_1(y,\eta)$ for the homogeneous principal part $g_1$ of order $0$ of another Green symbol with the same asymptotics in the image as for $\sigma _1(G)$. In fact, $(1+g_0)(1+g_1)=1$ gives us $g_1=-g_0(1+g_1).$ We obtain
\beq\label{23.E.ecinv}
(1+g_1(y,\eta))(\sigma _1(Q)+g_{11})(y,\eta)\sigma _1(A)(y,\eta )=1,
\eeq
i.e. $(1+g_1(y,\eta))(\sigma _1(Q)+g_{11})(y,\eta)$ is a left inverse of $\sigma _1(A)(y,\eta )$ in the set $V.$ In a similar manner we can construct a right inverse, and hence
\beq\label{23.E.ecinv1}
\sigma _1(A)^{-1}(y,\eta )=(1+g_1(y,\eta))(\sigma _1(Q)+g_{11})(y,\eta).
\eeq
This is true first in $V;$ however, the above-mentioned point $(y_0,\eta _0)$ is arbitrary. Therefore, we computed the inverse of $\sigma _1(A)(y,\eta )$ everywhere. All compositions are controlled within the algebra of edge symbols. In our case we have $\sigma _1(G_1)(y,\eta ):=g_1(y,\eta)\sigma _1(Q)(y,\eta)+g_1(y,\eta)g_{11}(y,\eta)+g_{11}(y,\eta)$ which is of Green type.\\
\nt Let us now assume that $A\in L^\mu (M,{\bf{g}})$ also contains a non-trivial Mellin plus Green summand, i.e. $A$ can be written as
\beq\label{23.E.ensm}
A=A_0+M+G+C
\eeq
where $A_0$ is of the form \eqref{23.E.p3}. Clearly we may assume $C=0.$ We first show an analogue of Proposition \ref{23.P.pell}; then the rest of the proof is exactly as before. In the present case instead of \eqref{23.E.conp9} we have
\beq\label{23.E.ecocno}
\sigma _{\textup{c}}(A)(y,z)=h(0,y,z,0)+\sigma _{\textup{c}}(M)(y,z)
\eeq
where $\sigma _{\textup{c}}(M)(y,z)=f_{00}(y,z)$ when $M$ is given in the form \eqref{21.E.smels}. It suffices to show the existence of a $g(y,z)\in C^\infty (\Omega ,M_R^{-\infty }(X))$ such that
$$(h^{-1}(0,y,z+\mu ,0)+g(y,z+\mu ))(h(0,y,z,0)+f_{00}(y,z))=1.$$
Let us write again $h_\mu :=h^{(-1)}(0,y,z+\mu ,0),g_\mu :=g(0,z+\mu ),h:=h(0,y,z,0)$ and make the ansatz $(h_\mu ^{(-1)}+g_\mu)(h+f_{00})=1$ to determine $g_\mu.$ Applying Theorem \ref{11.T.minverse} we see that ($y$-wise) $(h+f_{00})^{-1}=h_\mu ^{(-1)}+r$ for some $r\in M_R^{-\infty }(X).$ This gives us $h_\mu ^{(-1)}+g_\mu =h_\mu ^{(-1)}+r$ and hence $r=g_\mu.$ Finally if $A^{(-1)}$ is associated with $A_0$ as in Proposition \ref{23.P.pell} we find a $B\in L^{-\mu }_{M+G}(M,{\bf{g}}^{-1})$ such that $Q:=A^{(-1)}+B$ has the properties \eqref{23.E.p9}, first from the left, and then, analogously, from the right. It follow a refined $Q$ as in Corollary \ref{23.C.pit} such that the relations \eqref{23.E.p9Q} hold. Then we proceed as in the first part of the proof and obtain a two-sided parametrix $P$ of \eqref{23.E.ensm}.
\end{proof}
\begin{rem}\label{23.R.spa}
With $P\in L^{-\mu }(M,{\bf{g}}^{-1})$ also $P+C$ for arbitrary $C\in L^{-\infty }(M,{\bf{g}}^{-1})$ is a parametrix of $A\in L^\mu (M,{\bf{g}});$ moreover, if $P,Q\in L^{-\mu }(M,{\bf{g}}^{-1})$ are parametrices of $A,$ then $P-Q\in L^{-\infty }(M,{\bf{g}}^{-1}).$
\end{rem}
\begin{thm}\label{23.T.as}
Let $A\in L^\mu (M,{\bf{g}})$ be as in Theorem \textup{\ref{23.T.param}}. Then $Au=f$ for $u\in H^{-\infty ,\gamma }(M)$ and  $f\in H^{s-\mu  ,\gamma -\mu }(M),s\in \R,$ entails $u\in H^{s ,\gamma }(M).$ If in addition $f\in H_S^{s-\mu  ,\gamma -\mu }(M)$ for an asymptotic type $S$ associated with $(\gamma -\mu ,\Theta )$ it follows that $u\in H_R^{s ,\gamma }(M)$ for a resulting asymptotic type $R$ associated with $(\gamma ,\Theta ).$
\end{thm}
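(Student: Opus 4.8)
The plan is to mimic exactly the argument of Theorem \ref{13.T.as} in the conical case, now using the edge parametrix constructed in Theorem \ref{23.T.param}. By Theorem \ref{23.T.param} there is a parametrix $P\in L^{-\mu }(M,{\bf{g}}^{-1})$ with $PA=1+C_l$ for some $C_l\in L^{-\infty }(M,{\bf{g}}_l)$ and $AP=1+C_r$ for some $C_r\in L^{-\infty }(M,{\bf{g}}_r)$, where ${\bf{g}}^{-1}=(\gamma -\mu ,\gamma ,\Theta )$, ${\bf{g}}_l=(\gamma ,\gamma ,\Theta )$, ${\bf{g}}_r=(\gamma -\mu ,\gamma -\mu ,\Theta )$. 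Applying $P$ to the equation $Au=f$ gives $PAu=(1+C_l)u=Pf$, hence $u=Pf-C_lu$. It then suffices to invoke the mapping properties of the edge calculus.

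\textbf{Regularity.} For the first assertion, note that $f\in H^{s-\mu ,\gamma -\mu }(M)$ and $P\in L^{-\mu }(M,{\bf{g}}^{-1})$, so by the continuity statement (the edge analogue of \eqref{11.E.conta}, cf. Remark \ref{22.R.cont} with $m=-\mu$ and the weight data ${\bf{g}}^{-1}$) we get $Pf\in H^{s,\gamma }(M)$. On the other hand $C_l\in L^{-\infty }(M,{\bf{g}}_l)$, so by \eqref{21.E.glsmabb} it maps $H^{-\infty ,\gamma }(M)$ into $H^{\infty ,\gamma }(M)$; in particular $C_lu\in H^{s,\gamma }(M)$ since $u\in H^{-\infty ,\gamma }(M)$ by hypothesis. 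Therefore $u=Pf-C_lu\in H^{s,\gamma }(M)$, which is the non-asymptotic part.

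\textbf{Asymptotics.} For the second assertion, now assume in addition $f\in H^{s-\mu ,\gamma -\mu }_S(M)$ for an asymptotic type $S$ associated with $(\gamma -\mu ,\Theta )$. Again by the mapping property \eqref{22.E.contas}, the operator $P\in L^{-\mu }(M,{\bf{g}}^{-1})$ sends $H^{s-\mu ,\gamma -\mu }_S(M)$ into $H^{s,\gamma }_{R_1}(M)$ for some resulting asymptotic type $R_1$ associated with $(\gamma ,\Theta )$. For the term $C_lu$: we have already established $u\in H^{s,\gamma }(M)$, and by the smoothing property \eqref{21.E.glsmabb} the operator $C_l\in L^{-\infty }(M,{\bf{g}}_l)$ maps $u$ into $H^{\infty ,\gamma }_{R_2}(M)$ for a fixed $C_l$-dependent asymptotic type $R_2$ associated with $(\gamma ,\Theta )$. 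Summing, $u=Pf-C_lu\in H^{s,\gamma }_{R_1}(M)+H^{\infty ,\gamma }_{R_2}(M)\subset H^{s,\gamma }_{R}(M)$ where $R$ is obtained by taking (the nonlinear superposition / union of) the data of $R_1$ and $R_2$, which is again an asymptotic type associated with $(\gamma ,\Theta )$.

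\textbf{Main obstacle.} The construction is entirely formal once the mapping properties are in hand; the only delicate point is ensuring that the asymptotic type $R$ of the output is well defined, i.e. that the edge-calculus continuity results \eqref{22.E.contas} and the smoothing mapping \eqref{21.E.glsmabb} genuinely produce discrete asymptotics with a controlled type. This in turn rests on the hypotheses of Theorem \ref{23.T.param} — in particular that the Mellin asymptotic type $R(y)$ of Remark \ref{23.R.pellco} is independent of $y$, which is what makes the subordinate conormal symbol inversion yield a genuine parametrix inside the edge algebra rather than merely a $\sigma_0$-parametrix. Apart from invoking that theorem, no further work is needed.
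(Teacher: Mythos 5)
Your proposal is correct and follows exactly the paper's own argument: apply the parametrix $P$ from Theorem \ref{23.T.param} to get $u=Pf-Cu$ and then invoke the continuity properties \eqref{22.E.contas} together with the smoothing mapping \eqref{21.E.glsmabb}. The paper's proof is just a terser version of what you wrote, so nothing further is needed.
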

\begin{proof}
Let $P\in L^{-\mu }(M,{\bf{g}}^{-1})$ a parametrix of $A.$ Then $Au=f$ implies $PAu=(1+C)u=Pf,$ i.e. $u=Pf-Cu,$ and it suffices to apply \eqref{22.E.contas}.
\end{proof}
\begin{rem}\label{23.R.eig}
The resulting asymptotic type in Theorem \ref{23.T.as} is not affected by the choice of the parametrix $P.$ In fact, if $P+D,D\in L^{-\infty }(M,{\bf{g}}^{-1}),$ is another parametrix of $A,$ cf. Remark \ref{23.R.spa}, then $(P+D)Au=(1+C)u+DAu=(P+D)f$ gives us $u=(P+D)f-DAu-Cu=Pf-Cu.$\\
\end{rem}

\nt Let us now turn to asymptotic parametrices. The general idea is similar as in the conical case. Let $A\in L^\mu (M,{\bf{g}})$ be as in Theorem \ref{23.T.param}, and let $P\in L^{-\mu }(M,{\bf{g}}^{-1})$ be a parameterix. Then applying Proposition \ref{21.P.cdffalg} to $P$ we obtain an expansion
\beq\label{23.E.aspa}
P=\sum_{m=0}^N{\bf{r}}^mP_m  \,\,\,\textup{mod}\,L_G^{-\mu }(M,{\bf{g}}^{-1})
\eeq
for every $N\in \N.$ Let us call an expansion \eqref{23.E.aspa} an asymptotic parametrix of $A.$ Writing $A$ itself in the form \eqref{21.E.cdev} the operator $A_0$ is elliptic in $L^\mu (M,{\bf{g}}),$ and $P_0\in L^{-\mu }(M,{\bf{g}}^{-1})$ in \eqref{23.E.aspa} can be taken as the parametrix of $A_0,$  obtained by Theorem \ref{23.T.param}. The construction of $P_0$ is easier than that of $P$ itself because $P_0$ concerns the case of constant coefficients with respect to $r.$ Now the idea of asymptotic parametrices is to obtain \eqref{23.E.aspa} by an iterative process, starting with the ansatz
\beq\label{23.E.aspit}
\big(\sum_{m=0}^N{\bf{r}}^mP_m \big)\big(\sum_{i=0}^N{\bf{r}}^iA_i \big)\sim 1 ,\,\big(\sum_{i=0}^N{\bf{r}}^iA_i \big)\big(\sum_{m=0}^N{\bf{r}}^mP_m \big)\sim 1 ;
\eeq
$\sim $ means equality modulo a Green plus flat remainder (the remainders will be characterised below). Let us discuss the first relation of \eqref{23.E.aspit}; the second one is similar. The operator $P_0$ is already the parametrix of $A_0.$ In the construction we first replace ${\bf{r}}^mP_m$ by $\boldsymbol{P_m},$ where $\boldsymbol{P_0}:=P_0.$ For $\boldsymbol{P_1}$ we set
\beq\label{23.E.asp1}
\boldsymbol{P_1}:=-P_0{\bf{r}}A_1P_0.
\eeq
This operator satisfies the relation
\beq\label{23.E.asp11}
\boldsymbol{P_1}A_0+P_0{\bf{r}}A_1=-P_0{\bf{r}}A_1C_l\in L^{-\infty }(M,{\bf{g}}_l)
\eeq
where $C_l:=P_0A_0-1\in L^{-\infty }(M,{\bf{g}}_l).$ In general, for $j>0,$ motivated by the desirable identity
$\sum_{m+i=j}\boldsymbol{P_m}{\bf{r}}^iA_i=0,\,\,\,\mbox{or}\,\,\,\boldsymbol{P_j}A_0=-\sum_{m+i=j,m<j}{\bf{r}}^mP_m{\bf{r}}^iA_i,$ we set
\beq\label{23.E.asp1mj}
\boldsymbol{P_j}:=-\big(\sum_{m+i=j,m<j}\boldsymbol{P_m}{\bf{r}}^iA_i\big)P_0,
\eeq
and obtain
\beq\label{23.E.asp1mj3}
\sum_{m+i=j}\boldsymbol{P_m}{\bf{r}}^iA_i=-\big(\sum_{m+i=j,m<j}\boldsymbol{P_m}{\bf{r}}^iA_i\big)C_l\in L^{-\infty }(M,{\bf{g}}_l)
\eeq 
for any $j.$ Also in the edge case the operators \eqref{23.E.asp1mj} do not automatically coincide with those in \eqref{23.E.aspa}.The operators \eqref{23.E.asp1mj} can be expressed in terms of $P_0$ and ${\bf{r}}^iA_i,$ for $i=1,\dots,j,$ alone. For instance, we have anlogues of the formulas \eqref{13.E.aspber2}, \eqref{13.E.aspber3}, 
etc. Also the other elements of the asymptotic parametrix construction of Section 1.3 can be carried out for the edge case. In particular we obtain the following result.
\begin{thm}\label{23.P.as}
Let $A\in L^\mu (M,{\bf{g}})$ be an elliptic operator. Then for any fixed $N\in \N,N\geq 1,$ the asymptotic parametrix $P_{\textup{as}}$ can be chosen in such a way that 
\beq\label{23.E.copas}
P_{\textup{as}}A=1+C+F
\eeq
for an $F\in L^\mu (M,{\bf{g}}_l)$ which is flat of order $N-1 $ and a $C\in L^{\infty }(M,{\bf{g}}_l).$
\end{thm}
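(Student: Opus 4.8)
The plan is to mirror, essentially verbatim, the argument of Theorem \ref{13.P.as} from the conical case, since Proposition \ref{21.P.cdffalg} (the edge analogue of Proposition \ref{13.P.dev}) and the composition and adjoint results, Theorems \ref{22.T.comp} and \ref{22.T.ad}, together with the weight-function commutation properties, provide exactly the tools that were used there. First I would dispose of the case where $A$ is an edge-degenerate differential operator, as in Example \ref{22.E.dec}: there the commutation of a power ${\bf{r}}^i$ through a differential operator $A_m$ is exact, $A_m{\bf{r}}^i={\bf{r}}^iA_{m,i}$, while the commutation of ${\bf{r}}^i$ through the parametrix $P_0$ of $A_0$ costs only a finite-rank Green remainder, by the edge analogue of Proposition \ref{13.P.dmelcom} (the smoothing Mellin part of $P_0$ satisfies $\pi_\C R_0\cap\Gamma_{(n+1)/2-(\gamma+i)}=\emptyset$ for the relevant $i$, or one absorbs an $\varepsilon$-loss as in \eqref{13.E.commexje}).

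Next I would set up the iteration exactly as in \eqref{23.E.asp1}--\eqref{23.E.asp1mj3}, defining $\boldsymbol{P_j}$ recursively so that $\sum_{m+i=j}\boldsymbol{P_m}{\bf{r}}^iA_i\in L^{-\infty}(M,{\bf{g}}_l)$ for each $j$. Summing over $j=0,\dots,N$ gives $\sum_{j=0}^N\sum_{m+i=j}\boldsymbol{P_m}{\bf{r}}^iA_i=1+G$ with $G\in L^{\infty}(M,{\bf{g}}_l)$, the edge analogue of \eqref{13.E.copassm}. Then, commuting all the ${\bf{r}}$-powers in each $\boldsymbol{P_m}$ to the left (at the cost of finite-rank Green terms, using \eqref{23.E.comm}-type relations and Remark \ref{21.R.mu}), I obtain $\boldsymbol{P_m}={\bf{r}}^mP_m+D_m$ with $P_m\in L^{-\mu}(M,{\bf{g}}^{-1})$ and $D_m\in L_G^{-\mu}(M,{\bf{g}}^{-1})$, so that $P_{\textup{as}}:=\sum_{m=0}^N{\bf{r}}^mP_m$ (or its $\varepsilon$-modified version \eqref{13.E.commaspaep}) serves as the asymptotic parametrix. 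Forming $P_{\textup{as}}A$ and comparing with $1+G$, the discrepancy is a sum of terms ${\bf{r}}^mP_m{\bf{r}}^iA_i$ with $m+i>N$ plus Green contributions; each such term carries $m+i\ge N+1$ factors of ${\bf{r}}$, which after commuting to the left produce a left factor ${\bf{r}}^N$, hence a flat operator of order $N$. Settling for the weaker exponent $N-1$ (harmless since $N$ is arbitrary) covers also the $\varepsilon$-loss case, yielding \eqref{23.E.copas}.

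For the general pseudo-differential $A\in L^\mu(M,{\bf{g}})$ — including a non-trivial smoothing Mellin plus Green summand — the only change, as in the proof of Theorem \ref{13.P.as}, is that the exact commutation $A_m{\bf{r}}^i={\bf{r}}^iA_{m,i}$ is replaced by $A_m{\bf{r}}^i={\bf{r}}^{i-\varepsilon}A_{m,i,\varepsilon}+C_{i,\varepsilon}$ for arbitrarily small $\varepsilon>0$ and a finite-rank Green operator $C_{i,\varepsilon}$; this follows from Proposition \ref{13.P.dmelcom} applied to the local smoothing Mellin symbols of $A$ together with Taylor expansion of the holomorphic Mellin symbol $h$ (cf. Proposition \ref{21.P.cdffalg}). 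The bookkeeping of the $\varepsilon$-losses is exactly as before and is absorbed into the passage from exponent $N$ to $N-1$. Throughout, I rely on Remark \ref{23.R.spa} (any parametrix differs from $P$ by an element of $L^{-\infty}$) to start the iteration from a genuine parametrix $P$ of $A$ and a parametrix $P_0$ of $A_0$ as furnished by Theorem \ref{23.T.param}.

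The main obstacle I anticipate is organizing the commutation of weight functions cleanly in the edge setting: unlike differential operators, a general $\Op_y(a_j)$ does not commute with ${\bf{r}}^\beta$ on the nose, and the relevant statement (the edge version of Propositions \ref{13.P.devcom} and \ref{13.P.dmelcom}) must be invoked — or established — for the full amplitude-function class $R^\mu(\Omega\times\R^q,{\bf{g}})$, tracking that the $\varepsilon$-loss and finite-rank Green remainders propagate correctly through the multi-term composition $\boldsymbol{P_m}{\bf{r}}^iA_i$. Once that commutation lemma is in hand, the rest is a formal rerun of Section 1.3, so I would state the needed edge commutation relations explicitly (as $A_m{\bf{r}}^i-{\bf{r}}^{i-\varepsilon}A_{m,i,\varepsilon}=C_{i,\varepsilon}$ and $P_0{\bf{r}}^i-{\bf{r}}^{i-\varepsilon}P_{0,i,\varepsilon}=G_{i,\varepsilon}$) at the outset and then point to the proof of Theorem \ref{13.P.as} for the algebraic manipulation.
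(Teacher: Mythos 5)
Your proposal follows essentially the same route as the paper, which itself proves this theorem only by asserting that the iterative construction of Section 1.3 (Theorem \ref{13.P.as}) carries over to the edge case, with the recursion \eqref{23.E.asp1}--\eqref{23.E.asp1mj3} already set up beforehand. Your explicit identification of the needed edge commutation relations for ${\bf{r}}^\beta$ through amplitude functions in $R^\mu(\Omega\times\R^q,{\bf{g}})$ (the edge analogues of Propositions \ref{13.P.devcom} and \ref{13.P.dmelcom}) is exactly the point the paper leaves implicit, and your handling of the $\varepsilon$-loss via the exponent $N-1$ matches the paper's argument.
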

\begin{rem}
We have a staightforward analogue of Corollary \ref{13.C.as} in the case of a manifold $M$ with edge.
\end{rem}
\nt Some elements of the parametrix construction can be organised in an alternative way when $A$ is an edge-degenerate differential operator. In this case we have locally near the edge $A=r^{-\mu }\Op_y\op_M^{\gamma -n/2}(h)$ for a Mellin symbol $h(r,y,z,\eta )=\sum_{j+|\alpha |\leq \mu }a_{j\alpha }(r,y)z^j(r\eta )^\alpha ,a_{j\alpha }(r,y)\in C^\infty (\overline{\R}_+,\textup{Diff}^{\mu -(j+|\alpha|) }(X)),$ cf. the formula \eqref{iedeg}. Let us consider
\beq\label{23.E.asp1mj4}
A_0=r^{-\mu }\Op_y\op_M^{\gamma -n/2}(h_0)\,\,\,\mbox{for}\,\,\,h_0(r,y,z,\eta )=\sum_{j+|\alpha |\leq \mu }a_{j\alpha }(0,y)z^j(r\eta )^\alpha, 
\eeq
and construct $P_0$ under the assumption of the ellipticity of $A_0.$ The other summands of \eqref{23.E.aspa} can be obtained by the above-mentioned iterative process.
We employ the identity
\beq\label{23.E.amelid}
r^{-\mu }\op_M^{\gamma -n/2}(h_0)r^\mu =\op_M^{\gamma -n/2-\mu }(T^{-\mu }h_0) 
\eeq
and compute a Mellin symbol $m^{(-1)}$ by the ansatz
\beq\label{23.E.ameLe}
\begin{split}
\big(\Op_yr^{-\mu }&\op_M^{\gamma -n/2}(h_0)\big)\big(\Op_yr^\mu \op_M^{\gamma -n/2-\mu }(m^{(-1)})\big)\\
&=\big(\Op_y\op_M^{\gamma -n/2-\mu }(m)\big)\big(\Op_y \op_M^{\gamma -n/2-\mu }(m^{(-1)})\big)\sim 1.
\end{split} 
\eeq
for $m(r,y,z,\eta ):=\tilde{m}(y,z,r\eta ),\tilde{m}(y,z,\tilde{\eta }):=(T^{-\mu }\tilde{h}_0)(y,z,\tilde{\eta })$ where $h_0(r,y,z,\eta )=\tilde{h}_0(y,z,r\eta ).$ The meaning of $\sim$ will be specified below.  \\

\nt The ellipticity of $A_0$ has the consequence that $\tilde{m}(y,z,\tilde{\eta })\in C^\infty (\Omega ,L^\mu _{\textup{cl}}(X;\Gamma _\beta \times \R^q_{\tilde{\eta }}))$ is parameter-dependent elliptic, for every $\beta \in \R.$ Choose a parameter-dependent parametrix $\tilde{f}(y,z,\tilde{\eta })\in C^\infty (\Omega ,L^{-\mu }_{\textup{cl}}(X;\Gamma _\beta \times \R^q_{\tilde{\eta }}))$ for fixed $\beta $ (a simple consideration shows that this can be done including smooth dependence in $y\in \Omega $.) Applying the kernel cut-off operator to $\tilde{f}(y,z,\tilde{\eta })$ gives us a $\tilde{k}_0(y,z,\tilde{\eta })\in C_0^{\infty }(\Omega ,M^{-\mu }_{\mathcal{O}}(X;\R^q_{\tilde{\eta }}))$ such that also $\tilde{k}_0(y,z,\tilde{\eta })$ is a parameter-dependent parametrix for $(z,\tilde{\eta })\in \Gamma _\beta \times \R^q.$ Then, since $\tilde{m}(y,z,\tilde{\eta })\tilde{k}_0(y,z,\tilde{\eta }) $ is equal to $1$ for all $(z,\tilde{\eta })\in \Gamma _\beta \times \R^q$ modulo $C^\infty (\Omega ,L^{-\infty  }(X;\Gamma _\beta \times \R^q_{\tilde{\eta }}))$ it follows also a similar eqivalence for all $(z,\tilde{\eta })\in \Gamma _\delta \times \R^q$ for every real $\delta .$ Let us now find a Mellin symbol as an asymptotic sum $\tilde{m}^{(-1)}(r,y,z,\tilde{\eta })\sim \sum_{\kappa =0}^\infty \tilde{k}_\kappa (r,y,z,\tilde{\eta })$ in $C^\infty (\overline{\R}_+\times \Omega ,M^{-\mu }_{\mathcal{O}}(X;\R^q_{\tilde{\eta }}))$ with $k_\kappa (r,y,z,\newline\eta ) =\tilde{k}_\kappa (r,y,z,r\eta ),$ by successively solving the equations
\beq\label{23.E.aspk}
\sum_{|(\alpha _0,\alpha ')|+\kappa =l}1/\alpha _0!\alpha '!\big(\partial _z^{\alpha _0}\partial _\eta ^{\alpha '}m\big)\big((-r\partial _r)^{\alpha _0}D_y^{\alpha '}k_\kappa \big)=0 \,\,\,\mbox{for all} \,\,\,l=1,2,\dots.
\eeq
$\tilde{k}_0(y,z,\tilde{\eta })$ is already determined. For $l=1$ we obtain
\beq\label{23.E.aspksucc}
k_1=-k_0\sum_{|\alpha _0,\alpha '|=1}\big(\partial _z^{\alpha _0}\partial _\eta ^{\alpha '}m\big)\big((-r\partial _r)^{\alpha _0}D_y^{\alpha '}k_0 \big)
\eeq
which is of the form $k_1(r,y,z,\eta )=\tilde{k}_1 (r,y,z,r\eta )$ for a $\tilde{k}_1 (r,y,z,\tilde{\eta })\in C^\infty (\overline{\R}_+\times \Omega ,M^{-\mu -1}_{\mathcal{O}}(X;\R^q_{\tilde{\eta }})).$ More generally, the solution of \eqref{23.E.aspk} for arbitrary $l\geq 1$ has form $k_l(r,y,z,\eta )=\tilde{k}_l (r,y,z,r\eta )$ for a $\tilde{k}_l (r,y,z,\tilde{\eta })\in C^\infty (\overline{\R}_+\times \Omega ,M^{-\mu -l}_{\mathcal{O}}(X;\R^q_{\tilde{\eta }})).$ In other words the above-mentioned asymptotic sum for $\tilde{k}(r,y,z,\tilde{\eta })$ can be carried out in the desired form. Note that, although in the case of a differential operator this process is not finite, we always have $\partial _z^{\alpha _0}\partial _\eta ^{\alpha '}m=0$ as soon as $|\alpha _0,\alpha '|>\mu ,$ i.e. in \eqref{23.E.aspk} it suffices to take the sum over all $|\alpha _0,\alpha '|\leq \mu .$ Clearly the indicated method of constructing a parametrix in the edge calculus also works in the pseudo-differential case. What we did so far is a new computation of the non-smoothing holomorphic Mellin symbol for the parametrix, here carried out for $A_0.$ As we see, this Mellin symbol is not necessarily constant in $r$ near $0.$ However, in the constructed asymptotic sum for $\tilde{m}^{(-1)}$ we can easily split up the $r$-independent contributions. The remaining part of the parametrix construction is similar as before.

\subsection{Tools from the singular analysis} 
We first recall the definition of the weighted Kegel-spaces $\Kcal^{s,\gamma }(X^\wedge)$ which is most simple in the case of the unit sphere $S^n$ in $\R^{n+1}.$ In Section 1.1
 we defined the weighted spaces $\Hcal^{s,\gamma }(X^\wedge).$ For any fixed cut-off function $\omega $ on the half-axis we set
 $$\Kcal^{s,\gamma }((S^n)^\wedge):=\{\omega u+(1-\omega )v:u\in \Hcal^{s,\gamma }((S^n)^\wedge),v\in H^s(\R^{n+1})\}$$
 for $s,\gamma \in \R.$ The definition is independent of the choice of $\omega .$ The definition for general $X$ refers to a localisation on $U^\wedge=\R_+\times U$ for any coordinate neighbourhood $U$ on $X.$ Let us fix a diffeomorphism
 $$\chi :\R_+\times U\rightarrow \Gamma $$
 for a conical set $\Gamma \subset \R^{n+1}\setminus \{0\}$ such that $\chi (r,x)=r\chi (1,x),r>0,$ for a diffeomorphism $\chi (1,\cdot):U\rightarrow \Gamma \cap S^n.$ Then $\Kcal^{s,\gamma }(X^\wedge)$ is defined to be the set of all $u\in H^s_{\textup{loc}}(X^\wedge)$ such that $\varphi u\circ \chi ^{-1}\in \Kcal^{s,\gamma }((S^n)^\wedge)$ for every such $\chi $ and every $\varphi \in C_0^\infty (U).$\\
 
 \nt From the definition it follows that
 $$\Kcal^{0,0 }(X^\wedge)=\Hcal^{0,0}(X^\wedge)=r^{-n/2}L^2(\R_+\times X).$$
 In $\Kcal^{s,\gamma }(X^\wedge)$ we can easily introduce a scalar product that turns the space to a Hilbert space. In $\Kcal^{s,\gamma }(X^\wedge)$ we consider the group action $\kappa =\{\kappa_\lambda \}_{\lambda \in \R_+} $ defined as
 $$\kappa_\lambda:u(r,x)\rightarrow \lambda ^{n/2}u(\lambda r,x),\,\,\lambda \in \R_+,$$
 cf. the notation in connection with \eqref{21.E.sespac}. Also the spaces $\Kcal^{s,\gamma ;e}(X^\wedge):=\langle r\rangle ^{-e}\Kcal^{s,\gamma }\newline (X^\wedge),s,\gamma ,e\in \R,$ and the Fr\'echet subspaces with asymptotics $\Kcal_P^{s,\gamma ;e}(X^\wedge)$ admit the action of $\kappa .$\\
 
 \nt Let us now recall the definition of operator-valued symbols in the set-up of group actions in the involved spaces. Let $H$ and $\tilde{H}$ be Hilbert spaces with group actions $\kappa $ and $\tilde{\kappa}, $ respectively. Then 
\beq\label{24.E.s}
S^\mu (U\times \R^q;H,\tilde{H})\,\,\,\mbox{for}\,\,\,\mu \in \R\,\,\,\mbox{and open}\,\,\,U\subseteq \R^p
\eeq 
is defined to be the set of all $a(y,\eta )\in C^\infty (U\times \R^q,\mathcal{L}(H,\tilde{H}))$ satisfying the symbolic estimates
\beq\label{24.E.se}
\| \tilde{\kappa }^{-1}_{\langle\eta \rangle}\{D_y^\alpha D_\eta ^\beta a(y,\eta )\}\kappa _{\langle\eta \rangle}\|_{ \mathcal{L}(H,\tilde{H})}\leq c\,\langle\eta \rangle^{\mu -|\beta |}
\eeq 
for all $(y,\eta )\in K\times \R^q$ for any $K\subset \subset U$ and all multi-indices $\alpha \in \N^p,\beta \in \N^q,$ for constants $c=c(\alpha ,\beta ,K)>0.$ The elements of \eqref{24.E.s} are referred to as operator-valued symbols (in the frame of twisted homogeneity). By twisted homogeneity of order $\mu $ of a function $a_{(\mu )}(y,\eta )\in C^\infty (U\times (\R^q\setminus \{0\}),\mathcal{L}(H,\tilde{H}))$ we understand the relation
\beq\label{24.E.ho}
a_{(\mu )}(y,\lambda \eta )=\lambda ^\mu \tilde{\kappa }_\lambda a_{(\mu )}(y,\eta )\kappa _\lambda ^{-1}\,\,\,\mbox{for all}\,\,\,\lambda \in \R_+.
\eeq 
An $a_\mu (y,\eta )\in C^\infty (U\times \R^q,\mathcal{L}(H,\tilde{H}))$ is said to be twisted homogeneous of order $\mu $ for large $|\eta |$ if there is a constant $C>0$ such that the relation $a_\mu (y,\lambda \eta )=\lambda ^\mu \tilde{\kappa }_\lambda a_\mu (y,\eta )\kappa _\lambda ^{-1}$ holds for all $|\eta |\geq C $ and $\lambda \geq 1.$ We often employed the fact that any such $a_\mu (y,\eta )$ belongs to the space \eqref{24.E.s}. By 
\beq\label{24.E.scl}
S_{\textup{cl}}^\mu (U\times \R^q;H,\tilde{H})
\eeq  
we denote the subset of all (classical) elements $a(y,\eta )$ of \eqref{24.E.s} such that there are functions $a_{\mu -j}(y,\eta )\in C^\infty (U\times \R^q,\mathcal{L}(H,\tilde{H})),$ twisted homogeneous of order $\mu -j$ for large $|\eta |,$ such that $a(y,\eta )-\sum_{j=0}^Na_{\mu -j}(y,\eta )\in S^{\mu-(N+1)} (U\times \R^q;H,\tilde{H})$ for every $N\in \N.$ \\

\nt These notions have a straighforward generalisation to the case of Fr\'echet spaces $E$ and/or $\tilde{E}$ with group action $\kappa $ and $\tilde{\kappa},$ respectively. In particular, this is applied in Definition \ref{21.D.seq4}.


\end{document}